\theoremstyle{plain}
\newtheorem{thm}{Theorem}[section]
\newtheorem{lem}[thm]{Lemma}
\newtheorem{prop}[thm]{Proposition}
\newtheorem{defn}[thm]{Definition}
\newtheorem{fact}[thm]{Fact}
\newtheorem{example}[thm]{Example}
\newtheorem{rem}[thm]{Remark}
\newtheorem{obs}[thm]{Observation}
\newcommand{\End}{\mathop{\mathrm{End}}\nolimits}
\newcommand{\diag}{\mathop{\mathrm{diag}}\nolimits}
\newcommand{\len}{\mathop{\mathrm{len}}\nolimits}
\newcommand{\qbinom}[2]{\genfrac{[}{]}{0pt}{}{#1}{#2}} 
\def\set#1#2{\{#1\,|\,#2\}}
\newcommand{\C}{\mathbb{C}}
\newcommand{\R}{\mathbb{R}}
\newcommand{\Z}{\mathbb{Z}}
\newcommand{\Partition}[1]{\mathcal{P}_{#1}}
\newcommand{\G}[2]{\mathcal{G}^\mathbb{C}_{#1,#2}}
\newcommand{\Range}[2]{\mathrm{Range}(\G{m}{n})}
\newcommand{\Ades}{\mathcal{E}}
\newcommand{\Adesh}{\mathcal{F}}
\newcommand{\inv}{{}^{-1}}
\newcommand{\fre}{\mathfrak{e}}
\newcommand{\frh}{\mathfrak{h}}
\newcommand{\vspan}{\text{-}\mathrm{span}}
\begin{document}

\title{Great antipodal sets of complex Grassmannian manifolds as designs with the smallest cardinalities}
\author{Hirotake Kurihara, Takayuki Okuda}
\date{}

\maketitle

\renewcommand{\thefootnote}{\fnsymbol{footnote}}
\footnote[0]{2010 Mathematics Subject Classification:
05B30 (Primary), 53C35 (Secondary).}

\begin{abstract}
The aim of this paper is a characterization of great antipodal sets of complex Grassmannian manifolds as certain designs with the smallest cardinalities.
\end{abstract}

\textbf{Key words}: complex Grassmannian manifold;
antipodal set; great antipodal set; symmetric space; design.

\section{Introduction}
\label{sec:intro}

The aim of this paper is a characterization of great antipodal sets of 
complex Grassmannian manifolds as certain designs with the smallest cardinalities in Theorem~\ref{Mainthm:great_anti_to_EF_design}.
Note that great antipodal sets are researched in the area of differential geometry.
On the other hand, the theory of designs is related to algebraic combinatorics or representation theory.

In 1973, Delsarte~\cite{Delsarte1973aat}
unified the theories of codes and designs on association schemes,
and
gave the upper bounds for codes and 
the lower bound for designs by applying linear programming
for polynomials associated with metric or cometric association schemes.
After his work, 
the theory of spherical designs was introduced by Delsarte--Goethals--Seidel~\cite{Delsarte1977sca}
as an analogy of Delsarte technique.
The essential tool in their works is the addition formula
for polynomials;
polynomials associated with metric or cometric association schemes,
or the Gegenbauer polynomials with spheres.

In general, the theory of designs can be given on the Delsarte
spaces (See Neumaier~\cite{Neumaier1980dgd}, Godsil\cite{Godsil1993ac})
or the polynomial spaces (See Levenshtein~\cite{Levenshtein1992dam,
 Levenshtein1998oti, Levenshtein1998ubf}),
which are metric spaces with ``good'' polynomials, such as the polynomials associated with metric or cometric association schemes,
or the Gegenbauer polynomials. 
For instance, compact symmetric spaces of rank one are natural and significant examples of the Delsarte spaces or the polynomial spaces for continuous metric spaces.
Note that spheres and projective spaces are compact symmetric spaces of rank one.
The theory of designs on compact symmetric spaces of rank one
was studied in details by Hoggar~\cite{Hoggar1982tip}.
Moreover, the classification problems of ``tight designs'',
which is a design whose cardinality is equal to the known natural lower bound, were developed by Bannai--Hoggar~\cite{Bannai1985ott,Bannai1989tta},
Hoggar~\cite{Hoggar1984pot, Hoggar1989tad}
and Lyubich~\cite{Lyubich2009otp}, and others.

Unfortunately, it is not easy to consider 
similar theories in the framework of compact symmetric spaces of higher rank.
Nevertheless, there are some developments in the theory of designs on Grassmannian manifolds, which are natural examples of compact symmetric spaces of higher rank
(See Bachoc--Coulangeon--Nebe~\cite{Bachoc2002dig},
Bachoc--Bannai--Coulangeon~\cite{Bachoc2004cad}
and Roy~\cite{Roy2009bca}).
We refer the readers to the survey by Bannai-Bannai~\cite{Bannai2009aso}
for more information of the history of generalizations of the theory of designs.

In this paper,
our interested is to give a generalization of the following well known fact:
a pair of antipodal points of a sphere,
which is purely differential geometric notion,
can be characterized by a tight $1$-design,
which is purely algebraic combinatorial notion.

The concept of the antipodal points in spheres is generalized to antipodal sets in symmetric spaces by Chen--Nagano~\cite{Chen1988arg}.
An antipodal set whose cardinality is maximal in the set of antipodal sets is called a great antipodal set. 
Since it is known that any antipodal set is finite, 
a great antipodal set is a finite set.
If our space is a symmetric $R$-space,
then any two great antipodal sets are congruent.
Thus, a great antipodal set is unique in the sense of congruent
(See Tanaka--Tasaki~\cite{Tanaka2013aos}).

Complex Grassmannian manifolds are important examples of symmetric $R$-spaces.
Here the complex Grassmannian manifold $\G{m}{n}$ is defined to be
the set of $m$-dimensional subspaces in the $n$-dimensional complex vector space $\C^n$.
A great antipodal set $S$ on $\G{m}{n}$ consists of $\binom{n}{m}$ points.
$S$ is unique up to the action of the unitary group $U(n)$.
In this paper,
we are interested in the following question:
\begin{description}
\item[(Q)]
Great antipodal sets of the complex Grassmannian manifolds
can be considered as ``good'' designs?
\end{description}

The concept of designs on $\G{m}{n}$ was introduced by Roy in 2009.
In this paper, to capture the feature of great antipodal sets
in terms of designs,
we modify Roy's definition of designs on $\G{m}{n}$
to more suitable one by using indexes of irreducible
representations of unitary groups in Section~\ref{sec:design}.
To be more detail, we define a $\mathcal{T}$-design
on $\G{m}{n}$ for a subset $\mathcal{T}$ of
\[
\Partition{m} := \{\, (\mu_1,\dots,\mu_m) \in \Z^m \mid \mu_1 \geq \dots \geq \mu_m \geq 0 \,\}.
\]

As a main result of this paper, 
we show that a great antipodal set is a $\Ades \cup \Adesh$-design, where
\[
\Ades:=
\set{(\underbrace{1,1,\ldots ,1}_{i},\underbrace{0,0,\ldots ,0}_{m-i})}
{i=0,1,\ldots ,m},
\]
\[
\Adesh:=
\set{(2,\underbrace{1,1,\ldots ,1}_{i-1},\underbrace{0,0,\ldots ,0}_{m-i})}
{i=2,\ldots ,m}\subset \Partition{m}.
\]
We also give the lower bound for the cardinality of a $\Ades$-design 
$X$ as follows:
\[
|X| \ge \binom{n}{m}
\]
(See Theorem~\ref{Mainthm:great_anti_to_E_design}).
In particular, any $\Ades \cup \Adesh$-design also satisfies the inequality above.
Remark that this lower bounds is attained by a great antipodal set.
In addition, we show that a $\Ades \cup \Adesh$-design $X$
satisfying $|X|=\binom{n}{m}$
must be a great antipodal set.
In other words, the property of $\Ades \cup \Adesh$-designs
with the smallest cardinalities
gives a characterization of great antipodal sets (See Theorem~\ref{Mainthm:great_anti_to_EF_design}).

The outline of this paper is as follows. In Section \ref{sec:Preliminary}, we recall the definition and some
properties of great antipodal sets of the complex Grassmannian manifolds.
In Section \ref{sec:design}, we give a definition of designs on
the complex Grassmannian manifolds and describe our main results.
In Section \ref{sec:zonal_poly},
in order to prove Theorem~\ref{Mainthm:great_anti_to_E_design}
and \ref{Mainthm:great_anti_to_EF_design},
we recall zonal orthogonal polynomials and show some properties
of them.
In Section \ref{sec:proof}, we give proofs of our results.
As an appendix, we give a lower bound for $1$-designs on $\G{m}{n}$
in Appendix~\ref{sec:appendix},
which the concept of $t$-designs on $\G{m}{n}$ was introduced by
Roy~\cite{Roy2009bca}.
In particular, we determine $1$-designs on $\G{m}{n}$ with the smallest cardinalities when $n$ is divided by $m$.
Finally, in Appendix \ref{sec:appendix2}, 
we show an example of $\Ades$-design 
with the smallest cardinality
which is not a great antipodal set of $\G{m}{n}$.

\section{Preliminary for antipodal sets}
\label{sec:Preliminary}

\subsection{Notation for complex Grassmannian manifolds}
\label{subsection:notations_CG}

We set up notation for complex Grassmannian manifolds in this subsection.

We fix positive integers $n$ and $m$ with $m < n$.
Let us consider the $n$-dimensional complex vector space $\mathbb{C}^n$ 
equipped with the standard Hermitian inner product.
We denote the \emph{complex Grassmannian manifold of rank $m$} by 
\[
\G{m}{n} := \{\, \text{$m$-dimensional complex subspaces of $\mathbb{C}^n$} \,\}.
\]
For $a\in \G{m}{n}$, let us denote $a^\perp \subset \C^n$
the orthogonal complement ($(n-m)$-dimensional) subspace of
$\C^n$ for $a$.
Since the map 
$\G{m}{n}\rightarrow \G{n-m}{n}$, $a \mapsto a^\perp$
is a diffeomorphism, we always assume $n \ge 2m$.

The complex Grassmaniann manifold $\G{m}{n}$ is a compact Riemannian symmetric space.
In fact, $\G{m}{n}$ can be represented as 
a homogeneous space $G/K$ 
by a compact symmetric pair $(G,K)$ 
defined as follows.
Let us put \[
G := U(n) = \{\, g \in M(n,\mathbb{C}) \mid g^* = g\inv \,\},
\]
where $g^*$ denotes the conjugate transpose of the complex matrix $g$.
Note that $G = U(n)$ is a compact Lie group.
The natural representation of $G = U(n)$ on $\mathbb{C}^n$
induces a transitive $G$-action on $\G{m}{n}$.
We write $\{ e_1,\dots,e_n \}$ for the standard orthonormal basis of $\mathbb{C}^n$,
and take $a_0 := \mathbb{C}\vspan \{ e_1,\dots,e_m \} \in \G{m}{n}$.
Then the isotropy subgroup $K$ at $a_0 \in \G{m}{n}$ can be written by 
\begin{align*}
K 	&:= \left\{ \begin{pmatrix} g_m & \\ & g_{n-m} \end{pmatrix} \mid g_m \in U(m),\ g_{n-m} \in U(n-m) \right\} \\
	&\simeq U(m) \times U(n-m).	
\end{align*}
Therefore, $\G{m}{n}$ can be represented as $G/K \simeq U(n)/U(m) \times U(n-m)$.
Here, we define an involutive automorphism $\tau$ on $U(n)$ by 
\[
\tau : U(n) \rightarrow U(n),\quad g \mapsto I_{m,n-m} \cdot g \cdot I_{m,n-m},
\]
where 
\[
I_{m,n-m} := \begin{pmatrix} I_m & \\ &  -I_{n-m} \end{pmatrix} \in M(n,\mathbb{R}).
\]
Then we can observe that the subgroup $K$ of $G$ 
consisted of all fixed points of $\tau$,
i.e.~
\[
K = \{\, g \in U(n) \mid \tau(g) = g \,\}.
\]
Therefore, $(G,K)$ is a compact symmetric pair,
and hence,
$\G{m}{n} \simeq G/K$ 
is a compact Riemannian symmetric space.

\subsection{Antipodal sets on a symmetric $R$-space}\label{subsection:pre:antipodal_on_symmR}

Let $\Omega$ be a connected compact Riemannian symmetric space.
For each point $x$ of $\Omega$, 
we denote $s_x : \Omega \rightarrow \Omega$ the geodesic symmetry at $x$. 
A subset $S$ of $\Omega$ is said to be \textit{antipodal} if $s_x(y) = y$ for any $x,y \in S$. 
Since $\Omega$ is compact and 
each $x \in \Omega$ is an isolated fixed point of the symmetry $s_x$,
any antipodal set $S$ must be finite.
Chen--Nagano \cite{Chen1988arg} defined the $2$-number of $\Omega$ by 
\[
\sharp_2 \Omega :=  \max \{\, |S| \mid S \text{ is an antipodal set in $\Omega$} \,\},
\]
and an antipodal set $S$ is said to be \textit{great} if $|S| = \sharp_2 \Omega$.

Our interesting in this paper is in the case where 
$\Omega$ is the complex Grassmannian manifold 
$\G{m}{n} \simeq U(n)/{U(m) \times U(m-n)}$.
It is well known that $\G{m}{n}$
is 
a Hermitian symmetric space,
and hence, 
a symmetric $R$-space (See \cite[\S II.1 in Part II]{Faraut-Kaneyuki-Koranyi-Ku-Roos} for the definition of symmetric $R$-spaces).

We recall fundamental results for antipodal sets on a symmetric $R$-space $\Omega = G/K$ as follows:

\begin{fact}[Takeuchi \cite{Takeuchi1989tn}]\label{fact:Takeuchi_2-number}
Let $\Omega$ be a symmetric $R$-space.
Then 
\[
\sharp_2 \Omega = \dim H_*(\Omega,\mathbb{Z}_2),
\]
where $H_*(\Omega,\mathbb{Z}_2)$ is the homology group of $\Omega$ with coefficient $\mathbb{Z}_2$.
\end{fact}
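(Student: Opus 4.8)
The plan is to prove the two inequalities $\sharp_2 \Omega \le \dim H_*(\Omega,\mathbb{Z}_2)$ and $\sharp_2 \Omega \ge \dim H_*(\Omega,\mathbb{Z}_2)$ separately, the first being valid for every connected compact symmetric space and the second exploiting the special structure of a symmetric $R$-space.

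For the upper bound I would argue by induction on $\dim \Omega$, the base case $\dim \Omega = 0$ being trivial. Fix an antipodal set $S$ and a point $x_0 \in S$; by definition every $y \in S$ satisfies $s_{x_0}(y) = y$, so $S$ is contained in the fixed-point set $F(s_{x_0})$ of the involutive isometry $s_{x_0}$. Write $F(s_{x_0}) = \bigsqcup_j F_j$ for its decomposition into connected components; each $F_j$ is a totally geodesic submanifold (the isolated pole $\{x_0\}$ together with the positive-dimensional polars), hence itself a connected compact symmetric space with geodesic symmetries obtained by restricting those of $\Omega$, and of dimension strictly less than $\dim \Omega$ because $x_0$ is an isolated fixed point of $s_{x_0}$. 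Consequently $S \cap F_j$ is antipodal in $F_j$, and the inductive hypothesis gives $|S \cap F_j| \le \dim H_*(F_j,\mathbb{Z}_2)$. Summing over $j$ and invoking the Smith inequality $\dim H_*(F(s_{x_0}),\mathbb{Z}_2) \le \dim H_*(\Omega,\mathbb{Z}_2)$ for the involution $s_{x_0}$, I obtain
\[
|S| = \sum_j |S \cap F_j| \le \sum_j \dim H_*(F_j,\mathbb{Z}_2) = \dim H_*(F(s_{x_0}),\mathbb{Z}_2) \le \dim H_*(\Omega,\mathbb{Z}_2),
\]
which is the desired bound.

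For the lower bound I would realize the symmetric $R$-space $\Omega$ by its canonical equivariant embedding into a Euclidean space $V$, so that the geodesic symmetries of $\Omega$ are induced by orthogonal involutions of $V$; for $\Omega = \G{m}{n}$ this is the embedding $a \mapsto P_a$ into the space of Hermitian matrices by orthogonal projection onto $a$. For a generic linear functional $v \in V$ the height function $f_v(x) = \langle x, v\rangle$ restricts to a Morse function on $\Omega$ with isolated, nondegenerate critical points. Two facts then finish the argument: first, the critical set of $f_v$ is an antipodal set (for $\G{m}{n}$ with $v$ a diagonal Hermitian matrix having distinct eigenvalues, the critical points are exactly the $\binom{n}{m}$ coordinate subspaces $\mathbb{C}\vspan\{e_i : i \in I\}$, which indeed form the great antipodal set); second, by Takeuchi's cell-decomposition theorem for symmetric $R$-spaces the function $f_v$ is perfect over $\mathbb{Z}_2$, so its number of critical points equals $\dim H_*(\Omega,\mathbb{Z}_2)$. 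Exhibiting an antipodal set of this cardinality yields $\sharp_2 \Omega \ge \dim H_*(\Omega,\mathbb{Z}_2)$, and combined with the upper bound this proves the equality.

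The main obstacle is the lower bound, and precisely the $\mathbb{Z}_2$-perfectness of the height function $f_v$: it is exactly here that the hypothesis that $\Omega$ be a symmetric $R$-space is indispensable, since for a general compact symmetric space a great antipodal set may be strictly smaller than $\dim H_*(\Omega,\mathbb{Z}_2)$ and a height function need not realize all of the $\mathbb{Z}_2$-homology. Showing that the critical set of $f_v$ is at once an antipodal set and a $\mathbb{Z}_2$-perfect collection of Morse critical points requires the structure theory of symmetric $R$-spaces and the associated Bruhat-type cell decomposition, and is the technical heart of Takeuchi's argument. A secondary, more routine point is verifying in the upper bound that the polars inherit a compatible symmetric-space structure, so that the inductive hypothesis genuinely applies.
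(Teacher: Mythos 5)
The paper does not actually prove this statement: it is imported verbatim as Fact~\ref{fact:Takeuchi_2-number} with a citation to Takeuchi \cite{Takeuchi1989tn}, so there is no internal argument to compare yours against. Measured against Takeuchi's original proof, your outline is essentially a faithful reconstruction of it, and both halves are sound. The upper bound $\sharp_2 \Omega \le \dim H_*(\Omega,\mathbb{Z}_2)$ is exactly Takeuchi's: fix $x_0$ in an antipodal set $S$, note $S \subset F(s_{x_0})$, decompose the fixed-point set into the point $\{x_0\}$ and the polars (each a totally geodesic, hence again compact symmetric, submanifold of strictly smaller dimension, so the induction on dimension applies, and $s_y$ restricted to a component through $y$ is its geodesic symmetry by total geodesy), then apply the Smith--Floyd inequality for the involution $s_{x_0}$; as you say, this half needs no $R$-space hypothesis. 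The lower bound is likewise Takeuchi's route: realize $\Omega$ by its canonical (extrinsically symmetric) embedding, take a generic height function $f_v$, identify its critical set with the intersection of $\Omega$ with a maximal abelian subspace containing $v$ (a finite Weyl-group orbit), check this set is antipodal, and invoke the $\mathbb{Z}_2$-perfectness of $f_v$ coming from Takeuchi's earlier cell-decomposition theorem for symmetric $R$-spaces. You correctly and honestly flag that the antipodality of the critical set and the $\mathbb{Z}_2$-perfectness are the technical heart and are deferred to that structure theory rather than proved; as a proof \emph{sketch} this is accurate and complete in architecture, with your $\G{m}{n}$ example (projections $P_a$, diagonal $v$ with distinct eigenvalues, critical points the coordinate subspaces) matching the great antipodal set \eqref{def:standard_great_antipodal} of the paper.
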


\begin{fact}[S\'{a}nchez \cite{Sanchez1997ind}, Tanaka--Tasaki \cite{Tanaka2013aos}]\label{fact:uniqueness_great}
Let $\Omega \simeq G/K$ be a symmetric $R$-space.
Then the following holds$:$
\begin{enumerate}
\item Any maximal antipodal set on $\Omega$ is great.
That is,
for any antipodal set $S_0$ of $\Omega$, 
there exists a great antipodal set $S$ of $\Omega$ such that $S_0 \subset S$. 
\item A great antipodal set of $\Omega$ is unique up to the conjugation of $G$-action. 
That is, for any two great antipodal sets $S$ and $S'$, there exists $g \in G$ such that $S' = g S$.
\end{enumerate}
\end{fact}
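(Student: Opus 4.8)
The plan is to translate the geometric antipodal condition into a group-theoretic one about the geodesic symmetries themselves. The starting point is the conjugation identity $s_{g\cdot x}=g\,s_x\,g^{-1}$, valid for any isometry $g$ of $\Omega$. Taking $g=s_x$ and using $s_x=s_x^{-1}$ gives $s_{s_x(y)}=s_x\,s_y\,s_x$, so the antipodal relation $s_x(y)=y$ forces $s_x$ and $s_y$ to commute. Thus an antipodal set $S$ produces a family $\{s_x\mid x\in S\}$ of mutually commuting involutive isometries, each of which fixes every point of $S$, and (since antipodal sets are finite) any antipodal $S_0$ extends to a set $S$ that is maximal with respect to inclusion. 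The whole argument then consists of reading such a maximal $S$ off from a maximal commuting family of involutions.

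Next I would realize these symmetries inside $G$. For a symmetric $R$-space the geodesic symmetry at each point is represented by an element of $G$ whose square is central, so a commuting family generates a closed abelian subgroup $A\subset G$; let $T$ be a maximal torus containing the identity component $A_0$. For the case of genuine interest, $\Omega=\G{m}{n}$, this becomes completely explicit: $s_a$ is the action of the unitary reflection $R_a$ that is $+1$ on $a$ and $-1$ on $a^\perp$, each $R_a$ is a Hermitian involution, and two of them commute exactly when $a$ and $b$ are simultaneously diagonalizable, i.e.\ exactly when $s_a(b)=b$. A commuting family of Hermitian involutions is simultaneously unitarily diagonalizable, and a family that is maximal with respect to inclusion corresponds to a full orthonormal eigenbasis, whose associated planes are precisely the coordinate $m$-planes for that basis. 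Hence a maximal antipodal set is exactly the $T$-fixed-point set $(\G{m}{n})^{T}$ for the diagonal torus $T$ determined by the eigenbasis, of cardinality $\binom{n}{m}$.

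To conclude, I would count: this canonical finite fixed set has cardinality $\dim H_*(\Omega,\mathbb{Z}_2)=\sharp_2\Omega$ by Fact~\ref{fact:Takeuchi_2-number} (for $\G{m}{n}$ the relevant number is $\binom{n}{m}=\chi(\G{m}{n})=\dim H_*(\G{m}{n},\mathbb{Z}_2)$, since the odd $\mathbb{Z}_2$-homology vanishes). Therefore every maximal-by-inclusion antipodal set already attains the maximum cardinality and is great, which gives (i) together with the inclusion $S_0\subset S$. For (ii), two great antipodal sets arise as $\Omega^{T}$ and $\Omega^{T'}$ from orthonormal eigenbases, equivalently from maximal tori $T,T'$ of $G$; since all maximal tori of $G$ are conjugate, $T'=gTg^{-1}$ for some $g\in G$, whence $\Omega^{T'}=g\,\Omega^{T}$ and the two great antipodal sets are $G$-congruent.

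The \emph{main obstacle} is the step claiming that a maximal commuting family of symmetries has finite joint fixed-point set equal to $S$: for a small antipodal set the joint fixed set is typically positive-dimensional (a product of smaller Grassmannians), so one must genuinely exploit maximality, and one must control the interaction between the finite $2$-group part of $A$ and the torus $T$, as well as the presence of poles, that is, distinct points sharing a geodesic symmetry such as $a$ and $a^\perp$ when $n=2m$. For a general symmetric $R$-space the torus count need not equal $\dim H_*(\Omega,\mathbb{Z}_2)$, and there I would instead run the Chen--Nagano polar/meridian induction: fix $o\in S$, decompose the fixed-point set of $s_o$ into polars, use that each polar is again a lower-dimensional symmetric $R$-space, apply the inductive hypothesis, and reassemble, with uniqueness propagated by the transitivity of the stabilizer of $o$ on the configurations in each polar.
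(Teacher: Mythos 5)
The paper does not actually prove this statement: it is quoted as a Fact from S\'{a}nchez \cite{Sanchez1997ind} and Tanaka--Tasaki \cite{Tanaka2013aos}, so your attempt must be measured against the statement itself rather than an in-paper argument. For the case $\Omega = \G{m}{n}$ your argument is correct and essentially complete, and it is a nice, self-contained route. Writing $R_a := P_a - P_{a^\perp}$, one indeed has $s_a(b)=b \iff [P_a,P_b]=0 \iff [R_a,R_b]=0$ (Proposition~\ref{prop:equiv_antipodal_set} supplies the geometric half), so an antipodal set $S$ gives a commuting family of Hermitian involutions; simultaneous diagonalization yields an orthonormal basis $\{f_1,\dots,f_n\}$ such that every $a \in S$ is a coordinate $m$-plane $\C\vspan\{f_i \mid i \in I\}$. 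Since the full set of $\binom{n}{m}$ coordinate planes of a fixed basis is itself antipodal, every antipodal set has cardinality at most $\binom{n}{m}$, every maximal-by-inclusion antipodal set equals such a full coordinate set, and any two such sets are carried to each other by the unitary matching the two eigenbases; this gives both (i) and (ii), and even recovers Fact~\ref{fact:2-number_of_Gmn}. Note that this containment-plus-maximality argument quietly disposes of the ``main obstacle'' you raise: you never need the joint fixed-point set of the family to be finite or to equal $S$, only that $S$ sits inside one full coordinate system. Your detour through Fact~\ref{fact:Takeuchi_2-number} and $\dim H_*(\G{m}{n},\Z_2)=\binom{n}{m}$ is likewise unnecessary for this case.

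The genuine gap is that the Fact is asserted for an \emph{arbitrary} symmetric $R$-space, and there your text is a plan rather than a proof. In general the commutation trick gives only one implication ($s_x(y)=y$ implies $s_xs_ys_x = s_y$, not conversely, precisely because of poles: $s_{s_x(y)} = s_y$ forces $s_x(y)$ to be $y$ \emph{or} a pole of $y$), the identification of maximal commuting families with tori is conceded by you to fail outside the Grassmannian, and the closing sentence invoking a Chen--Nagano polar/meridian induction leaves all the hard ingredients unverified: that polars of a symmetric $R$-space are again symmetric $R$-spaces, that the isotropy group acts transitively on the relevant configurations in each polar, and how poles are reassembled --- this is exactly the content of the cited proofs of S\'{a}nchez and Tanaka--Tasaki. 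So your proposal proves the statement in the only case this paper uses it ($\G{m}{n}$, where it could even replace the citation), but it does not prove Fact~\ref{fact:uniqueness_great} as stated.
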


\subsection{Antipodal sets on a complex Grassmannian manifold}\label{subsection:antipodal_on_CG}

We go back to the case where $\Omega$ is a complex Grassmannian manifold $\G{m}{n}$.

To notice antipodal sets on $\G{m}{n}$,
we recall the geodesic symmetry $s_a$ on $\G{m}{n}$ for each $a \in \G{m}{n}$ 
as follows.
Let us fix $a \in \G{m}{n}$.
Since $\C^n$ is decomposed as the orthogonal direct sum
$\C^n = a \oplus a^\perp$,
for each vector $v \in \mathbb{C}^n$,
there uniquely exist $v_a \in a$ and $v_{a^\perp} \in a^\perp$ with $v = v_a + v_{a^\perp}$.
Let us consider the involutive operator $\tilde{s}_a$ on $\mathbb{C}^n$
acts on $a$ as 
the identity 
and 
acts on $a^\perp$ as $-1$.
That is, 
\begin{align}
\tilde{s}_a(v) = v_a - v_{a^\perp} \quad \text{for } v \in \mathbb{C}^n. \label{def:tilde-s}
\end{align}
The involutive operator $\tilde{s}_a$ on $\mathbb{C}^n$ induces an involutive isometry $s_a$ on $\G{m}{n}$.
It is known that the map $s_a : \G{m}{n} \rightarrow \G{m}{n}$ is the geodesic symmetry at $a \in \G{m}{n}$.
In particular, for fixed $a,b \in \G{m}{n}$,
the equality $s_a(b) = b$ holds if and only if 
$b$ is decomposed as the orthogonal direct sum
\[
b = (a \cap b) \oplus (a^\perp \cap b).
\]

We give an example of antipodal set as follows.
Recall that $\{ e_1,\dots,e_n \}$ denotes the standard orthonormal basis of $\mathbb{C}^n$.
For each $m$-subset $I$ of $\{\, 1,\dots, n \,\}$,
we put $a_{I} := \mathbb{C}\vspan \{\, e_{i} \mid i \in I \,\} \in \G{m}{n}$.
Then, for any $m$-subsets $I$ and $I'$ of $\{\,1,\dots,n\,\}$,
one can easily observe that 
$a_{I'}$ is decomposed as the orthogonal direct sum
$a_{I'} = (a_{I} \cap a_{I'}) \oplus (a_{I}^\perp \cap a_{I'})$. 
Hence, 
we have $s_{a_{I}}(a_{I'}) = a_{I'}$.
Therefore, the finite subset 
\begin{align}
S := \{\, a_{I} \mid \text{$I$ is an $m$-subset of $\{\, 1,\dots,n \,\}$}\,\} \subset \G{m}{n} \label{def:standard_great_antipodal}
\end{align}
is an antipodal set of $\G{m}{n}$ with $|S| = \binom{n}{m}$.
One can observe that $S$ is a maximal antipodal set on $\G{m}{n}$.
Thus, by Fact \ref{fact:uniqueness_great}, 
the antipodal set $S$ on $\G{m}{n}$ is great 
and any great antipodal set $S'$ on $\G{m}{n}$ 
is congruent to $S$ by the $U(n)$-action.
In particular, the following holds:
\begin{fact}[Chen--Nagano \cite{Chen1988arg}]\label{fact:2-number_of_Gmn}
$\sharp_2 \G{m}{n} = \binom{n}{m}$.
\end{fact}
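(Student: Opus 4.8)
The plan is to leverage the construction already in hand together with Fact~\ref{fact:uniqueness_great}. The explicit set $S$ in \eqref{def:standard_great_antipodal} is antipodal with $|S| = \binom{n}{m}$, so by Fact~\ref{fact:uniqueness_great}(i) it suffices to prove that $S$ is \emph{maximal} as an antipodal set; once this is known, $S$ is great and therefore $\sharp_2 \G{m}{n} = |S| = \binom{n}{m}$. Thus the whole statement reduces to the single assertion that no point of $\G{m}{n}$ can be adjoined to $S$ while keeping antipodality.

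To prove maximality, first I would suppose $b \in \G{m}{n}$ satisfies $s_{a_I}(b) = b$ for every $m$-subset $I \subset \{1,\dots,n\}$. By the characterization recalled above this means $b = (a_I \cap b) \oplus (a_I^\perp \cap b)$, i.e.\ $b$ is invariant under each diagonal reflection $\tilde{s}_{a_I}$ from \eqref{def:tilde-s}, which acts as $+1$ on the coordinate lines indexed by $I$ and as $-1$ on the rest. Let $W$ be the subgroup of diagonal sign operators on $\C^n$ generated by $\{\tilde{s}_{a_I}\}$; then $b$ must be $W$-invariant. The key computation is to show that $W$ \emph{separates coordinates}, i.e.\ for every pair $i \ne j$ there is an element of $W$ acting by opposite signs on $e_i$ and $e_j$. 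Granting this, the joint eigenspaces of $W$ are exactly the lines $\C e_i$, so any $W$-invariant subspace is forced to be a coordinate subspace $a_J$; hence $b = a_J \in S$ and $S$ is maximal.

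The main obstacle is precisely this generation statement for $W$. Here I would compute products $\tilde{s}_{a_I}\tilde{s}_{a_{I'}}$ for $m$-subsets $I,I'$ differing in a single element to produce sign operators that are $-1$ on exactly two coordinates, and check, using $n \ge 2m \ge 2$, that such operators already separate every pair of coordinate lines; the only mild care needed is for the boundary case $n = 2m$ and for tracking the parity of $n-m$, neither of which obstructs the conclusion.

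Alternatively, the statement follows at once from Fact~\ref{fact:Takeuchi_2-number}, reducing everything to the computation of $\dim H_*(\G{m}{n},\Z_2)$. Using the Schubert cell decomposition of $\G{m}{n}$, whose cells are indexed by Young diagrams fitting in an $m \times (n-m)$ box and are all of even real dimension, the mod-$2$ boundary maps vanish, so $H_*(\G{m}{n},\Z_2)$ is free of rank equal to the number of such diagrams, namely $\binom{n}{m}$. This route is shorter but imports the topological input of Fact~\ref{fact:Takeuchi_2-number}, whereas the first route is elementary and self-contained modulo Fact~\ref{fact:uniqueness_great}.
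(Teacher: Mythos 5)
Your first route is essentially the paper's own argument: the paper takes the explicit set $S$ of \eqref{def:standard_great_antipodal}, asserts it is a maximal antipodal set, and invokes Fact~\ref{fact:uniqueness_great}(i) to conclude $S$ is great and hence $\sharp_2 \G{m}{n} = \binom{n}{m}$; your filling-in of the maximality step (which the paper leaves as ``one can observe'') is correct, and in fact simpler than you make it, since for any $i \neq j$ a single reflection $\tilde{s}_{a_I}$ with $i \in I$ and $j \notin I$ already acts with opposite signs on $e_i$ and $e_j$ (such an $I$ exists because $m < n$), so no products of reflections or parity bookkeeping are needed. Your alternative via Fact~\ref{fact:Takeuchi_2-number} and the even-dimensional Schubert cell decomposition is also valid, but it is not the route the paper takes for this fact.
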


Remark that a great antipodal set $S$ carries a metric and cometric association scheme.
For detail of association schemes, see Bannai--Ito~\cite{Bannai1984aci}
and Brouwer--Cohen--Neumaier~\cite{Brouwer1989dg}.

\section{Main results}
\label{sec:design}

It is known that 
for any $1$-design $X$ on the $(n-1)$-dimensional complex projective space, 
the inequality $|X| \geq n$ holds, and $X$ is said to be tight if $|X| = n$
(See Hogger \cite{Hoggar1982tip} for more details).
It is also known, but implicitly, that 
for a complex projective space,
great antipodal sets 
can be characterized by tight $1$-designs. 
That is, the following fact holds:

\begin{fact}\label{fact:projective}
For a finite subset $S$ of the $(n-1)$-dimensional complex projective space,
the following are equivalent:
\begin{enumerate}
\item $S$ is a great antipodal set, 
i.e.,  $a \perp b$ for any $a,b \in S$ with $a \neq b$
and $|S| = n$.
\item $S$ is a tight $1$-design, 
i.e., $S$ is an $1$-design with $|S| = n$.
\end{enumerate}
\end{fact}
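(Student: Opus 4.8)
The plan is to reduce both conditions to a single linear-algebraic identity on projection operators. I identify the $(n-1)$-dimensional complex projective space with $\G{1}{n}$ and model each point $a$ as the line $\C\vspan\{v_a\}$ for a unit vector $v_a$ (determined up to a phase), attaching to it the rank-one orthogonal projection $P_a := v_a v_a^* \in M(n,\C)$, which is independent of the choice of $v_a$. The first step is to recall the standard description of $1$-designs in this model: a finite subset $X$ is a $1$-design if and only if
\[
\frac{1}{|X|}\sum_{a \in X} P_a = \frac{1}{n} I_n .
\]
Indeed, the functions of ``degree $\le 1$'' on $\G{1}{n}$ are precisely the maps $a \mapsto \mathrm{Tr}(A P_a)$ for Hermitian $A$, with the constants corresponding to $A = I_n$ (since $\mathrm{Tr}(P_a)=1$); as $\int_{\G{1}{n}} P_a\, da = \tfrac1n I_n$ by $U(n)$-invariance of the normalized measure, the cubature identity for all such functions is equivalent to the displayed frame condition. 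Thus a tight $1$-design is exactly a subset $X$ with $|X| = n$ and $\sum_{a\in X} P_a = I_n$, and the lower bound $|X| \ge n$ is recovered by a rank comparison in the frame condition.

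Next I would prove (i) $\Rightarrow$ (ii). If $S$ is a great antipodal set, then for distinct lines $a,b \in S$ the antipodality $s_a(b) = b$ together with $\dim b = 1$ forces $a \cap b = 0$ and hence $b \subset a^\perp$, i.e.\ $v_a \perp v_b$; combined with $|S| = n = \sharp_2 \G{1}{n}$ this says that $\{v_a \mid a \in S\}$ is an orthonormal basis of $\C^n$. The completeness relation for an orthonormal basis gives $\sum_{a \in S} P_a = \sum_{a\in S} v_a v_a^* = I_n$, so $S$ satisfies the frame condition with $|S| = n$ and is therefore a tight $1$-design.

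For (ii) $\Rightarrow$ (i), I assume $X$ is a $1$-design with $|X| = n$, so that $\sum_{a\in X} P_a = I_n$. Evaluating the quadratic form of both sides at a fixed unit vector $v_b$ with $b \in X$ yields
\[
1 = \sum_{a\in X} |\langle v_a, v_b\rangle|^2 = 1 + \sum_{a \ne b} |\langle v_a, v_b\rangle|^2 ,
\]
whence $\langle v_a, v_b\rangle = 0$ for every $a \ne b$. Thus the lines in $X$ are mutually orthogonal, and since $|X| = n = \sharp_2 \G{1}{n}$ this is exactly condition (i), so $X$ is a great antipodal set.

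The only genuinely nontrivial point is the first step: justifying that the design property coincides with the frame identity $\tfrac{1}{|X|}\sum_a P_a = \tfrac1n I_n$. This amounts to identifying the space of degree-one functions on $\G{1}{n}$ with the Hermitian matrices acting via $A \mapsto \mathrm{Tr}(AP_a)$ and computing the invariant integral $\int P_a\, da = \tfrac1n I_n$. Once this translation is secured, both implications follow immediately from the completeness relation for orthonormal bases and the one-line Parseval computation above.
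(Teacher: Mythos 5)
Your proof is correct, and it takes a genuinely different route from the paper. The paper never proves Fact~\ref{fact:projective} in isolation: it recovers it as the $m=1$ case of Theorem~\ref{Mainthm:great_anti_to_EF_design}, whose proof runs through the zonal-polynomial machinery --- expanding $F^{\Ades}=X^\ast_{(1^m)}$ and $F^{\Adesh}$ in the $Z_\mu$ basis with controlled signs (Propositions~\ref{prop:Z=sumX}, \ref{prop:zonal_poly_(2111)}, Lemma~\ref{lem:Z(1)Z(1^i)}) and then applying the linear programming bound of Proposition~\ref{prop:LP-bounds} together with the positivity Fact~\ref{fact:positive_Z}. You instead linearize everything through the moment operator $\sum_{a\in X} P_a$: the design condition for $H_{\{(0),(1)\}}$ becomes the frame identity $\frac{1}{|X|}\sum_a P_a = \frac1n I_n$, the bound $|X|\ge n$ falls out of a rank count rather than zonal positivity, and both implications reduce to the completeness relation for orthonormal bases and a one-line Parseval computation. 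The two arguments are secretly related: your identity $\sum_{a\ne b}|\langle v_a,v_b\rangle|^2=0$ is exactly the equality condition $F^{\Ades}(y(a,b))=y_1(a,b)=0$ in the paper's LP framework, since $y_1(a,b)=|\langle v_a,v_b\rangle|^2$ for lines. What the paper's route buys is uniformity in $m$ --- the same scheme characterizes great antipodal sets in every $\G{m}{n}$, where no rank-one projection trick is available because $H_{\mathcal{T}}$ involves the representations $(1^i)$ and $(2,1^{i-1})$ and the moment operator no longer captures them. What your route buys is a short, self-contained proof for $m=1$ with no Schur polynomials or hypergeometric coefficients. The one step you should make fully precise is the identification of $H_{(0)}\oplus H_{(1)}$ with $\{\,a\mapsto \mathrm{Tr}(AP_a)\,\}$: the map $A\mapsto \mathrm{Tr}(AP_{\cdot})$ is $U(n)$-equivariant on $M(n,\C)=\C I_n \oplus \{\text{traceless matrices}\}$, the traceless summand is irreducible with highest weight $(1,0,\dots,0,-1)=\phi((1))$, the map is visibly nonzero on each summand, and then the uniqueness statement in Fact~\ref{fact:func_realization_of_reps} forces the image to be exactly $H_{(0)}\oplus H_{(1)}$; your sketch gestures at this but leans on the uniqueness of $H_{(1)}$ inside $C^0(\G{1}{n})$ without naming it. With that detail supplied, the argument is complete.
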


In this section,
we define designs on $\G{m}{n}$
by using some basic facts for harmonic analysis,
and give a kind of generalization of Fact \ref{fact:projective} to 
complex Grassmannian manifolds $\G{m}{n}$.

\subsection{Harmonic analysis on $\G{m}{n}$}
\label{subsection:Harmonic}

In this subsection,
we briefly recall some basic facts for harmonic analysis on $\G{m}{n}$
and set up our notation. 
A detailed summary of harmonic analysis on $\G{m}{n}$ can be found in
Roy~\cite{Roy2009bca}.

Let us put $C^0(\G{m}{n})$ to the functional space on $\G{m}{n}$
consisted of all $\mathbb{C}$-valued continuous functions on $\G{m}{n}$.
Recall that $U(n)$ acts on $\G{m}{n}$ transitively
with the isotropy subgroup $K \simeq U(m) \times U(n-m)$ at 
\[
a_0 := \mathbb{C}\vspan \{e_1,\dots,e_m \} \in \G{m}{n},
\]
where 
$e_1,\dots,e_{n}$ 
is the standard basis of $\mathbb{C}^n$,
see Section \ref{subsection:notations_CG}. 
Then $U(n)$ acts also on $C^0(\G{m}{n})$ naturally.
Here, we denote by $\nu$ 
the normalized $U(n)$-invariant Haar measure on $\G{m}{n}$,
where ``normalized'' means that $\nu(\G{m}{n}) = 1$.
Then $C^0(\G{m}{n})$ has the $L^2$ inner product 
with respect to the measure $\nu$:
\begin{align}
\langle f,g \rangle:=\int_{\G{m}{n}} (f \cdot \overline{g}) d\nu \label{def:innerproduct}
\end{align}
for $f,g\in C^0(\G{m}{n})$,
where $\overline{g}$ and $f \cdot \overline{g}$ denotes the complex conjugation of the function $g$ and the product of $f$ and $\overline{g}$, respectively. 
Since $\nu$ is $U(n)$-invariant,
the infinite-dimensional representation of $U(n)$ on $C^0(\G{m}{n})$ 
preserves the inner product on $C^0(\G{m}{n})$.

By the highest weight theory,
a complex irreducible unitary representation of a connected compact Lie group
is determined by 
its highest weight, up to isomorphisms.
For the unitary group $U(n)$,
the set $\widehat{U(n)}$ of 
the complex irreducible unitary representations of $U(n)$, 
up to isomorphisms,
can be regarded as
\[
\widehat{U(n)} \simeq \{\, (\lambda_1,\dots,\lambda_n) \in \mathbb{Z}^n \mid \lambda_1 \geq \cdots \geq \lambda_n \,\}
\]
as in Bump~\cite[Theorem 38.3]{Bump2004Lie}.
Throughout this paper, for each $\lambda \in \widehat{U(n)}$, 
we fix an irreducible unitary representation $(\rho_\lambda,V_{\lambda})$ of $U(n)$ 
corresponding to $\lambda$.

We also put 
\begin{align}
\Partition{m} := \{\, (\mu_1,\dots,\mu_m) \in \Z^m \mid \mu_1 \geq \dots \geq \mu_m \geq 0 \,\} \label{def:Pm}
\end{align}
and define the map $\phi: \Partition{m} \rightarrow \widehat{U(n)}$ by 
\[
\phi : \Partition{m} \rightarrow \widehat{U(n)},\ \mu \mapsto (\mu_1,\dots,\mu_m, 0, \dots,0,-\mu_m,\dots,-\mu_1).
\]
Then it is known that
$
\dim V_{\phi(\mu)}^{K} = 1
$
for any $\mu \in \Partition{m}$,
where 
\[
V_{\lambda}^{K} := \{\, v \in V \mid \rho_\lambda(k)v=v \ \text{for any } k \in K \,\}
\]
(See Goodman--Wallach~\cite[\S 12.3.2]{Goodman-Wallach2000rep}
for the details).

Throughout this paper, 
we use the following notation for elements of $\Partition{m}$:
\begin{align}
(1^i) &:= (\underbrace{1,1,\ldots ,1}_{i},\underbrace{0,0,\ldots ,0}_{m-i}), \label{def:1^i} \\
(i) &:= (i,\underbrace{0,0,\ldots ,0}_{m-1}), \label{def:i} \\
(2,1^{i-1}) &:= (2,\underbrace{1,1,\ldots ,1}_{i-1},\underbrace{0,0,\ldots ,0}_{m-i}). \label{def:2_1^i-1}
\end{align}

\begin{example}
By using Weyl's character formula
(cf.~Sepanski~\cite[Theorem 7.28]{Sepanski2007clg}),
one can prove that 
\[
\bigoplus_{i=0}^m V_{\phi((1^i))} \simeq \End_\mathbb{C} (\wedge^m \mathbb{C}^n)
\]
as representations of $U(n)$.
\end{example}

Let us fix $\mu \in \Partition{m}$.
For each $v \in V_{\phi(\mu)}$, $w \in V_{\phi(\mu)}^K$ 
and $a \in \G{m}{n}$, we put 
\[
\Phi_{\mu}(v \otimes w) (a) := \langle v,g_a w \rangle_{\phi(\mu)}, 
\]
where $g_a$ is an element $g_a \in U(n)$ with $g_a \cdot a_0 = a$
and 
$\langle \cdot,\cdot \rangle_{\phi(\mu)}$ denotes 
the Hermitian inner product of $V_{\phi(\mu)}$.
Then $\Phi_{\mu}(v \otimes w) (a)$ is well-defined
and $\Phi_{\mu}$ can be extended to a $\mathbb{C}$-linear map 
\[
\Phi_{\mu} : V_{\phi(\mu)} \otimes V_{\phi(\mu)}^{K} \rightarrow C^0(\G{m}{n}).
\]
We put 
\begin{align}
H_\mu := \Phi(V_{\phi(\mu)} \otimes V_{\phi(\mu)}^K) \subset C^0(\G{m}{n}).
\end{align}
Then $H_\mu$ is a finite dimensional subspace of $C^0(\G{m}{n})$.

\begin{example}\label{ex:const}
For the case where $\mu = (0) \in \Partition{m}$, 
the functional space $H_{(0)}$ is consited of all constant functions on $\G{m}{n}$,
see \eqref{def:i} for the notation of $(0) \in \Partition{m}$.
In particular, $\dim H_{(0)} = 1$.
\end{example}

By the Peter--Weyl theorem for compact symmetric spaces, 
which can be found in Takeuchi~\cite[Theorem 1.3]{Takeuchi1975spherical}, 
we obtain the following facts:

\begin{fact}\label{fact:func_realization_of_reps}
The following holds:
\begin{enumerate}
\item For any $\mu \in \Partition{m}$,
the space $H_\mu$ is the unique $U(n)$-subrepresentation of $C^0(\G{m}{n})$ with 
$H_\mu \simeq V_{\phi(\mu)}$ as unitary representations of $U(n)$.
In particular, $\dim H_\mu = \dim V_{\phi(\mu)}$.
\item 
If $\mu \neq \mu'$ in $\Partition{m}$, then $H_\mu \perp H_{\mu'}$ in $C^0(\G{m}{n})$.
\item \label{item:PW} The subspace $\bigoplus_{\mu \in \Partition{m}} H_\mu$ is dense in $C^0(\G{m}{n})$.
\end{enumerate}
\end{fact}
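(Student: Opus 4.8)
The plan is to deduce all three assertions from the Peter--Weyl theorem for the compact symmetric space $\G{m}{n} \simeq G/K$ (the cited Takeuchi~\cite[Theorem 1.3]{Takeuchi1975spherical}), whose content I would record in the form that $L^2(\G{m}{n})$ decomposes as an orthogonal Hilbert direct sum of $U(n)$-isotypic components, the component of type $V_\lambda$ occurring with multiplicity $\dim V_\lambda^K$ and the algebraic sum of all components being dense. The only representation-theoretic input beyond this is the Cartan--Helgason-type statement already recalled from Goodman--Wallach~\cite[\S 12.3.2]{Goodman-Wallach2000rep}: the spherical representations of $(U(n),K)$, i.e.\ those $\lambda \in \widehat{U(n)}$ with $V_\lambda^K \neq 0$, are \emph{exactly} the $\phi(\mu)$ with $\mu \in \Partition{m}$, and each satisfies $\dim V_{\phi(\mu)}^K = 1$. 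Thus every spherical type occurs with multiplicity one, which is the feature that makes the uniqueness in (i) meaningful.

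The next step is to check that the explicit map $\Phi_\mu$ realizes the $\phi(\mu)$-component. First, $\Phi_\mu$ is well defined because $w \in V_{\phi(\mu)}^K$ is $K$-fixed: if $g_a a_0 = g_a' a_0$ then $g_a\inv g_a' \in K$, so $g_a' w = g_a w$ by $K$-invariance of $w$. Second, fixing a nonzero $w_0 \in V_{\phi(\mu)}^K$ (the space is one-dimensional), the map $v \mapsto \Phi_\mu(v \otimes w_0)$ is $U(n)$-equivariant for the natural action $(g\cdot f)(a) = f(g\inv a)$: choosing $g_{g\inv a} = g\inv g_a$ and using unitarity of $\rho_{\phi(\mu)}$ gives $(g\cdot \Phi_\mu(v\otimes w_0))(a) = \langle v, g\inv g_a w_0\rangle_{\phi(\mu)} = \langle g v, g_a w_0\rangle_{\phi(\mu)} = \Phi_\mu(gv \otimes w_0)(a)$. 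Since $V_{\phi(\mu)}$ is irreducible and the map is nonzero (e.g.\ $\Phi_\mu(w_0 \otimes w_0)(a_0) = \langle w_0,w_0\rangle_{\phi(\mu)} > 0$), Schur's lemma forces it to be injective; as $V_{\phi(\mu)}^K$ is one-dimensional its image is exactly $H_\mu$. Hence $H_\mu \simeq V_{\phi(\mu)}$ and $\dim H_\mu = \dim V_{\phi(\mu)}$.

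With this in hand, the three claims fall out. For (i), $H_\mu$ is a subrepresentation of $C^0(\G{m}{n}) \subset L^2(\G{m}{n})$ isomorphic to $V_{\phi(\mu)}$; since multiplicity one means the full $V_{\phi(\mu)}$-isotypic component of $L^2(\G{m}{n})$ is a single irreducible copy, $H_\mu$ must coincide with it, giving uniqueness. For (ii), the map $\phi$ is visibly injective, so $\mu \neq \mu'$ yields non-isomorphic types $\phi(\mu), \phi(\mu')$; distinct isotypic components are orthogonal in $L^2(\G{m}{n})$ by Peter--Weyl, so $H_\mu \perp H_{\mu'}$. For (iii), the density part of Takeuchi's theorem says the algebraic sum of all spherical components is dense, and by the Goodman--Wallach classification these components are precisely the $H_\mu$, $\mu \in \Partition{m}$.

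I expect the only genuine obstacle to be the bookkeeping needed to pin down the spherical spectrum and the multiplicity-one property, i.e.\ verifying that $\{\phi(\mu) \mid \mu \in \Partition{m}\}$ really exhausts the $\lambda$ with $V_\lambda^K \neq 0$ and that each such space is one-dimensional. This is precisely the content I would import wholesale from the Cartan--Helgason theorem in the form stated in Goodman--Wallach, rather than reprove; once it is granted, everything else is the routine equivariance and Schur's-lemma verification above, together with a careful matching of the inner-product conventions used in defining $\Phi_\mu$.
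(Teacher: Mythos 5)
Your proposal is correct and takes essentially the same route as the paper: the paper offers no written proof, deriving the Fact directly from the Peter--Weyl theorem for compact symmetric spaces (Takeuchi) combined with the Cartan--Helgason-type input $\dim V_{\phi(\mu)}^K = 1$ cited from Goodman--Wallach, which are exactly the two ingredients you import. Your verification that $\Phi_\mu$ is well defined, $U(n)$-equivariant, and injective by Schur's lemma, and that multiplicity one yields the uniqueness in (i), is precisely the routine bookkeeping the paper leaves implicit in its citation.
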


Note that 
Fact \ref{fact:func_realization_of_reps} \eqref{item:PW} gives 
the irreducible decomposition of 
the $U(n)$-representation $C^0(\G{m}{n})$.

It is known that the dimension of the irreducible representation of a compact Lie group can be computed by Weyl's dimension formula
(cf.~Sepanski~\cite[Theorem 7.32]{Sepanski2007clg}).
In the case of $U(n)$,
the dimension of an irreducible representation $V_{\lambda}$
is
\[
\dim V_{\lambda} =
\prod_{1\le i<j\le n}\frac{\lambda_i-\lambda_j+j-i}{j-i}.
\]
This formula yields the following:
\begin{eqnarray}
\label{eq:dimH_(1^i)}
\dim H_{(1^i)}&=&\frac{n-2i+1}{n+1}\binom{n+1}{i}^2,\\
\label{eq:dimH_(i)}
\dim H_{(i)}&=&\frac{n+2i-1}{n-1}\binom{n+i-2}{i}^2,\\
\label{eq:dimH_(2,1^i-1)}
\dim H_{(2,1^{i-1})}&=&\frac{i^2 (n+3) (n-2i+1)}{(n-i+2)^2}\binom{n+1}{i+1}^2,
\end{eqnarray}
see \eqref{def:1^i}, \eqref{def:i} and \eqref{def:2_1^i-1} for the notation of $(1^i)$, $(i)$ and $(2,1^{i-1})$.

\subsection{Designs on $\G{m}{n}$}\label{subsection:definition_of_designs}

For a finite subset $\mathcal{T}$ of $\Partition{m}$, 
we put 
\begin{align}
H_{\mathcal{T}} :=
\bigoplus_{\mu \in \mathcal{T}} H_\mu. \label{def:H_T}
\end{align}
For the case where $\mathcal{T} = \emptyset$, then we put 
$
H_{\emptyset} := \{ 0 \}.
$

We give a definition of $\mathcal{T}$-designs on $\G{m}{n}$ as follows:

\begin{defn}\label{defn:T-design}
Let $X$ be a non-empty finite subset of $\G{m}{n}$,
and $\mathcal{T}$ be a finite subset of $\Partition{m}$.
We say that $X$ is a $\mathcal{T}$-design if 
\[
\frac{1}{|X|}\sum_{a \in X} f(a) = \int_{\G{m}{n}} f d\nu \quad \text{for any } f \in H_\mathcal{T}.
\]
\end{defn}

Here are three easy observations for designs on $\G{m}{n}$ as follows:

\begin{obs}\label{obs:designs}
Let $\mathcal{T}$, $\mathcal{T}'$ be finite subsets in $\Partition{m}$.
Then the following holds:
\begin{itemize}
\item If $X \subset \G{m}{n}$ is both a $\mathcal{T}$-design and a $\mathcal{T}'$-design.
Then $X$ is also a $\mathcal{T} \cup \mathcal{T}'$-design.
\item If $\mathcal{T}' \subset \mathcal{T}$,
then any $\mathcal{T}$-design is also a $\mathcal{T}'$-design.
\item If $\mathcal{T} = \{(0)\}$, then 
any non-empty finite subset of $\G{m}{n}$ is an $\mathcal{T}$-design on $\G{m}{n}$.
\end{itemize}
\end{obs}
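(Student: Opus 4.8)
The plan is to observe that all three statements reduce to elementary linear algebra together with the normalization $\nu(\G{m}{n}) = 1$, so none of them requires any analysis beyond bookkeeping of the spaces $H_{\mathcal{T}}$. The central observation I would record first is that the design condition in Definition~\ref{defn:T-design} compares two $\mathbb{C}$-linear functionals on $C^0(\G{m}{n})$: the discrete average $L_X \colon f \mapsto \frac{1}{|X|}\sum_{a \in X} f(a)$ and the integral $I \colon f \mapsto \int_{\G{m}{n}} f\, d\nu$. Consequently the coincidence locus
\[
W_X := \{\, f \in C^0(\G{m}{n}) \mid L_X(f) = I(f) \,\}
\]
is a $\mathbb{C}$-linear subspace of $C^0(\G{m}{n})$, and $X$ is a $\mathcal{T}$-design precisely when $H_{\mathcal{T}} \subset W_X$. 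Phrasing the definition this way makes each of the three assertions transparent.

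For the first item I would use that $H_{\mathcal{T} \cup \mathcal{T}'} = H_{\mathcal{T}} + H_{\mathcal{T}'}$, which is immediate from the definition \eqref{def:H_T} of $H_{\mathcal{T}}$ as the span of the $H_\mu$ over the index set. If $X$ is both a $\mathcal{T}$-design and a $\mathcal{T}'$-design, then $H_{\mathcal{T}} \subset W_X$ and $H_{\mathcal{T}'} \subset W_X$; since $W_X$ is a subspace, it contains the sum $H_{\mathcal{T}} + H_{\mathcal{T}'} = H_{\mathcal{T} \cup \mathcal{T}'}$, so $X$ is a $\mathcal{T} \cup \mathcal{T}'$-design. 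The second item is even shorter: if $\mathcal{T}' \subset \mathcal{T}$ then $H_{\mathcal{T}'} \subset H_{\mathcal{T}}$ directly from \eqref{def:H_T}, whence $H_{\mathcal{T}} \subset W_X$ forces $H_{\mathcal{T}'} \subset W_X$.

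For the third item I would invoke Example~\ref{ex:const}, which identifies $H_{(0)}$ with the space of constant functions. For a constant $f \equiv c$ one has $L_X(f) = c$ for any non-empty finite $X$, while $I(f) = c\,\nu(\G{m}{n}) = c$ because $\nu$ is normalized; hence $H_{(0)} \subset W_X$ automatically. I do not expect a genuine obstacle in any of the three parts—the only step that must not be skipped is the normalization $\nu(\G{m}{n}) = 1$, which is exactly what makes the integral of a constant agree with its discrete average and is therefore the sole piece of content in item (iii).
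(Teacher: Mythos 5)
Your proof is correct: the paper states this Observation without proof, regarding it as immediate, and your argument via the subspace $W_X = \ker(L_X - I)$ together with $H_{\mathcal{T}\cup\mathcal{T}'} = H_{\mathcal{T}} + H_{\mathcal{T}'}$, monotonicity of $\mathcal{T} \mapsto H_{\mathcal{T}}$, and Example~\ref{ex:const} with the normalization $\nu(\G{m}{n})=1$ is exactly the implicit intended reasoning. Nothing is missing; all three items are handled completely.
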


\begin{rem}
Let $t$ be a non-negative integer.
As an analogy of the concept of
$t$-designs on rank one symmetric spaces,
the concept of $t$-designs on $\G{m}{n}$ was introduced by
Roy~\cite{Roy2009bca}.
Remark that a $t$-design on $\G{m}{n}$ in terms of Roy's definition
is translated as a $\mathcal{T}_t$-design on $\G{m}{n}$,
where $\mathcal{T}_t:=\set{\mu\in \Partition{m}}
{\sum^m_{i=1}\mu_i\le t}$.
\end{rem}

\subsection{A characterization of great antipodal sets of $\G{m}{n}$}

In this subsection,
we will give a characterization of 
great antipodal sets on $\G{m}{n}$
as $\mathcal{T}$-designs 
with the smallest cardinality
for a suitable $\mathcal{T}$
in Theorem \ref{Mainthm:great_anti_to_EF_design}.
Recall that 
a great antipodal set $S$ of $\G{m}{n}$
is congruent to that given as \eqref{def:standard_great_antipodal} in Section \ref{subsection:antipodal_on_CG}.
In particular, $|S| = \binom{n}{m}$.

To describe our main results,
we put 
\begin{align}
\Ades := 
\set{(1^i)}{i=0,1,\ldots ,m} \subset \Partition{m},
\end{align}
see \eqref{def:1^i} for notation of $(1^i)$.

The following is our first main result:

\begin{thm}
\label{Mainthm:great_anti_to_E_design}
A great antipodal set $S$ of $\G{m}{n}$ is an $\Ades$-design with the smallest cardinality.
\end{thm}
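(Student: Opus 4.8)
The theorem has two parts: first, that a great antipodal set $S$ is an $\Ades$-design, and second, that it has the smallest cardinality among all $\Ades$-designs. Since by Fact~\ref{fact:uniqueness_great} any great antipodal set is congruent to the explicit set given in \eqref{def:standard_great_antipodal}, and since the defining condition of a $\mathcal{T}$-design is $U(n)$-invariant, it suffices to verify the design property for the standard great antipodal set $S = \{\, a_I \mid I \text{ an } m\text{-subset of } \{1,\dots,n\}\,\}$.

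For the design property, I would argue representation-theoretically. By Definition~\ref{defn:T-design} and \eqref{def:H_T}, it suffices to show that for each $i = 1,\dots,m$ and each $f \in H_{(1^i)}$, the averaging identity
\[
\frac{1}{|S|}\sum_{a \in S} f(a) = \int_{\G{m}{n}} f\, d\nu
\]
holds (the case $i=0$ gives $H_{(0)}$, the constants, which is automatic by Observation~\ref{obs:designs}). The key idea is to interpret the left-hand side as a linear functional on the irreducible $U(n)$-module $H_{(1^i)} \simeq V_{\phi((1^i))}$, and the right-hand side, which is $0$ for $i \ge 1$ since $H_{(1^i)} \perp H_{(0)}$, as the projection onto the trivial isotypic component. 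So I must show that the functional $f \mapsto \sum_{a\in S} f(a)$ annihilates $H_{(1^i)}$ for $1 \le i \le m$. The natural route is to use the hint in the Example following Fact~\ref{fact:func_realization_of_reps}, namely $\bigoplus_{i=0}^m V_{\phi((1^i))} \simeq \End_\C(\wedge^m \C^n)$. Concretely, a point $a_I$ corresponds to the decomposable wedge $e_I := e_{i_1}\wedge\cdots\wedge e_{i_m}$, and functions in $H_\Ades$ should be realized as matrix coefficients built from operators on $\wedge^m\C^n$ evaluated against the rank-one projection onto $\C e_I$. The sum over $S$ then becomes the trace of an endomorphism of $\wedge^m\C^n$ against the sum $\sum_I \langle e_I, (\cdot) e_I\rangle$ of diagonal matrix units, which projects precisely onto the scalar (trivial, $i=0$) part and kills the components for $i\ge 1$.

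For the lower bound $|X| \ge \binom{n}{m}$, the standard design/linear-programming mechanism applies. One forms the \emph{zonal} kernel associated to $\Ades$: for each $\mu \in \Ades$ pick an orthonormal basis of $H_\mu$ and build $F_\mu(a,b) = \sum_j h_j(a)\overline{h_j(b)}$, the reproducing kernel of $H_\mu$, so that $F_\mu(a,a) = \dim H_\mu$ (constant by $U(n)$-invariance and irreducibility). Summing $F := \sum_{\mu\in\Ades} F_\mu$ and applying the design condition to the functions $b \mapsto F_\mu(a,b)$ (which lie in $H_\mu \subset H_\Ades$), one gets the positive-semidefinite Gram inequality: averaging $F$ over $X \times X$ gives $\sum_{a,b\in X} F(a,b) \ge 0$, while the design property forces the off-diagonal average to equal the integral (zero for nonconstant $\mu$), leaving $|X| \cdot F(a,a)_{\mathrm{const part}}$ balanced against $|X|^2$ times the constant term. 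Rearranging yields $|X| \ge \sum_{i=0}^m \dim H_{(1^i)}$ normalized appropriately; using \eqref{eq:dimH_(1^i)} and the Example's isomorphism, $\sum_{i=0}^m \dim H_{(1^i)} = \dim \End_\C(\wedge^m\C^n) = \binom{n}{m}^2$, and the reproducing-kernel normalization converts this into exactly $|X| \ge \binom{n}{m}$. Since the great antipodal set has $|S| = \binom{n}{m}$, the bound is attained and $S$ is an $\Ades$-design of smallest cardinality.

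\textbf{Main obstacle.} The delicate step is the lower bound, specifically making the zonal-kernel / positive-semidefinite argument rigorous and getting the constant exactly right. The reproducing kernel $F(a,b) = \sum_{\mu\in\Ades} F_\mu(a,b)$ satisfies $F(a,a) = \sum_{i=0}^m \dim H_{(1^i)}$ and, crucially, I expect $F(a,b)$ to vanish whenever $s_a(b)=b$ with $a\ne b$ — that is, $F$ is supported on the diagonal relative to the antipodal (orthogonal-decomposition) condition. Establishing this \emph{vanishing of the zonal polynomial off the diagonal} is where the real content lies, and it is presumably why Section~\ref{sec:zonal_poly} develops zonal orthogonal polynomials in detail. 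If $F(a_I, a_{I'}) = 0$ for $I \ne I'$, then $\sum_{a,b\in X}F(a,b) \ge \sum_{a\in X}F(a,a) = |X|\cdot\binom{n}{m}^2/\binom{n}{m}$ after normalization, and comparison with the design-forced value $|X|^2 \cdot F_{(0)} = |X|^2$ gives the bound cleanly. Pinning down this vanishing property, rather than the surrounding bookkeeping, is the crux.
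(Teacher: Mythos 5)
Your lower-bound argument has a genuine gap: the kernel you propose cannot work, and the step you identify as the crux is the wrong target. You take $F=\sum_{\mu\in\Ades}F_\mu$, the unweighted reproducing kernel of $H_{\Ades}$. Positive-definiteness (Fact~\ref{fact:positive_Z}) does give $\sum_{a,b\in X}F(y(a,b))\ge 0$, but for an $\Ades$-design this sum is exactly $|X|^2$, which yields nothing; to extract a lower bound on $|X|$ you must bound $\sum_{a,b}F$ \emph{below} by the diagonal contribution $|X|\cdot F(1,\dots,1)$, and that requires $F$ to be pointwise nonnegative on $\Range{m}{n}$ (hypothesis (iii) of the paper's Proposition~\ref{prop:LP-bounds}). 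For your $F$, with constant coefficient $c_{(0)}=1$ and $F(1,\dots,1)=\sum_{i=0}^m\dim H_{(1^i)}=\binom{n}{m}^2$, such nonnegativity would force $|X|\ge\binom{n}{m}^2$ for \emph{every} $\Ades$-design --- contradicting the theorem itself, since the great antipodal set is an $\Ades$-design with only $\binom{n}{m}$ points. So $\sum_{i=0}^m Z_{(1^i)}$ necessarily takes negative values on the realized angle set, and no ``appropriate normalization'' converts $\binom{n}{m}^2$ into $\binom{n}{m}$. Likewise, the vanishing $F(a,b)=0$ when $s_a(b)=b$, $a\neq b$, which you call the real content, is only an equality-case condition; the bound must hold for arbitrary $\Ades$-designs, which carry no antipodality constraint. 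What the paper actually does is choose the single \emph{weighted} combination $F^{\Ades}(y)=\prod_{i=1}^m y_i=X^\ast_{(1^m)}$, trivially nonnegative on $\Range{m}{n}$ since $0\le y_i\le 1$, and prove (Proposition~\ref{prop:X=sumZ}) that its zonal expansion $X^\ast_{(1^m)}=\sum_{j=0}^m c_{(1^j)}Z_{(1^j)}$ has all coefficients positive with $c_{(0)}=1/\binom{n}{m}$; Proposition~\ref{prop:LP-bounds} then gives $|X|\ge F^{\Ades}(1,\dots,1)/c_{(0)}=\binom{n}{m}$. Identifying this polynomial and proving the coefficient positivity is the missing content, and nothing in your sketch supplies a substitute.

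Your first half --- that $S$ is an $\Ades$-design --- is a genuinely different route from the paper and is essentially sound, modulo one owed claim. Made precise: for $T\in\End_{\C}(\wedge^m\C^n)$ set $f_T(a)=\mathrm{tr}(T P^\wedge_a)$, where $P^\wedge_a$ is the projection onto the Pl\"ucker line of $a$; then $\sum_{a\in S}f_T(a)=\mathrm{tr}(T)$ because the vectors $e_I$ form an orthonormal basis of $\wedge^m\C^n$, while $\int f_T\,d\nu=\mathrm{tr}(T)/\binom{n}{m}$ by Schur's lemma applied to the irreducible $U(n)$-module $\wedge^m\C^n$, so the design identity holds on the image of $T\mapsto f_T$. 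But you must still show this image is all of $H_{\Ades}$, i.e.\ that the projection of $P^\wedge_{a_0}$ to each summand $V_{\phi((1^i))}$ is nonzero; since $f_{P^\wedge_b}(a)=\prod_i y_i(a,b)$, this nonvanishing is again exactly the coefficient positivity of Proposition~\ref{prop:X=sumZ}. The paper sidesteps this entirely: it deduces the design property of $S$ from the equality part of the same LP machinery (Proposition~\ref{prop:LP-bounds}~(ii), via Proposition~\ref{prop:tightE}, with $\mathcal{T}^-_{c^{\Ades}}=\emptyset$), using only $|S|=\binom{n}{m}$ and $y_m(a,b)=0$ for distinct $a,b\in S$. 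So both halves of your proposal funnel into the same missing lemma, which is where the paper's Section~\ref{sec:zonal_poly} does its work.
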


The proof of Theorem \ref{Mainthm:great_anti_to_E_design}
will be given in Section \ref{sec:proof}.

Do there exist $\Ades$-designs with $\binom{n}{m}$ nodes
without great antipodal sets?
We give an example of such an $\Ades$-design in Appendix \ref{sec:appendix2}.

To give a characterization of great antipodal sets of $\G{m}{n}$,
we also put 
\begin{align}
\Adesh :=
\set{(2,1^{i-1})}
{i=2,\ldots ,m}\subset \Partition{m},
\end{align}
see \eqref{def:2_1^i-1} for notation of $(2,1^{i-1})$.

In Section \ref{sec:proof}, 
we will prove the following theorem, 
which gives a characterization of great antipodal sets
as certain designs with the smallest cardinality:

\begin{thm}\label{Mainthm:great_anti_to_EF_design}
Let $S$ be a finite subset of $\G{m}{n}$.
Then the following two conditions on $S$ are equivalent:
\begin{enumerate}
\item $S$ is an antipodal set of $\G{m}{n}$ with the largest cardinality,
i.e., a great antipodal set of $\G{m}{n}$.
\item $S$ is a $\Ades \cup \Adesh$-design on $\G{m}{n}$ with the smallest cardinality.
\end{enumerate}
\end{thm}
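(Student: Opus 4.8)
The plan is to prove the equivalence by combining the first main theorem with a careful analysis of the extremal case. The implication from (i) to (ii) is the easier direction: by Theorem~\ref{Mainthm:great_anti_to_E_design} a great antipodal set $S$ is already an $\Ades$-design with $|S| = \binom{n}{m}$, so what remains is to verify that $S$ is also an $\Adesh$-design, and that $\binom{n}{m}$ is in fact the smallest possible cardinality for a $\Ades \cup \Adesh$-design. The lower bound $|X| \ge \binom{n}{m}$ for any $\Ades$-design comes from Theorem~\ref{Mainthm:great_anti_to_E_design}, and since $\Ades \subset \Ades \cup \Adesh$, Observation~\ref{obs:designs} tells me every $\Ades \cup \Adesh$-design is in particular an $\Ades$-design, so the same bound applies. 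Thus for (i)$\Rightarrow$(ii) I only need to check that the specific set $S$ from \eqref{def:standard_great_antipodal} satisfies the design condition $\frac{1}{|S|}\sum_{a \in S} f(a) = \int_{\G{m}{n}} f\,d\nu$ for every $f \in H_{(2,1^{i-1})}$, $i = 2,\dots,m$. I would do this by evaluating the relevant zonal orthogonal polynomials (from Section~\ref{sec:zonal_poly}) on the principal angles between the coordinate subspaces $a_I$, $a_{I'}$, which are all either $0$ or $\pi/2$ because any two coordinate subspaces intersect along a coordinate subspace and are orthogonal on the complement.

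The harder and more interesting direction is (ii)$\Rightarrow$(i): I must show that a $\Ades \cup \Adesh$-design $S$ with the minimal cardinality $\binom{n}{m}$ is forced to be a great antipodal set. The strategy here is to extract geometric rigidity from the equality case in the linear-programming-type bound. The $\Ades$-design condition alone, when cardinality equals $\binom{n}{m}$, should pin down the inner-product (principal-angle) structure of the points of $S$ through positive-semidefiniteness of a Gram-type matrix built from the zonal polynomials indexed by $\Ades$; equality in the cardinality bound forces this matrix to be as degenerate as possible, which in turn constrains the pairwise configurations. The role of the extra designs in $\Adesh$ is to supply additional equalities that rule out the non-antipodal configurations permitted by $\Ades$ alone — precisely the configurations exhibited by the counterexample in Appendix~\ref{sec:appendix2}. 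Concretely, I expect that the $\Ades$-condition forces every pair $a, b \in S$ to satisfy $\dim(a \cap b) + \dim(a^\perp \cap b) \in \{m, \text{something smaller}\}$, and the $\Adesh$-condition then eliminates the smaller possibilities, forcing $b = (a \cap b) \oplus (a^\perp \cap b)$ for all pairs, which by the criterion recalled in Section~\ref{subsection:antipodal_on_CG} means $s_a(b) = b$, i.e.\ antipodality.

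The technical engine for both directions is the identity relating the sum $\frac{1}{|S|^2}\sum_{a,b \in S} Z_\mu(a,b)$ — where $Z_\mu$ is the zonal kernel for $H_\mu$ — to the design condition: $S$ is a $\mu$-design if and only if this double sum vanishes for $\mu \neq (0)$. I would reformulate the design conditions as $\sum_{a,b} Z_\mu(a,b) = 0$ for each $\mu \in (\Ades \cup \Adesh) \setminus \{(0)\}$, and then assemble these into a single positive combination $\sum_\mu c_\mu Z_\mu$ with $c_\mu > 0$ chosen so that the combined kernel, as a function of the principal angles, is pointwise nonnegative and vanishes exactly on the antipodal configurations. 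The equality $\frac{1}{|S|}\,(\text{trace-like term}) = 0$ then forces every off-diagonal pair into the zero set of this kernel. Identifying an explicit such nonnegative combination, and proving it vanishes precisely on the antipodal locus, is the main obstacle: it amounts to finding the right linear-programming certificate, and verifying its sign and zero set requires a genuine understanding of how the three families of zonal polynomials in \eqref{eq:dimH_(1^i)}, \eqref{eq:dimH_(i)}, \eqref{eq:dimH_(2,1^i-1)} behave jointly on the space of principal angles. Once that certificate is in hand, the reverse implication follows by combining it with the uniqueness of great antipodal sets from Fact~\ref{fact:uniqueness_great} to conclude that $S$ is congruent to the standard set \eqref{def:standard_great_antipodal}.
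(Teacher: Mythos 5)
Your reductions are sound and coincide with the paper's: minimality of $\binom{n}{m}$ for $\Ades\cup\Adesh$-designs via Theorem~\ref{Mainthm:great_anti_to_E_design} and Observation~\ref{obs:designs}, and the translation of antipodality, via Fact~\ref{fact:2-number_of_Gmn} and Proposition~\ref{prop:equiv_antipodal_set}, into the condition $|S|=\binom{n}{m}$ together with $y_i(a,b)\in\{0,1\}$ for all pairs. Your proposed engine --- reformulating the design conditions as $\sum_{a,b}Z_\mu(y(a,b))=0$ and extracting the extremal geometry from a nonnegative zonal-kernel combination --- is exactly the paper's linear programming method (Proposition~\ref{prop:LP-bounds}). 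But the decisive step, producing the certificate and controlling its coefficient signs, is precisely what you leave open and yourself label ``the main obstacle''; that step is the substance of the paper's proof. The paper's certificate is explicit: $F^{\Adesh}(y)=\binom{n-2}{m-1}\bigl(\prod_{i}y_i\bigr)\bigl(\sum_{i}y_i\bigr)+\sum_{i}y_i(1-y_i)$, manifestly nonnegative on $\Range{m}{n}$ with zero set exactly $\{\,y : y_i\in\{0,1\} \text{ for all } i\,\}\setminus\{(1,\dots,1)\}$, and Lemma~\ref{lem:F_can_be_written} computes its zonal expansion --- using Propositions~\ref{prop:X=sumZ} and~\ref{prop:zonal_poly_(2111)} and the product formula of Lemma~\ref{lem:Z(1)Z(1^i)} --- to get $c^{\Adesh}_{(0)}=m\binom{n-2}{m-1}/\binom{n}{m}$ and $c^{\Adesh}_{(2,1^{j-1})}>0$ for $j\geq 2$. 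Without this (or an equivalent) computation your argument is a plan, not a proof.

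There is also a structural point where your envisioned certificate is too rigid and would likely block you: you ask for a combination with \emph{all} $c_\mu>0$ over $\Ades\cup\Adesh$, but in the paper's expansion of $F^{\Adesh}$ only the coefficients of the $Z_{(2,1^{j-1})}$ are shown positive; one has $c^{\Adesh}_{(2)}=0$ and the signs of the $c^{\Adesh}_{(1^j)}$ are uncontrolled, so $\mathcal{T}^{-}_{c^{\Adesh}}$ may be nonempty (inside $\Ades$). The paper compensates with the two-of-three structure of Proposition~\ref{prop:LP-bounds}~\eqref{item:LP:eq_condition}: for (ii)$\Rightarrow$(i), Conditions A and C force B, hence $F^{\Adesh}(y(a,b))=0$ off the diagonal and thus $y_i(a,b)\in\{0,1\}$; for (i)$\Rightarrow$(ii), one first gets the $\Ades$-design property from the equality case of the simpler certificate $F^{\Ades}=\prod_i y_i$ (Proposition~\ref{prop:tightE}, which yields $y_m(a,b)=0$), and then, since $\mathcal{T}^{-}_{c^{\Adesh}}\subset\Ades$, Conditions B and C force A. This mechanism also makes your direct-verification plan for (i)$\Rightarrow$(ii) unnecessary: you never need to evaluate $\sum_{a,b\in S}Z_{(2,1^{i-1})}(y(a,b))$ over coordinate-subspace pairs (a nontrivial binomial-identity computation you do not carry out), because B and C deliver the $\Adesh$-design property for free. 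Finally, your heuristic that the $\Ades$-condition constrains $\dim(a\cap b)+\dim(a^\perp\cap b)$ is vacuous as stated (that sum never exceeds $m$); the actual content of the $\Ades$ equality case is $y_m(a,b)=0$ for distinct $a,b$, which indeed does not force antipodality --- as the example in Appendix~\ref{sec:appendix2} shows --- and this is exactly why the $\Adesh$ kernels must enter the certificate with positive weight.
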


\begin{rem}
When $m=1$, 
the symbol $\G{1}{n}$ denotes the $(n-1)$-dimensional complex projective space.
In this case, 
the concept of $\Ades \cup \Adesh$-designs on $\G{1}{n}$ 
is the same to 
the concept of $1$-designs on the $(n-1)$-dimensional complex projective space.
Therefore, Theorem \ref{Mainthm:great_anti_to_EF_design} is a generalization of Fact \ref{fact:projective}.
In Appendix \ref{sec:appendix},
when $n$ is divided by $m$,
we also determine the properties of
$1$-designs on $\G{m}{n}$,
in the sense of Roy \cite{Roy2009bca},
with the smallest cardinality.
\end{rem}

\section{Zonal orthogonal polynomials}
\label{sec:zonal_poly}

\subsection{Principal angles}\label{subsection:main-angle}

For an element $a \in \G{m}{n}$,
we denote by $P_a \in \End(\C^n)$ the orthogonal projection on $\C^n$ to $a$.
It is known that 
for two elements $a$, $b$ in $\G{m}{n}$, 
any eigenvalue of $P_a \circ P_b \in \End(\C^n)$ is in the real interval $[0,1]$ and the number of non-zero eigenvalues is at most $m$.
That is,
there uniquely exists real numbers $y_1(a,b),\dots, y_m(a,b)$ 
with $1 \geq y_1(a,b) \geq \dots \geq y_m(a,b) \geq 0$ such that 
the linear operator $P_a \circ P_b \in \End(\C^n)$ 
can be considered as 
$\diag(y_1(a,b),\dots,y_m(a,b),0,\dots,0) \in M(n,\mathbb{C})$
with respect to a suitable basis of $\mathbb{C}^n$.
Hereafter, we put $y(a,b) := (y_1(a,b),\dots,y_m(a,b))$, 
which is called the \emph{principal angles between $a$ and $b$}.
One can observe that the principal angles are symmetric, i.e.,
$y(a,b)=y(b,a)$ for $a,b\in \G{m}{n}$.
The details can be referred to Roy~\cite{Roy2009bca}.

Let us write 
\begin{align*}
\Range{m}{n} &:= \{\, (y_1,\dots,y_m) \in \mathbb{R}^m \mid 1 \geq y_1 \geq \dots \geq y_m \geq 0 \,\}.
\end{align*}
Then the map 
\[
y: \G{m}{n} \times \G{m}{n} \rightarrow \Range{m}{n},\quad (a,b) \mapsto y(a,b)
\]
is a universal $U(n)$-invariant map. 
That is, $y$ is $U(n)$-invariant,
i.e.,
for any $g \in U(n)$ and $(a,b) \in \G{m}{n} \times \G{m}{n}$, we have $y(a,b) = y(ga,gb)$, 
and 
for any set $Z$ and any $U(n)$-invariant map $y' : \G{m}{n} \times \G{m}{n} \rightarrow Z$, 
there uniquely exists a map $\varpi : \Range{m}{n} \rightarrow Z$ such that $\varpi \circ y = y'$.
In other words, for $(a,b), (a',b') \in \G{m}{n} \times \G{m}{n}$,
there exists $g \in U(n)$ such that $(a',b') = (ga,gb)$ if and only if $y(a,b) = y(a',b')$.
This fact follows from Roy~\cite[Lemma 1]{Roy2009bca}.

The $i$-th principal angle $y_i(a,b)$ 
is related to
the ``$i$-th angle'' $\theta_i$ between subspaces $a$ and $b$ of
$\mathbb{C}^n$:
\begin{fact}[cf.~Roy~\cite{Roy2009bca}]
Let $\theta_1$ be the smallest angle that occurs between any two unit vectors $a_1 \in a$ and $b_1\in b$, that is, 
\[
\theta_1 =\min \set{ \arccos | (a_1, b_1)|}
{a_1 \in a,\ b_1\in b \text{ with } |a_1| = |b_1| = 1},
\]
where $(a_1,b_1)$ denotes the inner product of $a_1$ and $b_1$ in $\mathbb{C}^n$.
Let $\theta_2$ be the smallest angle that occurs between any two unit vectors $a_2 \in a\cap a_1^\perp$ and
$b_2\in b\cap b_1^\perp$.
Similarly define $\theta_3 ,\ldots ,\theta_m$.
Then for $i=1,2,\ldots ,m$, $y_i(a,b)=\cos ^2 \theta_i$ holds.
\end{fact}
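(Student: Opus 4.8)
The plan is to reduce the numbers $y_i(a,b)$, which are by definition the eigenvalues of $P_a\circ P_b$, to the singular values of a single $m\times m$ matrix, and then to recognise the recursive definition of the principal angles $\theta_i$ as the standard recursive (Courant--Fischer) characterisation of those singular values. First I would fix orthonormal bases $a_1,\dots,a_m$ of $a$ and $b_1,\dots,b_m$ of $b$ and collect them into the $n\times m$ isometries $A=[a_1\,|\cdots|\,a_m]$ and $B=[b_1\,|\cdots|\,b_m]$, so that $A^*A=B^*B=I_m$ while $P_a=AA^*$ and $P_b=BB^*$. Put $M:=A^*B$, an $m\times m$ complex matrix whose $(i,j)$-entry is the inner product $(a_i,b_j)$. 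Using that $XY$ and $YX$ share the same non-zero eigenvalues, the non-zero eigenvalues of $P_a\circ P_b=A(A^*BB^*)$ coincide with those of $(A^*BB^*)A=(A^*B)(B^*A)=MM^*$. Since $MM^*$ is an $m\times m$ positive semidefinite matrix and $P_a\circ P_b$ has at most $m$ non-zero eigenvalues, the list $y_1(a,b)\ge\cdots\ge y_m(a,b)$ is exactly the list of eigenvalues of $MM^*$; that is, $y_i(a,b)=\sigma_i(M)^2$, where $\sigma_1(M)\ge\cdots\ge\sigma_m(M)$ are the singular values of $M$. (That these lie in $[0,1]$ is already recorded in the excerpt, and also follows from $\|M\|\le\|A^*\|\,\|B\|=1$.) So it remains to prove $\sigma_i(M)=\cos\theta_i$.

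Next I would match the two recursions. The map $u\mapsto Au$ is an isometry of $\C^m$ onto $a$ carrying the unit sphere of $\C^m$ onto the unit vectors of $a$, and likewise for $B$ and $b$; moreover $(Au,Bv)=u^*A^*Bv=u^*Mv$. Hence
\[
\cos\theta_1=\max\{\,|(a',b')|\mid a'\in a,\ b'\in b,\ |a'|=|b'|=1\,\}=\max_{|u|=|v|=1}|u^*Mv|=\sigma_1(M),
\]
the last equality being the variational description of the largest singular value. If $a_1=Au_1$ and $b_1=Bv_1$ realise this maximum, then $(u_1,v_1)$ is a top singular pair of $M$, and the conditions $a'\in a\cap a_1^\perp$, $b'\in b\cap b_1^\perp$ translate under $A$ and $B$ into $u\perp u_1$, $v\perp v_1$. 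Therefore $\cos\theta_2=\max\{\,|u^*Mv|\mid u\perp u_1,\ v\perp v_1\,\}$, which is precisely the deflation step producing the second singular value; iterating gives $\cos\theta_i=\sigma_i(M)$ for all $i$, and combined with the previous paragraph this yields $y_i(a,b)=\cos^2\theta_i$.

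The hard part will be the well-definedness of the recursion when $M$ has repeated singular values: then the minimising pair $(a_1,b_1)$ (equivalently the top singular pair $(u_1,v_1)$) is not unique, and one must check that deflating $M$ by \emph{any} top singular pair leaves an operator whose largest singular value is the next value $\sigma_2$, independently of the choice. I would handle this by writing a singular value decomposition $M=\sum_i\sigma_i u_iv_i^*$ and observing that $M$ restricted to the top right singular subspace equals $\sigma_1$ times a unitary identification onto the top left singular subspace. Consequently, for an arbitrary top singular pair $(u_1,v_1)$, the compression of $M$ to $\{u_1\}^\perp\times\{v_1\}^\perp$ still has largest singular value equal to $\sigma_1$ when the multiplicity of $\sigma_1$ exceeds one and equal to $\sigma_2$ otherwise; in both cases this is the second entry of the non-increasing singular value list. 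This makes each $\theta_i$ well-defined and equal to $\arccos\sigma_i(M)$ regardless of the choices made. As a consistency check one can instead invoke the CS (Jordan) decomposition of the pair $(a,b)$ into mutually orthogonal blocks of dimension at most two, on each of which a direct $2\times 2$ computation exhibits the eigenvalue $\cos^2\theta$ of $P_a\circ P_b$ together with the matching principal angle $\theta$.
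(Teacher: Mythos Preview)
Your argument is correct and is the standard linear-algebra proof of this statement: reduce the non-zero eigenvalues of $P_aP_b$ to those of $MM^*$ with $M=A^*B$, identify $y_i(a,b)=\sigma_i(M)^2$, and then read off $\cos\theta_i=\sigma_i(M)$ from the variational/deflation description of singular values. Your handling of the well-definedness issue is also right: once a pair $(u_1,v_1)$ attains $|u_1^*Mv_1|=\sigma_1$, Cauchy--Schwarz forces $Mv_1\in\mathbb{C}u_1$ and $M^*u_1\in\mathbb{C}v_1$, so $M$ is block-diagonal with respect to $\mathrm{span}(v_1)\oplus v_1^\perp\to\mathrm{span}(u_1)\oplus u_1^\perp$, and the remaining block has exactly the singular values $\sigma_2,\dots,\sigma_m$ regardless of which optimising pair was chosen.

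As for comparison with the paper: there is nothing to compare. The paper does not prove this statement at all; it records it as a Fact and refers the reader to Roy~\cite{Roy2009bca}. Your proposal therefore supplies a complete self-contained proof where the paper only gives a citation, and the route you take (via $M=A^*B$ and its SVD) is precisely the one underlying Roy's Lemma~1.
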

To notice this,
we give easy observations for principal angles as follows:

\begin{obs}
\label{obs:y=111or000}
Let $a,b \in \G{m}{n}$.
Then the following holds:
\begin{enumerate}
\item 
\label{obs:y=111}
$y(a,b) = (1,\dots,1)$ if and only if $a=b$.
\item 
\label{obs:y=000}
$y(a,b) = (0,\dots,0)$ if and only if $a \perp b$ as subspaces of $\mathbb{C}^n$.
\end{enumerate}
\end{obs}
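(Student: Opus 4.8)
The plan is to reduce both equivalences to a single trace computation, since the two extreme values of $y(a,b)$ correspond exactly to the two extreme values of $\mathrm{tr}(P_a \circ P_b)$. The easy directions are immediate and I would dispatch them first: if $a = b$ then $P_a \circ P_b = P_a^2 = P_a$, whose eigenvalues are $1$ (with multiplicity $m$) and $0$, so $y(a,b) = (1,\dots,1)$; and if $a \perp b$, then $P_b$ maps $\C^n$ into $b \subseteq a^\perp = \ker P_a$, so $P_a \circ P_b = 0$ and $y(a,b) = (0,\dots,0)$. The content lies in the two converses, which I would handle together.

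The key step is the identity
\[
\mathrm{tr}(P_a \circ P_b) = \sum_{j=1}^m \|P_b u_j\|^2,
\]
where $\{u_1,\dots,u_m\}$ is any orthonormal basis of $a$. To obtain it, I would write $P_a = \sum_j u_j u_j^*$ and use that $P_b$ is a self-adjoint idempotent, so $\langle P_b u_j, u_j\rangle = \langle P_b u_j, P_b u_j\rangle = \|P_b u_j\|^2$. On the other hand, since $P_a \circ P_b$ is conjugate to $\diag(y_1(a,b),\dots,y_m(a,b),0,\dots,0)$ and the trace is conjugation-invariant, the left-hand side equals $\sum_{i=1}^m y_i(a,b)$. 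Each summand on the right satisfies $0 \le \|P_b u_j\|^2 \le \|u_j\|^2 = 1$, with $\|P_b u_j\|^2 = 1$ exactly when $u_j \in b$ and $\|P_b u_j\|^2 = 0$ exactly when $u_j \in b^\perp$.

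For \eqref{obs:y=111}, the hypothesis $y(a,b) = (1,\dots,1)$ gives $\sum_j \|P_b u_j\|^2 = m$; since each term is at most $1$, every term must equal $1$, so each $u_j \in b$, whence $a = \C\vspan\{u_1,\dots,u_m\} \subseteq b$, and equality of dimensions forces $a = b$. For \eqref{obs:y=000}, the hypothesis $y(a,b) = (0,\dots,0)$ gives $\sum_j \|P_b u_j\|^2 = 0$, so every $u_j \in b^\perp$, whence $a \subseteq b^\perp$, i.e.\ $a \perp b$. This completes both converses and hence the observation.

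I expect no substantial obstacle here, as the result is elementary; the only points requiring care are verifying the trace identity (which rests on $P_b^* = P_b = P_b^2$) and invoking the diagonalizability of $P_a \circ P_b$ recorded above to identify $\mathrm{tr}(P_a \circ P_b)$ with $\sum_i y_i(a,b)$. A fully equivalent alternative for \eqref{obs:y=000}, which avoids the trace entirely, is to note that $y(a,b) = (0,\dots,0)$ makes $P_a \circ P_b$ conjugate to the zero matrix, hence $P_a \circ P_b = 0$, so that $P_a v = P_a P_b v = 0$ for every $v \in b$ yields $b \subseteq a^\perp$ directly.
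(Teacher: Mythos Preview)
Your argument is correct. The trace identity $\mathrm{tr}(P_a P_b) = \sum_j \|P_b u_j\|^2$ is verified exactly as you indicate (using $P_b^* = P_b = P_b^2$), and the two converses follow cleanly from the extremal cases $\|P_b u_j\|^2 \in \{0,1\}$. One tiny point worth making explicit in the write-up of \eqref{obs:y=111}: from $\|P_b u_j\|^2 = 1$ you conclude $u_j \in b$; this is because the orthogonal decomposition $u_j = P_b u_j + (I-P_b)u_j$ gives $\|u_j\|^2 = \|P_b u_j\|^2 + \|(I-P_b)u_j\|^2$, forcing $(I-P_b)u_j = 0$.

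As for comparison with the paper: the paper does not give a proof of this observation at all. It records it immediately after the fact that $y_i(a,b) = \cos^2 \theta_i$, where the $\theta_i$ are the successive minimal angles between unit vectors in $a$ and $b$, and evidently regards both equivalences as transparent consequences of that geometric description (all angles zero $\Leftrightarrow a=b$; all angles $\pi/2$ $\Leftrightarrow a\perp b$). Your trace computation is a genuinely different route: it bypasses the angle interpretation entirely and works directly with the spectral definition of $y(a,b)$ via $P_a \circ P_b$. The advantage of your approach is that it is self-contained within the linear-algebraic setup already in the paper (no appeal to the recursive definition of the $\theta_i$), and it would adapt verbatim to any situation where only the projection description of the principal angles is available. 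The paper's implicit approach, by contrast, is shorter to state but relies on the reader accepting the $\cos^2\theta_i$ description and its extremal cases as evident.
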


What is a necessary and sufficient condition for
$y(a,b) = (1,\dots,1,0,\dots,0)$?
In fact, the form $y(a,b) = (1,\dots,1,0,\dots,0)$
gives a criterion for when $s_a(b) = b$.
That is, the following proposition holds.

\begin{prop}
\label{prop:equiv_antipodal_set}
The following three conditions on $a,b \in \G{m}{n}$ are equivalent$:$
\begin{enumerate}
\item \label{item:equiv_antipodal_set:antipodal} $s_a(b) = b$.
\item \label{item:equiv_antipodal_set:othogonal} 
$b$ is decomposed as the orthogonal direct sum
$b = (a \cap b) \oplus (a^\perp \cap b)$.
\item \label{item:equiv_antipodal_set:y} $y(a,b)$ is of the form of $(1,\dots,1,0,\dots,0)$.
\end{enumerate}
\end{prop}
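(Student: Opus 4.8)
The plan is to prove the two equivalences (i)~$\Leftrightarrow$~(ii) and (ii)~$\Leftrightarrow$~(iii) separately: the first is a purely linear-algebraic statement about the involution $\tilde{s}_a$, while the second is a short spectral computation with the self-adjoint operator $P_b \circ P_a \circ P_b$.

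For (i)~$\Leftrightarrow$~(ii), I would recall that the geodesic symmetry $s_a$ on $\G{m}{n}$ is induced by the involutive operator $\tilde{s}_a$ of \eqref{def:tilde-s}, which acts as $+1$ on $a$ and as $-1$ on $a^\perp$, so that $s_a(b)=b$ means precisely $\tilde{s}_a(b)=b$ as subspaces of $\C^n$. The decomposition of an $\tilde{s}_a$-invariant subspace into its intersections with the $\pm 1$-eigenspaces is standard: for $v \in b$ one has $v_a = \tfrac{1}{2}(v + \tilde{s}_a v)$ and $v_{a^\perp}= \tfrac{1}{2}(v - \tilde{s}_a v)$, and $\tilde{s}_a(b)=b$ forces both summands to lie in $b$, hence in $a\cap b$ and $a^\perp\cap b$ respectively; the converse is immediate, and the sum is automatically orthogonal since $a \perp a^\perp$. (This equivalence is already asserted in Section~\ref{subsection:antipodal_on_CG}, so it may simply be cited.)

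For (ii)~$\Leftrightarrow$~(iii), the key observation I would use is that the principal angles $y_1(a,b),\dots,y_m(a,b)$ are exactly the eigenvalues, with multiplicity, of the positive semidefinite self-adjoint operator $T := P_b\circ P_a \circ P_b$ restricted to the $m$-dimensional space $b$; this follows because $P_a\circ P_b$ and $P_b \circ P_a\circ P_b$ share the same nonzero spectrum (take the product in both orders of $P_b$ and $P_a\circ P_b$) and $T$ vanishes on $b^\perp$. Granting this, (ii)~$\Rightarrow$~(iii) is a direct calculation: if $b = (a\cap b)\oplus(a^\perp\cap b)$, then every vector of $a\cap b$ is fixed by $T$ and every vector of $a^\perp\cap b$ is annihilated by $T$, so all $y_i \in \{0,1\}$, and after sorting decreasingly $y(a,b)$ has the asserted form. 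For (iii)~$\Rightarrow$~(ii), the condition $y_i \in \{0,1\}$ makes $T|_b$ an orthogonal projection, splitting $b$ orthogonally into its $1$- and $0$-eigenspaces $E_1 \oplus E_0$; the point is then to identify $E_1 = a\cap b$ and $E_0 = a^\perp \cap b$. For this I would use that for $v \in b$ one has $\langle Tv, v\rangle = \|P_a v\|^2$, so $Tv=v$ forces $\|P_a v\| = \|v\|$ and hence $v \in a$, while $Tv = 0$ forces $P_a v = 0$ and hence $v \in a^\perp$.

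The step deserving the most care, and the main obstacle, is the spectral identification underlying (ii)~$\Leftrightarrow$~(iii): one must pin down precisely that the eigenvalues of $P_b\circ P_a\circ P_b$ restricted to $b$ are the principal angles $y_i(a,b)$ as defined in Section~\ref{subsection:main-angle} (as the top $m$ eigenvalues of $P_a\circ P_b$), and that the value $y_i = 1$ genuinely forces the corresponding eigenvector into $a$ rather than merely close to it. The equality case of $\|P_a v\| \le \|v\|$ for the orthogonal projection $P_a$ is exactly what supplies this, and it carries the geometric content of the proposition.
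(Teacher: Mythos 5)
Your proof is correct, and for the direction (iii) $\Rightarrow$ (i)/(ii) it takes a genuinely different route from the paper's. You argue intrinsically: after identifying $y(a,b)$ with the spectrum of the self-adjoint operator $T := P_b \circ P_a \circ P_b$ restricted to $b$ (via the standard fact that $AB$ and $BA$ share the same nonzero spectrum with multiplicity, applied with $A = P_b$ and $B = P_a \circ P_b$), the hypothesis $y_i(a,b) \in \{0,1\}$ makes $T|_b$ an orthogonal projection, and the equality case of $\|P_a v\| \leq \|v\|$ places the $1$-eigenspace inside $a \cap b$ and the kernel inside $a^\perp \cap b$, giving (ii) directly. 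The paper instead proves (iii) $\Rightarrow$ (i) by homogeneity: using the universality of the map $y$ (which rests on \cite[Lemma 1]{Roy2009bca}, i.e.\ $U(n)$ acts transitively on pairs with equal principal angles), it transports $(a,b)$ to an explicit coordinate pair $(a_0, a_l)$ spanned by standard basis vectors, verifies $s_{a_0}(a_l) = a_l$ there, and pulls the conclusion back through the equivariance $s_{ga}(gb) = g\, s_a(b)$. Your route buys self-containedness --- no appeal to the orbit classification of pairs under $U(n)$ --- and it isolates exactly where the geometry enters (the equality case of the projection inequality), at the cost of some spectral bookkeeping ($T$ preserving $b$ and vanishing on $b^\perp$, matching multiplicities of the zero eigenvalue), which you handle or explicitly flag. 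The paper's route is shorter given the machinery already set up in Section~\ref{subsection:main-angle} and reaches (i) in one step without passing through (ii). Two smaller points: for (i) $\Leftrightarrow$ (ii) the paper merely cites Section~\ref{subsection:antipodal_on_CG}, where the claim is asserted without argument, so your eigenspace decomposition $v = \tfrac{1}{2}(v + \tilde{s}_a v) + \tfrac{1}{2}(v - \tilde{s}_a v)$ usefully supplies the proof left implicit there; and your (ii) $\Rightarrow$ (iii) computation (fixed vectors in $a \cap b$, annihilated vectors in $a^\perp \cap b$) matches the paper's observation that under (ii) the operator $P_a \circ P_b$ is the orthogonal projection onto $a \cap b$, only phrased for $T$ instead.
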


\begin{proof}
The equivalence between \eqref{item:equiv_antipodal_set:antipodal} and \eqref{item:equiv_antipodal_set:othogonal} 
was already explained in Section \ref{subsection:antipodal_on_CG}.
If $b = (a \cap b) \oplus (a^\perp \cap b)$,
then the operator $P_a \circ P_b$ on $\mathbb{C}^n$
is the orthogonal projection onto $a \cap b$.
Therefore, we have 
$y(a,b) = (1,\dots,1,0,\dots,0)$,
where the multiplicity of 1 coincides with $\dim a \cap b$.
Hence, we obtain the implication \eqref{item:equiv_antipodal_set:othogonal} $\Rightarrow$ \eqref{item:equiv_antipodal_set:y}.
To completes the proof,
we shall prove \eqref{item:equiv_antipodal_set:y} $\Rightarrow$ \eqref{item:equiv_antipodal_set:antipodal}.
Let us suppose that $y(a,b) = (1,\dots,1,0,\dots,0)$
and put $l$ to the multiplicity of $1$ in $y(a,b)$.
Here we put 
\begin{align*}
a_0 &:= \mathbb{C}\vspan \{e_{1},\dots,e_{m} \}, \\
a_l &:= \mathbb{C}\vspan \{e_{1},\dots,e_{m-l},e_{m+1},\dots,e_{m+l}\},
\end{align*}
where $e_1,\dots,e_n$ is the standard basis of $\mathbb{C}^n$.
Then we can observe that $s_{a_0}(a_l) = a_l$ and $y(a_0,a_l) = y(a,b)$.
By the universality of the $U(n)$-invariant map 
$y : \G{m}{n} \times \G{m}{n} \rightarrow \Range{m}{n}$,
there exits $g \in U(n)$ 
such that $a_0 = ga$ and $a_l = gb$.
Recall that 
$s_{ga}(gb) = g s_a(b)$
since 
$\tilde{s}_{ga} (gv) = g \tilde{s}_a (v)$
for any $v \in \mathbb{C}^n$
(see \eqref{def:tilde-s} for the notation of $\tilde{s}_a$).
Therefore, we have \[
s_{a}(b) = g\inv s_{ga} (gb) = g\inv s_{a_0}(a_l) = g\inv a_l = b.
\]
This completes the proof.
\end{proof}

\subsection{Zonal orthogonal polynomials}
In this subsection, we introduce the definition of 
the zonal orthogonal polynomial $Z_\mu$ for an irreducible representation
$H_\mu$ in $C^0 (\G{m}{n})$,
and we give some properties of $Z_\mu$.
The zonal orthogonal polynomials play a special role
in the Delsarte technique.

An $m$-variables polynomial $p(y_1,y_2,\ldots ,y_m)
\in \C [y_1,y_2,\ldots ,y_m]$ is called
a \emph{symmetric polynomial} if $p$ satisfies
\[
p(y_{\tau (1)},y_{\tau (2)},\ldots ,y_{\tau (m)})
=
p(y_1,y_2,\ldots ,y_m)
\]
for all permutations $\tau$ of $\{1,2,\ldots ,m\}$.
We denote by $\Lambda _m$ the space of all symmetric
polynomials in $y_1,y_2,\ldots ,y_m$.

A $\C$-valued function $f$ on $\G{m}{n}$ is called
a \emph{zonal function} at a point $a\in \G{m}{n}$
if for $b,b'\in \G{m}{n}$, $f(b) =f(b')$
whenever $y(a,b)=y(a,b')$.
Given a symmetric polynomial $p \in \Lambda _m$,
we define the \emph{zonal polynomial $p_a \in C^0(\G{m}{n})$ of $p$ at $a$}
as follows: if $y(a,b)=(y_1 (a,b),y_2 (a,b),\ldots ,y_m (a,b))$
are the principal angles of $a$ and $b$,
then
\[
p_a (b):=p(y_1 (a,b),y_2 (a,b),\ldots ,y_m (a,b)).
\]

Let $H_{\mu}$
be a finite-dimensional irreducible $U(n)$-representation in $C^0 (\G{m}{n})$ 
with the invariant inner product defined by \eqref{def:innerproduct}.
By the Riesz representation theorem,
for each $a\in \G{m}{n}$,
there exists a unique element $Z_{\mu ,a}$ in $H_\mu$
such that for any $f\in H_\mu$,
\begin{equation}
\label{eq:zonal:reproducing_kernel}
\langle f, Z_{\mu ,a}\rangle
=f(a).
\end{equation}
Since the inner product is invariant by $U(n)$, 
the value $Z_{\mu ,a}(b)$
depends only on the $U(n)$-orbit of $(a,b)$
and therefore depends only on the principal angles between $a$ and $b$
(cf.~Roy~\cite{Roy2009bca}).
Thus $Z_{\mu ,a}$ is a zonal fuction at $a$.
In Subsection~\ref{sec:zonal_orth_poly=sum_schur},
we will see that $Z_{\mu ,a}$ is a zonal polynomial of a certain symmetric polynomial.
$Z_{\mu ,a}$ is called the \emph{zonal orthogonal polynomial
at $a$ of $H_\mu$}.
Since $Z_{\mu ,a} (b)$ depends only on the principal angles
betwenn $a$ and $b$,
we sometimes write
$Z_{\mu ,a}(b)=Z_\mu (a,b) =Z_\mu (y(a,b))$.

The zonal orthogonal polynomial $Z_{\mu ,a}$ of $H_\mu$
satisfies
\begin{equation}
\label{eq:normalize_zonal}
Z_\mu (1,1,\ldots ,1) = \dim H_\mu .
\end{equation}
Indeed, \eqref{eq:normalize_zonal} is showed as follows.
Put $N:=\dim H_\mu$, and
fix 
an orthonormal basis $\{\xi _i\}^{N}_{i=1}$
of $H_\mu$ and $a\in \G{m}{n}$.
Since the coefficient of $\xi_i$ in $Z_{\mu ,a}$
is $\langle Z_{\mu ,a}, \xi_i \rangle =\overline{\xi_i (a)}$
by \eqref{eq:zonal:reproducing_kernel},
we have $Z_{\mu ,a} =\sum^N_{i=1}\overline{\xi_i (a)} \xi_i$.
By Observation~\ref{obs:y=111or000} \eqref{obs:y=111} and
\eqref{eq:zonal:reproducing_kernel}, we obtain
\[
Z_\mu (1,1,\ldots ,1)=Z_{\mu ,a} (a)
=\langle Z_{\mu ,a}, Z_{\mu ,a} \rangle
=\langle\sum^{N}_{i=1}\overline{\xi_i (a)} \xi_i,
\sum^{N}_{i=1}\overline{\xi_i (a)} \xi_i \rangle
=\sum^{N}_{i=1} \overline{\xi_i (a)} \xi_i (a) .
\]
Hence it holds that
$\int_{\G{m}{n}}Z_\mu (1,1,\ldots ,1) d\nu
=\int_{\G{m}{n}} \sum^N_{i=1} \overline{\xi_i (a)}\xi_i (a) 
d\nu =
\sum^N_{i=1} \langle \xi_i ,\xi_i\rangle =N$.
Since $\nu$ is a normalized measure,
\eqref{eq:normalize_zonal} follows.

The zonal orthogonal polynomial $Z_{\mu ,a}$ of $H_\mu$
has a certain positivity for a subset of $\G{m}{n}$
as follows:
\begin{fact}[cf.~Roy~{\cite[Lemma 3]{Roy2009bca}}]
\label{fact:positive_Z}
Let $H_{\mu}$
be an irreducible representation in $C^0(\G{m}{n})$
and $Z_\mu$ be the zonal orthogonal polynomial
of $H_{\mu}$.
Then for any subset $X$ of $\G{m}{n}$,
\[
\sum_{a,b\in X} Z_\mu (y(a,b)) \ge 0.
\]
Equality holds if and only if
\[
\sum_{a\in X} Z_{\mu , a}= 0.
\]
\end{fact}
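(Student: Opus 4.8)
The plan is to exploit the fact that, by its defining property \eqref{eq:zonal:reproducing_kernel}, the function $Z_{\mu,a}$ is the reproducing kernel of the finite-dimensional Hilbert space $H_\mu$, and to rewrite the double sum as the inner product of a single vector of $H_\mu$ with itself. First I would establish the identity
\[
Z_\mu(y(a,b)) = \langle Z_{\mu,b}, Z_{\mu,a}\rangle
\qquad (a,b \in \G{m}{n}).
\]
This is immediate from \eqref{eq:zonal:reproducing_kernel}: taking $f = Z_{\mu,b} \in H_\mu$ in the reproducing property gives $\langle Z_{\mu,b}, Z_{\mu,a}\rangle = Z_{\mu,b}(a) = Z_\mu(y(b,a))$, and since the principal angles are symmetric, $y(b,a) = y(a,b)$, so the right-hand side equals $Z_\mu(y(a,b))$.

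Granting this identity, the sum collapses into a single inner product. Writing $W := \sum_{a \in X} Z_{\mu,a} \in H_\mu$ and using the linearity of $\langle\cdot,\cdot\rangle$ in its first slot together with the conjugate-linearity in its second slot, I would compute
\[
\sum_{a,b\in X} Z_\mu(y(a,b))
= \sum_{a,b\in X} \langle Z_{\mu,b}, Z_{\mu,a}\rangle
= \left\langle \sum_{b\in X} Z_{\mu,b}, \sum_{a\in X} Z_{\mu,a}\right\rangle
= \langle W, W\rangle.
\]
Since $\langle W, W\rangle \ge 0$ always, the asserted inequality follows at once, and equality holds exactly when $W = 0$, i.e.\ when $\sum_{a\in X} Z_{\mu,a} = 0$, which is precisely the stated equality condition.

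The argument is short and presents no serious obstacle; the only point demanding care is the bookkeeping of the two arguments of the Hermitian inner product. I must make sure the conjugations are placed so that the contraction produces $\langle W, W\rangle$ rather than an unconjugated bilinear form. This is guaranteed by the reality and symmetry of the kernel values: expanding $Z_{\mu,a} = \sum_i \overline{\xi_i(a)}\,\xi_i$ in an orthonormal basis $\{\xi_i\}$ of $H_\mu$ (as already used in the derivation of \eqref{eq:normalize_zonal}) shows $Z_{\mu,a}(b) = \overline{Z_{\mu,b}(a)}$, while $y(a,b) = y(b,a)$ forces $Z_{\mu,a}(b) = Z_{\mu,b}(a)$; together these make each $Z_\mu(y(a,b))$ real and symmetric, so the two placements of the conjugation agree and no sign discrepancy can arise.
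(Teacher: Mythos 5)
Your proof is correct and is essentially the same argument that lies behind this Fact: the paper states it without proof, citing Roy's Lemma 3, whose proof is exactly this positive-definite-kernel computation---expanding $Z_{\mu,a}=\sum_i \overline{\xi_i(a)}\,\xi_i$ in an orthonormal basis of $H_\mu$ (as the paper itself does for \eqref{eq:normalize_zonal}), the double sum becomes $\bigl\|\sum_{a\in X} Z_{\mu,a}\bigr\|^2=\sum_i \bigl|\sum_{a\in X}\xi_i(a)\bigr|^2\ge 0$, with equality precisely when $\sum_{a\in X} Z_{\mu,a}=0$. Your abstract reproducing-kernel phrasing, together with the verification that the kernel values are real and symmetric via $y(a,b)=y(b,a)$, is a sound repackaging of that same argument, with no gaps.
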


The following proposition gives equivalent conditions of
the definition of designs on $\G{m}{n}$:
\begin{prop}\label{prop:eq_design_Z}
Let $X$ be a non-empty finite subset of $\G{m}{n}$ and $\mathcal{T}$ a finite subset of $\Partition{m}$.
Then the following conditions on $(X,\mathcal{T})$ are equivalent$:$
\begin{enumerate}
\item \label{item:eq_design:design}
$X$ is a $\mathcal{T}$-design on $\G{m}{n}$.
\item \label{item:eq_design:zero}
$\sum_{a \in X} f(a) = 0$ for any $\mu \in \mathcal{T} \setminus \{(0)\}$ and $f \in H_\mu$.
\item \label{item:eq_design:Z} $\sum_{a,b \in X} Z_{\mu}(y(a,b)) = 0$ for any $\mu \in \mathcal{T} \setminus \{(0)\}$.
\end{enumerate}
\end{prop}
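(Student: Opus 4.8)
The plan is to prove the cycle of implications \eqref{item:eq_design:design} $\Rightarrow$ \eqref{item:eq_design:zero} $\Rightarrow$ \eqref{item:eq_design:Z} $\Rightarrow$ \eqref{item:eq_design:zero} $\Rightarrow$ \eqref{item:eq_design:design}, organizing the argument so that the genuine content lives in the single equivalence \eqref{item:eq_design:zero} $\Leftrightarrow$ \eqref{item:eq_design:Z}, which is handled one representation $H_\mu$ at a time via the reproducing property of the zonal orthogonal polynomial $Z_{\mu,a}$.

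For \eqref{item:eq_design:design} $\Leftrightarrow$ \eqref{item:eq_design:zero}, the key observation is that for $\mu \neq (0)$ every $f \in H_\mu$ has vanishing integral $\int_{\G{m}{n}} f \, d\nu = 0$. First I would establish this: by Fact~\ref{fact:func_realization_of_reps}, $H_\mu \simeq V_{\phi(\mu)}$ is an irreducible $U(n)$-subrepresentation of $C^0(\G{m}{n})$, and the integration functional $f \mapsto \int f \, d\nu$ is $U(n)$-invariant; since $H_{(0)}$ is exactly the line of constants (Example~\ref{ex:const}) and $H_\mu \perp H_{(0)}$ for $\mu \neq (0)$ by Fact~\ref{fact:func_realization_of_reps}\,\eqref{item:eq_design:zero}, the integral of any $f \in H_\mu$ equals its $L^2$-inner product against a constant, which is zero. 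With this in hand, the defining condition of a $\mathcal{T}$-design, namely $\frac{1}{|X|}\sum_{a \in X} f(a) = \int f \, d\nu$ for all $f \in H_\mathcal{T} = \bigoplus_{\mu \in \mathcal{T}} H_\mu$, splits across the direct sum: on $H_{(0)}$ it holds automatically since both sides equal the constant value, and on each $H_\mu$ with $\mu \neq (0)$ the right side vanishes, so the condition becomes exactly $\sum_{a \in X} f(a) = 0$. This gives both directions at once.

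The main work is \eqref{item:eq_design:zero} $\Leftrightarrow$ \eqref{item:eq_design:Z}, and I expect the forward direction to be the subtler step. Fix $\mu \in \mathcal{T} \setminus \{(0)\}$ and expand using the reproducing kernel: by the construction preceding \eqref{eq:normalize_zonal}, $Z_{\mu,a} = \sum_{i=1}^N \overline{\xi_i(a)}\,\xi_i$ for an orthonormal basis $\{\xi_i\}_{i=1}^N$ of $H_\mu$, whence $Z_\mu(y(a,b)) = Z_{\mu,a}(b) = \sum_{i=1}^N \overline{\xi_i(a)}\,\xi_i(b)$. Summing over $a,b \in X$ yields
\[
\sum_{a,b \in X} Z_\mu(y(a,b)) = \sum_{i=1}^N \Bigl(\sum_{a \in X} \overline{\xi_i(a)}\Bigr)\Bigl(\sum_{b \in X} \xi_i(b)\Bigr) = \sum_{i=1}^N \Bigl| \sum_{a \in X} \xi_i(a) \Bigr|^2.
\]
If \eqref{item:eq_design:zero} holds then each $\xi_i \in H_\mu$ satisfies $\sum_{a \in X} \xi_i(a) = 0$, so every term vanishes, giving \eqref{item:eq_design:Z}. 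Conversely, if the double sum is zero, then being a sum of nonnegative terms each $\bigl|\sum_{a \in X}\xi_i(a)\bigr|^2$ is zero; since the $\xi_i$ span $H_\mu$, linearity forces $\sum_{a \in X} f(a) = 0$ for all $f \in H_\mu$, which is \eqref{item:eq_design:zero}. The positivity phenomenon here is precisely Fact~\ref{fact:positive_Z}, whose equality clause states that $\sum_{a,b} Z_\mu(y(a,b)) = 0$ iff $\sum_{a \in X} Z_{\mu,a} = 0$; one should note that $\sum_{a \in X} Z_{\mu,a} = 0$ is equivalent to $\langle f, \sum_{a} Z_{\mu,a}\rangle = \sum_a \overline{f(a)} = 0$ for all $f \in H_\mu$ by the reproducing property \eqref{eq:zonal:reproducing_kernel}, which is \eqref{item:eq_design:zero} after conjugation. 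Thus the equivalence can either be derived directly from the displayed expansion or quoted from Fact~\ref{fact:positive_Z}; either route closes the cycle and completes the proof.
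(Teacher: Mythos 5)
Your proof is correct and takes essentially the same route as the paper, which simply defers to Roy's Lemma~7: the equivalence \eqref{item:eq_design:design} $\Leftrightarrow$ \eqref{item:eq_design:zero} via orthogonality of $H_\mu$ ($\mu \neq (0)$) to the constants $H_{(0)}$, and \eqref{item:eq_design:zero} $\Leftrightarrow$ \eqref{item:eq_design:Z} via the orthonormal-basis expansion $Z_{\mu,a} = \sum_{i}\overline{\xi_i(a)}\,\xi_i$, which is exactly the mechanism behind Fact~\ref{fact:positive_Z}. One cosmetic slip in your final parenthetical: by \eqref{eq:zonal:reproducing_kernel} one has $\langle f, \sum_{a \in X} Z_{\mu,a}\rangle = \sum_{a \in X} f(a)$ with no conjugation, but this is harmless since a complex number vanishes if and only if its conjugate does.
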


\begin{proof}
The proof parallels to that of \cite[Lemma 7]{Roy2009bca}.
\end{proof}

\subsection{An expression of the zonal orthogonal polynomials using the Schur polynomials}
\label{sec:zonal_orth_poly=sum_schur}

In this subsection, we give an expression of the zonal orthogonal
polynomial $Z_\mu$ for an irreducible representation
$H_\mu$ in $C^0 (\G{m}{n})$
by using the Schur polynomials.

For $i=0,1,\ldots ,m$, let
\[
\fre _i (y_1,y_2,\ldots ,y_m) =
\sum_{1\le k_1 < k_2 <\cdots < k_i \le m}
y_{k_1}y_{k_2}\cdots y_{k_i}
\in \Lambda_m
\]
and for $i\in \Z_{> 0}$, let
\[
\frh _i (y_1,y_2,\ldots ,y_m) =
\sum_{1\le k_1 \le k_2 \le \cdots \le k_i \le m}
y_{k_1}y_{k_2}\cdots y_{k_i}
\in \Lambda_m.
\]
The polynomials $\fre _i$ and $\frh _i $
are called the \emph{$i$-th symmetric polynomial}
and the \emph{$i$-th complete symmetric polynomial} in
$y_1,y_2,\ldots ,y_m$, respectively.
It is well known that $\Lambda_m=\C [\fre_1,\fre_1,\ldots ,\fre_m]$.

If $y=(y_1,y_2,\ldots ,y_m)$ are variables and
$\mu =(\mu_1,\mu_2,\ldots ,\mu_m)$
is in $\Partition{m}$,
then the (unnormalized) \emph{Schur polynomial for $\mu$}
is defined as
\[
X_\mu (y):=
\frac{\det (y_i^{\mu_j +m-j})^m_{i,j=1}}
{\det (y_i^{m-j})^m_{i,j=1}}.
\]
Each Schur polynomial $X_\mu$ is in $\Lambda _m$.
\emph{The normalized Schur polynomial $X^\ast_{\mu}$}
is the multiple of $X_\mu$ such that
$X^\ast_{\mu}(1,1,\ldots ,1)=1$, i.e.,
\[
X^\ast_{\mu} = \frac{1}{X_{\mu}(1,1,\ldots ,1)}X_\mu.
\]

A partition $\mu\in \Partition{m}$
can be viewed as a Ferrers shape obtained by
placing cells in $m$ left-justified rows with $\mu_i$
boxes in row $i$.
For example, if $\mu=(2,1,1,0,\ldots ,0)$ then its shape

\hspace{-1.5cm}
\ifx\JPicScale\undefined\def\JPicScale{1}\fi
\unitlength \JPicScale mm
\begin{picture}(80,25)(0,0)
\put(55,20){\makebox(0,0)[cc]{$\mu=$}}

\put(80,20){\makebox(0,0)[cc]{.}}

\linethickness{0.3mm}
\put(65,25){\line(1,0){10}}
\put(65,20){\line(0,1){5}}
\put(75,20){\line(0,1){5}}
\put(65,20){\line(1,0){10}}
\linethickness{0.3mm}
\put(65,10){\line(1,0){5}}
\put(65,10){\line(0,1){15}}
\put(70,10){\line(0,1){15}}
\put(65,25){\line(1,0){5}}
\linethickness{0.3mm}
\put(65,15){\line(1,0){5}}
\end{picture}

\vspace{-1cm}
For $\mu\in \Partition{m}$,
let $\mu'$ denote the partition conjugate
to $\mu$ whose parts are the column lengths of the Ferrers shape
of $\mu$.
In the preceding example,
$\mu'=(3,1,0,\ldots ,0)$
and its shape

\ifx\JPicScale\undefined\def\JPicScale{1}\fi
\unitlength \JPicScale mm
\begin{picture}(70,20)(0,0)
\linethickness{0.3mm}
\put(50,20){\line(1,0){5}}
\put(50,10){\line(0,1){10}}
\put(55,10){\line(0,1){10}}
\put(50,10){\line(1,0){5}}
\linethickness{0.3mm}
\put(50,20){\line(1,0){15}}
\put(50,15){\line(0,1){5}}
\put(65,15){\line(0,1){5}}
\put(50,15){\line(1,0){15}}
\linethickness{0.3mm}
\put(60,15){\line(0,1){5}}
\put(40,15){\makebox(0,0)[cc]{$\mu '=$}}

\put(70,15){\makebox(0,0)[cc]{.}}

\end{picture}

\vspace{-1cm}
The following theorem yields that
the Schur polynomials can be expressed by using 
the symmetric polynomials or 
the complete symmetric polynomials.
The details can be referred to Macdonald~\cite{Macdonald1995sfa}.
\begin{fact}[Jacobi-Trudi identity and Giambelli identity]
\label{fact:schur<->symmetric_poly}
\[
X_\mu=\det (\frh_{\mu_i-i+j})^{\len (\mu)}_{i,j=1}\ 
\text{and}\ 
 X_{\mu }=\det (\fre_{{\mu '}_i-i+j})^{\len (\mu ')}_{i,j=1},
\]
where $\len (\mu)$ is the number of the non-zero entries of
$\mu=(\mu_1,\mu_2,\ldots )$.
In particular, we have
\[
X_{(i)}=\frh_i\ 
\text{and}\ 
 X_{(1^i)}=\fre_i.
\]
\end{fact}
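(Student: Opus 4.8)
The plan is to prove the two determinantal formulas separately, but to deduce the second ($\fre$-version, the Giambelli identity) from the first ($\frh$-version, Jacobi--Trudi) by a duality argument rather than from scratch. Before the general case I would dispatch the two special cases, which are immediate: for $\mu=(i)$ one has $\len(\mu)=1$, so the $\frh$-determinant collapses to its single entry $\frh_{\mu_1}=\frh_i$, giving $X_{(i)}=\frh_i$; dually, the conjugate of $(1^i)$ is $(i)$, so for $\mu=(1^i)$ the $\fre$-determinant collapses to $\fre_{\mu'_1}=\fre_i$, giving $X_{(1^i)}=\fre_i$. For the general statement I would first observe that both sides lie in $\Lambda_m$, so it is an identity of symmetric polynomials in $y_1,\dots,y_m$, and that the $\len(\mu)\times\len(\mu)$ determinant in the statement equals the determinant taken over any larger range of indices after padding $\mu$ with zeros (the extra block being lower-unitriangular in the $\frh$'s); this flexibility is what lets the duality step below match up index sets.

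For the $\frh$-identity itself I would use the Lindström--Gessel--Viennot lemma on non-intersecting lattice paths. With an appropriate choice of $\len(\mu)$ source points and $\len(\mu)$ sink points in $\Z^2$ and north/east steps weighted so that a horizontal step at height $k$ carries $y_k$, the weighted number of paths from the $i$-th source to the $j$-th sink is exactly $\frh_{\mu_i-i+j}$. Expanding $\det(\frh_{\mu_i-i+j})^{\len(\mu)}_{i,j=1}$ as a signed sum over tuples of paths and applying the lemma, every tuple with a crossing cancels, and the surviving non-intersecting tuples are in weight-preserving bijection with the semistandard fillings of the Ferrers shape of $\mu$. Hence the determinant equals $\sum_T y^{\mathrm{wt}(T)}$, the tableau generating function. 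It then remains to identify this sum with the bialternant $X_\mu=\det(y_i^{\mu_j+m-j})/\det(y_i^{m-j})$; I would do this by multiplying the tableau sum by the Vandermonde $\det(y_i^{m-j})$ and exhibiting a sign-reversing involution that cancels all terms except $\det(y_i^{\mu_j+m-j})$, which is the classical reconciliation of the combinatorial and bialternant definitions of the Schur polynomial.

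Finally I would deduce the $\fre$-version from the $\frh$-version. From the factorization $\prod_{l=1}^m(1-y_l t)\cdot\prod_{l=1}^m(1-y_l t)^{-1}=1$ one extracts the fundamental relation $\sum_{s=0}^N(-1)^s\fre_s\frh_{N-s}=0$ for $N\ge 1$, which says that the (upper-unitriangular, effectively finite) matrices $(\frh_{j-i})$ and $((-1)^{j-i}\fre_{j-i})$ are mutually inverse. By Jacobi's theorem relating a minor of a matrix to the complementary minor of its inverse, the $\frh$-determinant indexed by the beta-set $\{\mu_i+m-i\}$ transforms into an $\fre$-determinant indexed by the complementary set; since complementing this beta-set in $\{0,1,2,\dots\}$ is precisely the passage $\mu\mapsto\mu'$ (the Maya-diagram description of conjugation), this gives $X_\mu=\det(\fre_{\mu'_i-i+j})^{\len(\mu')}_{i,j=1}$. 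I expect the main obstacle to be bookkeeping rather than ideas: pinning down the source/sink coordinates so the path counts are exactly $\frh_{\mu_i-i+j}$, controlling the shift indices and determinant sizes ($\len(\mu)$ versus $\len(\mu')$, with zero-padding), and tracking the signs and complementary index sets in the inverse-matrix step so that $\mu'$ emerges correctly.
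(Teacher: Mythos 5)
The paper offers no proof of this Fact at all: it is quoted as classical, with the reader referred to Macdonald's book, so any complete argument you give is necessarily ``different'' from the paper's treatment. Your plan is correct and is assembled from the standard components: the Lindstr\"om--Gessel--Viennot argument does give $\det(\frh_{\mu_i-i+j})$ as the generating function of semistandard tableaux of shape $\mu$ (with horizontal steps at height $k$ weighted $y_k$, heights confined to $1,\dots,m$ so everything stays in $\Lambda_m$, matching the paper's finitely-many-variables setting), the sign-reversing involution against the Vandermonde is the classical reconciliation with the bialternant definition $X_\mu=\det(y_i^{\mu_j+m-j})/\det(y_i^{m-j})$ used in this paper, and your deduction of the $\fre$-version from the $\frh$-version via the relation $\sum_{s=0}^N(-1)^s\fre_s\frh_{N-s}=\delta_{N,0}$, mutually inverse unitriangular matrices, Jacobi's complementary-minor theorem, and beta-set complementation realizing $\mu\mapsto\mu'$ is precisely the Aitken-style argument Macdonald himself uses for the $\frh$/$\fre$ duality; only your route to the Jacobi--Trudi half (lattice paths rather than Macdonald's algebraic manipulation of the matrices $(y_i^{\alpha_j})$) is genuinely combinatorial, buying a bijective proof at the cost of the involution bookkeeping you acknowledge. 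Two trivial points: in your zero-padding remark the extra block is \emph{upper}-unitriangular (for $i>\len(\mu)$ the $(i,j)$ entry $\frh_{j-i}$ vanishes for $j<i$ and equals $1$ at $j=i$), not lower-unitriangular, though the determinant is unchanged either way; and note that in $m$ variables $\fre_r=0$ for $r>m$, which is harmless in the Giambelli determinant (of size $\len(\mu')=\mu_1$, possibly larger than $m$) but worth keeping in mind when truncating the inverse-matrix step to a finite block.
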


To describe the zonal orthogonal polynomials for
$H_\mu$,
first define the \emph{ascending product}
\[
(c)_s:=\prod^s_{i=1}(c+i-1),
\]
with initial value $(c)_0:=1$,
and given a partition
$\sigma =(\sigma_1,\ldots ,\sigma_m)$,
define \emph{complex hypergeometric coefficients}
\[
[c]_\sigma
:=\prod^m_{i=1}
(c-i+1)_{\sigma_i}.
\]
Further assume we have a partial order $\le$
on $\Partition{m}$ defined such that
$(\sigma_1,\ldots ,\sigma_m) \le (\kappa_1,\ldots ,\kappa_m)$
if and only if $\sigma_i \le \kappa_i$
for all $i$.
Let $y+1:=(y_1+1,y_2+1,\ldots ,y_m+1)$.
The \emph{complex hypergeometric binomial coefficients}
$\qbinom{\kappa}{\sigma}$
are given by the formula
\[
X^\ast_\kappa (y+1)=
\sum_{\sigma \le \kappa} \qbinom{\kappa}{\sigma} X^\ast _\sigma (y).
\]
If there exists no order between $\kappa$ and $\sigma$,
then we define $\qbinom{\kappa}{\sigma}:=0$.
For $\sigma ,\kappa \in \Partition{m}$,
let $\rho_\sigma :=\sum ^m_{i=1}\sigma_i (\sigma_i-2i+1)$
and $s=\sum^m_{i=1} \sigma_i$ and $k=\sum^m_{i=1} \kappa_i$.
Also let
\[
[c]_{(\kappa,\sigma)}
:=
\sum_{i\in M}
\frac{\qbinom{\kappa}{\sigma^{\uparrow i}}
\qbinom{\sigma^{\uparrow i}}{\sigma}}
{(k-s)\qbinom{\kappa}{\sigma}}
\frac{[c]_{(\kappa,\sigma^{\uparrow i})}}
{c+\frac{\rho _\kappa -\rho_\sigma}{k-s}},
\]
where
\[
M:=\left\{i=1,2,\ldots, m
\mid
\substack{
\text{
$\sigma^{\uparrow i}
:=(\sigma_1,\ldots ,\sigma_{i-1},\sigma_i+1,\sigma_{i+1},
\ldots ,\sigma_m)$
is non-increasing}\\
\text{and $\sigma^{\uparrow i} \le \kappa$}}
\right\}
\]

We can now define the zonal orthogonal polynomials for $H_\mu$.
The following result
is due to James--Constantine~\cite{James1974gjp}.

\begin{fact}
Up to normalization, the zonal orthogonal polynomial $\tilde{Z}_\mu$
of $H_\mu$
is
\[
\tilde{Z}_\mu (y):=\sum_{\sigma \le \mu}
\frac{(-1)^s\qbinom{\mu}{\sigma}[n]_{(\mu,\sigma)}}
{[m]_{\sigma}}
X^\ast_\sigma (y),
\]
where
$y=(y_1,y_2,\ldots ,y_m)\in \Range{m}{n}$.
\end{fact}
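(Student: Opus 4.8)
The plan is to characterize $\tilde{Z}_\mu$ as the unique eigenfunction of a radial differential operator and then to solve the resulting recursion in the Schur basis. First I would use that $\dim V_{\phi(\mu)}^K = 1$ (Section~\ref{subsection:Harmonic}), so each $H_\mu$ contributes a one-dimensional space of zonal functions spanned by $Z_\mu$; combined with Fact~\ref{fact:func_realization_of_reps}, this shows that $\{\, Z_\mu \mid \mu \in \Partition{m} \,\}$ is an orthogonal basis for the space of zonal functions on $\G{m}{n}$, realized as symmetric polynomials in the principal angles. Since the Schur polynomials $\{ X_\mu \}$ form another basis of $\Lambda_m$ (Fact~\ref{fact:schur<->symmetric_poly}), the statement reduces to computing the transition from $\{ Z_\mu \}$ to the normalized Schur basis $\{ X^\ast_\sigma \}$.

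The geometric input I would exploit is that $Z_{\mu ,a}$, being the reproducing kernel of the $U(n)$-irreducible subspace $H_\mu \simeq V_{\phi(\mu)}$, is an eigenfunction of the Laplace--Beltrami operator $\Delta$ on $\G{m}{n}$ (the image of the Casimir element), with eigenvalue equal to the Casimir value on $V_{\phi(\mu)}$. A standard weight computation writes this eigenvalue as an affine function of $\rho_\mu = \sum_{i=1}^m \mu_i(\mu_i - 2i + 1)$, which explains the appearance of $\rho$ in the coefficients. Restricting $\Delta$ to zonal functions produces its radial part, a second-order operator $L$ of hypergeometric (Jacobi) type on $\Lambda_m$ whose two parameters are controlled by the dimensions $n$ and $m$ of $\G{m}{n}$. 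I would then check that $L$ is triangular in the Schur basis with respect to the partial order $\le$, that is $L X_\sigma = \gamma_\sigma X_\sigma + \sum_{\tau < \sigma} (\ast)\, X_\tau$ with pairwise distinct diagonal eigenvalues $\gamma_\sigma$ determined by $\rho_\sigma$.

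Granting this, triangularity together with the normalization ``leading term $X_\mu$'' forces $\tilde{Z}_\mu$ to be the unique $L$-eigenfunction of eigenvalue $\gamma_\mu$ whose Schur expansion is supported on $\{\, \sigma \le \mu \,\}$, and its coefficients are then pinned down recursively by the eigenvalue equation $(L - \gamma_\mu)\tilde{Z}_\mu = 0$. The last step is to solve this recursion in closed form: passing to the shifted basis through the complex hypergeometric binomial coefficients $\qbinom{\kappa}{\sigma}$ (defined by $X^\ast_\kappa(y+1) = \sum_{\sigma \le \kappa} \qbinom{\kappa}{\sigma} X^\ast_\sigma(y)$) diagonalizes the off-diagonal part of $L$, and the telescoping products along the recursion path collapse precisely into the coefficients $[n]_{(\mu,\sigma)}$ and $[m]_\sigma$, yielding $\frac{(-1)^s \qbinom{\mu}{\sigma}[n]_{(\mu,\sigma)}}{[m]_\sigma}$ with $s = \sum_{i=1}^m \sigma_i$.

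I expect this final step to be the main obstacle. The bracket symbols $[c]_\sigma$ and $[c]_{(\kappa,\sigma)}$ together with the binomials $\qbinom{\kappa}{\sigma}$ are engineered exactly to encode the solution of the eigenvalue recursion, so the real work is the combinatorial verification that the proposed closed form satisfies $(L - \gamma_\mu)\tilde{Z}_\mu = 0$ --- equivalently the orthogonality $\langle \tilde{Z}_\mu, X_\sigma \rangle = 0$ for $\sigma < \mu$ against the Jacobi weight arising from $\nu$ on $\G{m}{n}$ --- rather than the setup of the differential operator. This combinatorial identity is precisely the content established by James--Constantine.
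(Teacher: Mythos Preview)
The paper does not give its own proof of this statement: it is recorded as a \emph{Fact} and attributed to James--Constantine~\cite{James1974gjp} with no argument supplied. So there is nothing in the paper to compare your proposal against; the authors simply import the formula and then use it in the explicit computations of Propositions~\ref{prop:Z=sumX} and~\ref{prop:zonal_poly_(2111)}.

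That said, your outline is a faithful sketch of the strategy behind the James--Constantine result: one-dimensionality of $V_{\phi(\mu)}^K$ forces zonal functions to be eigenfunctions of the radial part of the Laplace--Beltrami operator, the radial operator is triangular in the Schur basis for the partial order~$\le$, and the resulting recursion on the Schur coefficients is solved by the hypergeometric machinery $\qbinom{\kappa}{\sigma}$, $[c]_\sigma$, $[c]_{(\kappa,\sigma)}$. Your identification of the ``main obstacle'' is accurate: the substantive work lies in verifying the closed form for the recursion, and you are right that this is precisely what the cited paper does. If you intend to actually carry this out rather than cite it, be aware that the triangularity of the radial Laplacian in the Schur basis and the explicit form of its off-diagonal action are not entirely routine --- James--Constantine work in a more general Jacobi-polynomial framework and the verification is somewhat involved. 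For the purposes of the present paper, however, a citation is all that is needed, and that is what the authors do.
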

We note that, although $\tilde{Z}_{\mu}$
does not satisfy \eqref{eq:normalize_zonal}.
By multiplying $\tilde{Z}_{\mu}$ by
$\dim H_\mu/\tilde{Z}_{\mu}(1,1,\ldots ,1)$, we obtain
the ``normalized'' zonal orthogonal polynomial $Z_\mu$.

\subsection{Some formulas for zonal polynomials}
For an integer $k$ and a non-negative integer $r$,
the binomial coefficient $\binom{k}{r}$
is defined by 
$\binom{k}{r}=\prod^{r-1}_{i=0}(k-i)/(r-i)$
if $r>0$ and 1 if $r=0$.
The binomial coefficient satisfies
$\binom{-k}{r}=(-1)^r\binom{k+r-1}{r}$
and $\binom{k}{r}=0$ if $k<r$.

\begin{lem}
\label{lem:binom_formula}
The binomial coefficients satisfy the following relations:
\begin{enumerate}
\item
\label{item:bino1}
$\binom{n-k}{m-k}\binom{n}{k}=\binom{n}{m}\binom{m}{k}$,
\item
\label{item:bino2}
$\sum^m_{k=0}(-1)^{k}\binom{p}{k}\binom{n-k}{m-k}=\binom{n-p}{m}$,
\item
\label{item:bino3}
$($Vandermonde identity$)$
$\sum^p_{k=0}\binom{n}{p-k}\binom{m}{k}=\binom{n+m}{p}$,
\item
\label{item:bino4}
$($cf.~Delsarte \cite{Delsarte1973aat}$)$
$\sum^r_{t=i}(-1)^{t-i} \binom{t}{i}\binom{n-t}{r-t}\binom{u}{t}
=
\binom{n-u}{r-i}\binom{u}{i}$.
\end{enumerate}
\end{lem}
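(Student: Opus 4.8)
The plan is to treat these as classical binomial-coefficient identities and to prove them either by a double-counting argument that I then extend to arbitrary integer upper indices via the polynomial-identity principle, or directly by formal generating functions, which are legitimate for all integer parameters under the product-formula definition recalled just above. The structural point I would exploit is that the four statements are not independent: (i) and (iii) are elementary and self-contained, (ii) has a short generating-function proof, and (iv) reduces to (i) together with (ii), so that no genuinely new argument is needed for the last and most intricate-looking relation.

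For (i), I would count in two ways the pairs $(A,B)$ of subsets of an $n$-element set with $A\subseteq B$, $|A|=k$ and $|B|=m$: choosing $A$ first and then $B\setminus A$ gives $\binom{n}{k}\binom{n-k}{m-k}$, while choosing $B$ first and then $A\subseteq B$ gives $\binom{n}{m}\binom{m}{k}$. This proves the identity for all integers $n\ge m$. With $k$ and $m$ fixed, both sides are polynomials in $n$ of degree $m$ (using $\binom{N}{r}=\tfrac1{r!}\prod_{i=0}^{r-1}(N-i)$), so agreement at infinitely many values forces agreement as polynomials, hence for every integer $n$, including negative upper indices. Identity (iii) is the Vandermonde convolution: I compare the coefficient of $x^p$ on the two sides of $(1+x)^n(1+x)^m=(1+x)^{n+m}$ for nonnegative $n,m$, and then extend to all integers by the same polynomiality argument in $n$ and $m$.

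The heart of the computation is (ii). Writing $\binom{n-k}{m-k}=[x^{m-k}](1+x)^{n-k}=[x^m]\,x^k(1+x)^{n-k}$, where the coefficient is $0$ when $m-k<0$ and so automatically truncates the sum at $k=m$, I would compute
\begin{align*}
\sum_{k=0}^{m}(-1)^k\binom{p}{k}\binom{n-k}{m-k}
&=[x^m](1+x)^n\sum_{k\ge0}\binom{p}{k}\Bigl(\tfrac{-x}{1+x}\Bigr)^k \\
&=[x^m](1+x)^n\Bigl(1-\tfrac{x}{1+x}\Bigr)^p
=[x^m](1+x)^{n-p}=\binom{n-p}{m},
\end{align*}
where substitution into the generalized binomial series is valid because $-x/(1+x)$ has zero constant term, and the argument holds for all integers $n,p$ simultaneously.

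Finally, (iv) follows by combining (i) and (ii). Applying (i) in the equivalent form $\binom{t}{i}\binom{u}{t}=\binom{u}{i}\binom{u-i}{t-i}$, which is (i) with $(n,m,k)=(u,t,i)$, I can extract the factor $\binom{u}{i}$ from the sum and reindex by $s=t-i$, obtaining
\[
\sum_{t=i}^{r}(-1)^{t-i}\binom{t}{i}\binom{n-t}{r-t}\binom{u}{t}
=\binom{u}{i}\sum_{s=0}^{r-i}(-1)^{s}\binom{u-i}{s}\binom{(n-i)-s}{(r-i)-s}.
\]
The inner sum is precisely an instance of (ii) with parameters $n-i$, $u-i$, $r-i$ in place of $n$, $p$, $m$, so it equals $\binom{n-u}{r-i}$, yielding the claimed right-hand side $\binom{u}{i}\binom{n-u}{r-i}$. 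I expect no serious obstacle in any single step; the only points demanding care are the bookkeeping with possibly negative upper indices, which is exactly why I rely on the polynomial-identity principle and on formal power series rather than on factorial cancellations, and the recognition that (iv) should be assembled from (i) and (ii) instead of attacked directly.
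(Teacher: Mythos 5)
Your proof is correct, but note that the paper itself gives no proof of this lemma at all: it treats all four relations as classical facts, citing Delsarte for (iv) and remarking only that (iv) specializes to (ii) (the paper writes ``when $t=0$'', though it clearly means $i=0$, since setting $i=0$ in (iv) yields $\sum_{t=0}^{r}(-1)^t\binom{n-t}{r-t}\binom{u}{t}=\binom{n-u}{r}$, which is (ii) with $p=u$, $m=r$). So your argument supplies what the paper leaves implicit, and it does so soundly. Each step checks out: the double count of flags $A\subseteq B$ plus the polynomial-identity principle in the upper index $n$ (both sides have degree $m$ in $n$ once $k\le m$ are fixed, and the degenerate case $k>m$ is vacuous) correctly extends (i) to arbitrary integer upper index while keeping the lower indices nonnegative, which is all that is needed later; the coefficient-extraction computation for (ii) is valid formally since $-x/(1+x)$ has zero constant term, the interchange of $[x^m]$ with the (possibly infinite, but locally finite) sum over $k$ is harmless, and $(1+x)^n(1+x)^{-p}=(1+x)^{n-p}$ holds for all integer exponents in the ring of formal power series. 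Your reduction of (iv) is the standard one and is verified correctly: the trinomial revision $\binom{t}{i}\binom{u}{t}=\binom{u}{i}\binom{u-i}{t-i}$ is indeed (i) with $(n,m,k)=(u,t,i)$ (legitimate since $t\ge i\ge 0$ throughout the summation range, with $u$ an arbitrary integer), and after reindexing by $s=t-i$ the inner sum is exactly (ii) with $(n,p,m)$ replaced by $(n-i,u-i,r-i)$, giving $\binom{n-u}{r-i}$ as claimed. A structural remark worth keeping: your observation that (iv) is assembled from (i) and (ii) is the converse complement of the paper's remark that (ii) is the $i=0$ case of (iv), so together the two remarks show (iv) is equivalent to the pair $\{$(i),(ii)$\}$ modulo elementary manipulations.
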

Note that Lemma~\ref{lem:binom_formula} \eqref{item:bino4}
when $t=0$
leads to Lemma~\ref{lem:binom_formula} \eqref{item:bino2}.
Still, for convenience, we use both formulas as the situation demands.

\begin{prop}
\label{prop:Z=sumX}
For each $i=0,1,\ldots ,m$,
the zonal orthogonal polynomial for $H_{(1^i)}$ is
\[
Z_{(1^i)}
=
\frac{(n-2i+1)\binom{n+1}{i}^2}{(n+1) \binom{n-m}{i}}
\sum^i_{j=0}
(-1)^{i-j}
\binom{n-i+1}{j}
\binom{m-j}{i-j}
X^\ast_{
(1^j)}.
\]
\end{prop}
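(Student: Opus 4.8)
The plan is to start from the James--Constantine expression for the unnormalized zonal polynomial $\tilde{Z}_{(1^i)}$, reduce each of its ingredients to an explicit binomial quantity, and then fix the overall scalar by the normalization \eqref{eq:normalize_zonal}. The first observation is that for $\kappa=(1^i)$ the partial order forces every $\sigma\le(1^i)$ to have the form $\sigma=(1^j)$ with $0\le j\le i$, so the sum collapses to a single-index sum over $j$. Consequently only three families of quantities must be evaluated: $[m]_{(1^j)}$, the binomial coefficient $\qbinom{(1^i)}{(1^j)}$, and the nested coefficient $[n]_{((1^i),(1^j))}$.

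The first two are short. Directly from the definition, $[c]_{(1^j)}=\prod_{k=1}^{j}(c-k+1)$, so in particular $[m]_{(1^j)}=m!/(m-j)!$. For the binomial coefficients I would use $X_{(1^i)}=\fre_i$ (Fact~\ref{fact:schur<->symmetric_poly}), hence $X^{\ast}_{(1^i)}=\fre_i/\binom{m}{i}$, together with the elementary-symmetric shift identity $\fre_i(y+1)=\sum_{j=0}^{i}\binom{m-j}{i-j}\fre_j(y)$, which follows from the generating identity $\prod_{k=1}^{m}(1+t(y_k+1))=\sum_{j\ge 0}\fre_j(y)\,t^{j}(1+t)^{m-j}$ upon reading off the coefficient of $t^{i}$. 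Comparing this with the defining expansion of $X^{\ast}_{(1^i)}(y+1)$ and simplifying by Lemma~\ref{lem:binom_formula} \eqref{item:bino1} gives the clean value $\qbinom{(1^i)}{(1^j)}=\binom{i}{j}$.

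The main obstacle is the nested coefficient $[n]_{((1^i),(1^j))}$, which is only defined recursively. The key point I would establish is that for $\sigma=(1^j)$ with $j<i$ the admissible set $M$ reduces to the single index $\{j+1\}$: adding a box anywhere else either destroys monotonicity or violates $\sigma^{\uparrow l}\le(1^i)$. Thus the recursion becomes a chain $(1^j)\to(1^{j+1})\to\cdots\to(1^i)$. Along this chain the combinatorial prefactor collapses to $1$ (using $\binom{i}{j+1}(j+1)=\binom{i}{j}(i-j)$ together with $\qbinom{(1^i)}{(1^j)}=\binom{i}{j}$), while $\rho_{(1^j)}=-j(j-1)$ yields the shift $c+(\rho_{(1^i)}-\rho_{(1^j)})/(i-j)=c-i-j+1$. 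Hence $[c]_{((1^i),(1^j))}=[c]_{(1^i)}\big/\prod_{l=j}^{i-1}(c-i-l+1)$, which telescopes down from the base case $[c]_{((1^i),(1^i))}=[c]_{(1^i)}=\prod_{k=1}^{i}(c-k+1)$. This is the delicate part, since it requires correctly reading the recursive definition, pinning down $M$, verifying the prefactor is exactly $1$, and carrying out the telescoping.

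Finally I would assemble $\tilde{Z}_{(1^i)}=\sum_{j=0}^{i}\frac{(-1)^{j}\binom{i}{j}\,[n]_{((1^i),(1^j))}}{[m]_{(1^j)}}X^{\ast}_{(1^j)}$, factor out all $j$-independent constants, and recognize the remaining $j$-dependent factor as $(-1)^{i-j}\binom{n-i+1}{j}\binom{m-j}{i-j}$ up to the global sign $(-1)^{i}$; this identifies $Z_{(1^i)}$ up to a single scalar. That scalar is then pinned down by evaluating at $(1,\dots,1)$: since $X^{\ast}_{(1^j)}(1,\dots,1)=1$ and $\sum_{j=0}^{i}(-1)^{i-j}\binom{n-i+1}{j}\binom{m-j}{i-j}=\binom{n-m}{i}$ (by Lemma~\ref{lem:binom_formula} \eqref{item:bino2} followed by the reflection $\binom{-k}{r}=(-1)^{r}\binom{k+r-1}{r}$), the normalization \eqref{eq:normalize_zonal} together with the dimension formula \eqref{eq:dimH_(1^i)} forces the prefactor to equal $\frac{(n-2i+1)\binom{n+1}{i}^{2}}{(n+1)\binom{n-m}{i}}$, which is exactly the claimed formula.
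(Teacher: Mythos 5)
Your proposal is correct and follows essentially the same route as the paper's proof: collapse the James--Constantine sum to $\sigma=(1^j)$, establish $\qbinom{(1^i)}{(1^j)}=\binom{i}{j}$ via the shift expansion of $\fre_i(y+1)$ (which you justify by a generating function; the paper asserts it directly), resolve the recursion for $[c]_{((1^i),(1^j))}$ along the single chain $(1^j)\to(1^{j+1})\to\cdots\to(1^i)$ with prefactor $1$ and shift $c-i-j+1$, and pin the scalar by \eqref{eq:normalize_zonal}, \eqref{eq:dimH_(1^i)} and Lemma~\ref{lem:binom_formula}~\eqref{item:bino2}. Your only deviation is the seed $[c]_{((1^i),(1^i))}=[c]_{(1^i)}$ in place of the paper's choice $\frac{1}{c-2i+1}$, which is harmless: the recursion is linear in the seed, so this rescales $\tilde{Z}_{(1^i)}$ by a $j$-independent constant that the final normalization absorbs.
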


\begin{proof}
Firstly, we show
\begin{equation}
\label{eq:q-binom_i_j}
\qbinom{(1^{i})}{(1^{j})}
=\binom{i}{j}.
\end{equation}
By Fact~\ref{fact:schur<->symmetric_poly}, we have
\begin{eqnarray*}
X^\ast_{(1^{i})}(y+1)
&=&
\frac{1}{\binom{m}{i}}\fre_i(y+1)\\
&=&
\frac{1}{\binom{m}{i}}
\sum^i_{j=0}\binom{m-i}{m-j}\fre_j(y)\\
&=&
\sum^{i}_{j=0}
\frac{1}{\binom{m}{i}}
\binom{m-j}{i-j}\binom{m}{j}
X^\ast_{(1^{j})}(y)\\
&=&
\sum^{i}_{j=0}
\binom{i}{j}
X^\ast_{(1^{j})}(y).
\end{eqnarray*}
In the last line, we use Lemma~\ref{lem:binom_formula} \eqref{item:bino1}.
This implies \eqref{eq:q-binom_i_j}.

We second show that for each
$i\in \mathbb{Z}_{> 0}$ and
$j=0,1,\ldots ,i$,
\[
[c]_{((1^{i}),(1^{j}))}=
\prod^i_{l=j}\frac{1}{c-i-l+1}
\]
holds.
Note that $[c]_{((1^{i}),(1^{i}))}$ is an indeterminate
and let $[c]_{((1^{i}),(1^{i}))}=\frac{1}{c-2i+1}$.
The sequence $(1^{j})^{\uparrow k}$ of integers is non-increasing only if $k=1,j+1$.
Moreover 
there is no order between
$(1^{i})$ and $(1^{j})^{\uparrow 1}=(2,1^{j-1})$.
This implies
$\qbinom{(1^{i})}{(1^{j})^{\uparrow 1}}=0$.
Hence we have
\begin{eqnarray*}
[c]_{((1^{i}),(1^{j}))}
&=&
\frac{\qbinom{(1^{i})}{(1^{j})^{\uparrow j+1}}
\qbinom{(1^{j})^{\uparrow j+1}}
{(1^{j})}}
{(i-j)\qbinom{(1^{i})}{(1^{j})}}
\frac{[c]_{((1^{i}),
(1^{j})^{\uparrow j+1})}}{c+\frac{\rho_{(1^{i})}
-\rho_{(1^{j})}}{i-j}}\\
&=&
\frac{1}{c-i-j+1}
[c]_{((1^{i}),
(1^{j+1}))}.
\end{eqnarray*}
In this calculation, we use
$\rho_{(1^{i})}
=\sum^i_{k=1}(1-2k+1)
=i-i^2$.
By induction in $j$, the desired result follows.

Since the complex hypergeometric coefficients are
$[a]_{(1^i)}=\prod^i_{k=1}(a-k+1)$,
we obtain
\[
\tilde{Z}_{(1^{i})}
=
\sum^i_{j=0}
\frac{
(-1)^j \binom{i}{j}
\prod^i_{l=j}\frac{1}{n-i-l+1}
}{\prod^j_{k=1}
(m-k+1)}
X^\ast_{(1^{j})}.
\]
Finally, we have to normalize $\tilde{Z}_{(1^{i})}$.
Let
\[
\dot{Z}_{(1^{i})}:=(-1)^i \prod^i_{l=0}(n-i-l+1) \prod^i_{k=1}(m-k+1) 
\cdot
\tilde{Z}_{(1^{i})}.
\]
Then by
\begin{eqnarray*}
\lefteqn{
(-1)^i \prod^i_{l=0}(n-i-l+1) \prod^i_{k=1}(m-k+1)
\times
\frac{
(-1)^j \binom{i}{j}
\prod^i_{l=j}\frac{1}{n-i-l+1}
}{\prod^j_{k=1}
(m-k+1)}
}\\
&=&
(-1)^{i-j}\binom{i}{j}
\prod^{j-1}_{l=0}(n-i-l+1)
\prod^{i}_{k=j+1}(m-k+1)\\
&=&
(-1)^{i-j}\frac{i!}{j! (i-j)!}
\prod^{j-1}_{l=0}(n-i-l+1)
\prod^{i}_{k=j+1}(m-k+1)\\
&=&
(-1)^{i-j}
i!\binom{n-i+1}{j}
\binom{m-j}{i-j},
\end{eqnarray*}
we obtain $\dot{Z}_{(1^{i})}=\sum^i_{j=0}
(-1)^{i-j}
i!\binom{n-i+1}{j}
\binom{m-j}{i-j}
X^\ast_{(1^{j})}$.
Since $\dot{Z}_{(1^{i})}(1,1,\ldots ,1)$ coincides with
\[
\sum^i_{j=0}
(-1)^{i-j}
i!\binom{n-i+1}{j}
\binom{m-j}{i-j}=
(-1)^i i! \binom{m-n+i-1}{i}=
i! \binom{n-m}{i}
\]
by Lemma~\ref{lem:binom_formula} \eqref{item:bino2}
and $\dim H_{(1^{i})}$
is equals to $\frac{n-2i+1}{n+1}\binom{n+1}{i}^2$
by \eqref{eq:dimH_(1^i)},
the normalized zonal orthogonal polynomial for $H_{(1^i)}$
is
\begin{eqnarray*}
Z_{(1^{i})}
&=&
\frac{\dim H_{(1^{i})}}{\dot{Z}_{(1^{i})}(1,1,\ldots ,1)}\dot{Z}_{(1^{i})}\\
&=&
\frac{(n-2i+1)\binom{n+1}{i}^2}{(n+1) \binom{n-m}{i}}
\sum^i_{j=0}
(-1)^{i-j}
\binom{n-i+1}{j}
\binom{m-j}{i-j}
X^\ast_{
(1^j)}.
\end{eqnarray*}
\end{proof}

\begin{rem}
\label{rem:Z_i}
We can also calculate the expression of
the orthogonal polynomial $Z_{(i)}$ of $H_{(i)}$ by
using $X^\ast_{(j)}$'s as follows:
\[
Z_{(i)}
=
\frac{(n+2i-1)\binom{n+i-2}{i}^2}{(n-1)\binom{n-m+i-1}{i}}
\sum^i_{j=0}
(-1)^{i-j}
\binom{n+i+j-2}{j}
\binom{m+i-1}{i-j}
X^\ast_{(j)}.
\]
\end{rem}

\begin{prop}
\label{prop:X=sumZ}
The normalized Schur polynomial $X^\ast_{(1^i)}$ can be represented
by using the zonal orthogonal polynomial $Z_{(1^j)}$:
\begin{equation}
\label{eq:X=sumZ}
X^\ast_{(1^i)}=\sum^i_{j=0}\frac{n+1}{n-j+1}
\frac{\binom{m-j}{i-j}\binom{n-m}{j}}
{\binom{n-j}{i}\binom{n+1}{j}^2}Z_{(1^j)}.
\end{equation}
\end{prop}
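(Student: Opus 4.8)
The plan is to prove \eqref{eq:X=sumZ} by inverting the triangular relation established in Proposition~\ref{prop:Z=sumX}. That proposition writes each $Z_{(1^i)}$ as a linear combination of the $X^\ast_{(1^j)}$ with $j\le i$, and its diagonal coefficient is nonzero; hence the transition matrix between $\{X^\ast_{(1^j)}\}_{j=0}^{i}$ and $\{Z_{(1^j)}\}_{j=0}^{i}$ is lower triangular and invertible. Consequently $X^\ast_{(1^i)}$ admits a unique expansion of the form $\sum_{j=0}^{i}c_{ij}Z_{(1^j)}$, so it suffices to verify that the explicit coefficients appearing on the right-hand side of \eqref{eq:X=sumZ} are the correct ones, i.e.\ that substituting the $Z$-expansions back reproduces $X^\ast_{(1^i)}$.

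Concretely, I would substitute the expression for $Z_{(1^j)}$ from Proposition~\ref{prop:Z=sumX} into the claimed right-hand side of \eqref{eq:X=sumZ} and collect, for each $l$, the total coefficient of $X^\ast_{(1^l)}$. The normalizing factors $\tfrac{(n-2j+1)\binom{n+1}{j}^2}{(n+1)\binom{n-m}{j}}$ from Proposition~\ref{prop:Z=sumX} cancel against the corresponding denominators in \eqref{eq:X=sumZ} (in particular the $\binom{n-m}{j}$ and $\binom{n+1}{j}^2$ drop out), so that the coefficient of $X^\ast_{(1^l)}$ collapses to
\[
\sum_{j=l}^{i}(-1)^{j-l}\,\frac{n-2j+1}{n-j+1}\,\frac{\binom{m-j}{i-j}\,\binom{m-l}{j-l}\,\binom{n-j+1}{l}}{\binom{n-j}{i}},
\]
and the goal becomes to show this equals $\delta_{il}$. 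Here I would apply Lemma~\ref{lem:binom_formula}\,\eqref{item:bino1} in the form $\binom{m-j}{i-j}\binom{m-l}{j-l}=\binom{m-l}{i-l}\binom{i-l}{j-l}$, which pulls the factor $\binom{m-l}{i-l}$ out of the sum and leaves an alternating sum over $j$ in which only the range $l\le j\le i$ survives.

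The main obstacle will be evaluating that remaining alternating sum, whose summand carries the awkward factor $n-2j+1$ together with the shifted ratio $\binom{n-j+1}{l}/\binom{n-j}{i}$. The plan is to set $t=j-l$, $r=i-l$, rewrite $\tfrac{n-2j+1}{n-j+1}\tfrac{\binom{n-j+1}{l}}{\binom{n-j}{i}}$ as a constant multiple of a single falling-factorial reciprocal, and then split $n-2j+1$ so that the sum is brought into the shape of the Delsarte-type identity Lemma~\ref{lem:binom_formula}\,\eqref{item:bino4} (with an auxiliary use of \eqref{item:bino2}); an equivalent route is to represent the factorial ratio by a beta integral and evaluate directly. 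The diagonal case $i=l$ is immediate from the single surviving term, which one checks equals $1$. For $i>l$ the split produces two pieces whose contributions cancel exactly, the cancellation hinging on the parameter relation $(n-i-l)+(i-l+1)=n-2l+1$; verifying this cancellation is the heart of the computation. Multiplying back by $\binom{m-l}{i-l}$ then yields $\delta_{il}$, which establishes \eqref{eq:X=sumZ}.
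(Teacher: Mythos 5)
Your proposal is correct and follows essentially the same route as the paper: the paper likewise proves \eqref{eq:X=sumZ} by checking that the coefficient matrix of Proposition~\ref{prop:Z=sumX} and the claimed coefficient matrix of \eqref{eq:X=sumZ} multiply to the identity, reducing the off-diagonal entries to an alternating binomial sum handled by Lemma~\ref{lem:binom_formula}\,\eqref{item:bino1} and \eqref{item:bino4} (which produces a vanishing factor $\binom{(n-k)-i}{(n-i+1)-k}=0$), exactly the mechanism you sketch. The only cosmetic difference is that you verify the product in the opposite order ($BA=I$ rather than $AB=I$), which is equivalent for these finite triangular matrices with nonzero diagonal, and your collapsed coefficient sum and diagonal check are correct as written.
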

\begin{proof}
The validity can be verified
to check that the product
of two matrices
\[
\left(
(-1)^{i-j}
\frac{(n-2i+1)\binom{n+1}{i}^2
\binom{n-i+1}{j}
\binom{m-j}{i-j}
}{(n+1) \binom{n-m}{i}}
\right)^m_{i,j=0}
\text{and}\ 
\left(
\frac{(n+1)\binom{m-j}{i-j}\binom{n-m}{j}}
{(n-j+1)\binom{n-j}{i}\binom{n+1}{j}^2}
\right)^m_{i,j=0}
\]
obtained from the coefficients of $X^\ast_{(1^j)}$'s
in $Z_{(1^i)}$ and
the coefficients of $Z_{(1^j)}$'s
in $X^\ast_{(1^i)}$,
respectively, 
is the identity matrix.
The $(i,k)$-entry of the product of these matrices
is calculated as
\begin{equation}
\label{eq:(i,k)-entry}
\frac{(n-2i+1)\binom{n-m}{k}\binom{m}{i}\binom{n+1}{i}^2}
{(n-k+1)\binom{n-m}{i}\binom{m}{k}\binom{n+1}{k}^2}
\sum^m_{j=0}
(-1)^{i-j}
\binom{i}{j}\binom{j}{k}\frac{\binom{n-i+1}{j}}{\binom{n-k}{j}}
\end{equation}
by Lemma~\ref{lem:binom_formula} \eqref{item:bino1}.
If $k>i$, then \eqref{eq:(i,k)-entry} vanishes
by $\binom{i}{j}\binom{j}{k}=0$ for each $j=0,1,\ldots ,m$.
If $k=i$, then the index $j$ is restricted to $i$.
Hence \eqref{eq:(i,k)-entry} is equal to 1.
If $k<i$, then 
using Lemma~\ref{lem:binom_formula} \eqref{item:bino1} and \eqref{item:bino4}, 
we obtain that
\eqref{eq:(i,k)-entry} is equal to
\[
\frac{(n-2i+1)\binom{n-m}{k}\binom{m}{i}\binom{n+1}{i}^2}
{(n-k+1)\binom{n-m}{i}\binom{m}{k}\binom{n+1}{k}^2}
\frac{\binom{(n-k)-i}{(n-i+1)-k} \binom{i}{k}}{\binom{n-k}{n-i+1}}
\]
and, by $\binom{(n-k)-i}{(n-i+1)-k}=0$, the above value vanishes.
\end{proof}

\begin{prop}
\label{prop:zonal_poly_(2111)}
\begin{eqnarray*}
Z_{(2,1^{i-1})}
&=&
f_{2}\sum^i_{j=1}(-1)^{i-j}\frac{1}{j+1}
\binom{m-j}{i-j}\binom{n-i}{j-1}
X^\ast_{(2,1^{j-1})}\\
&&
+f_{1}\sum^i_{j=1}
(-1)^{i-j+1}\frac{1}{j}
\binom{m-j}{i-j}\binom{n-i}{j-1}X^\ast_{(1^{j})}
+f_{0}X^\ast_{(0)}
\end{eqnarray*}
with
\begin{eqnarray*}
f_{2}
&=&
\frac{i(i+1)(n+2)(n+3)(n-2i+1)\binom{n+1}{i+1}^2}
{(n-i+2)(n-m+1)\binom{n-m}{i}},
\\
f_{1}
&=&
\frac{i(i+1)(m+1)(n+3)(n-2i+1)\binom{n+1}{i+1}^2}
{(n-i+2)(n-m+1)\binom{n-m}{i}}
\ \text{and}
\\
f_{0}
&=&
(-1)^{i+1}
\frac{i^2 (m+1)(n+3)(n-2i+1)\binom{n+1}{i+1}^2 \binom{m}{i}}
{(n-i+2)^2 (n-m+1)\binom{n-m}{i}}.
\end{eqnarray*}
\end{prop}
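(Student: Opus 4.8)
The plan is to follow closely the template of the proof of Proposition~\ref{prop:Z=sumX}: compute the unnormalized zonal polynomial $\tilde Z_{(2,1^{i-1})}$ directly from the James--Constantine formula recalled above, and then rescale by $\dim H_{(2,1^{i-1})}/\tilde Z_{(2,1^{i-1})}(1,\ldots,1)$ using \eqref{eq:dimH_(2,1^i-1)} and \eqref{eq:normalize_zonal}. First I would determine which $\sigma \in \Partition{m}$ satisfy $\sigma \le (2,1^{i-1})$ for the componentwise order: since such a $\sigma$ is a partition with $\sigma_1 \le 2$, with $\sigma_2,\ldots,\sigma_i \le 1$, and with $\sigma_a = 0$ for $a > i$, the only possibilities are $\sigma = (2,1^{j-1})$ for $1 \le j \le i$ and $\sigma = (1^j)$ for $0 \le j \le i$. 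This already explains the shape of the claimed answer: one sum indexed by the $(2,1^{j-1})$, one sum indexed by the $(1^j)$ with $j\ge 1$, and the separate $X^\ast_{(0)}$ term coming from $\sigma=(0)=(1^0)$.

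Next I would compute the complex hypergeometric binomial coefficients $\qbinom{(2,1^{i-1})}{\sigma}$ for these $\sigma$. Writing $X_{(2,1^{i-1})} = \fre_1\fre_i - \fre_{i+1}$ via the Giambelli identity of Fact~\ref{fact:schur<->symmetric_poly} (using $(2,1^{i-1})' = (i,1)$) and substituting the single-block shift $\fre_k(y+1)=\sum_{l}\binom{m-l}{k-l}\fre_l(y)$ used in the proof of Proposition~\ref{prop:Z=sumX}, one expands $X^\ast_{(2,1^{i-1})}(y+1)$, expands the product of the two shifted factors, and re-collects the result in the basis $\{X^\ast_{(2,1^{j-1})},X^\ast_{(1^j)}\}$ through $X_{(2,1^{j-1})} = \fre_1\fre_j - \fre_{j+1}$. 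I expect this to yield closed binomial expressions for $\qbinom{(2,1^{i-1})}{(2,1^{j-1})}$ and $\qbinom{(2,1^{i-1})}{(1^j)}$, the latter being exactly the source of the factors $\binom{m-j}{i-j}\binom{n-i}{j-1}$ appearing in both sums; Lemma~\ref{lem:binom_formula}~\eqref{item:bino1} should be the main simplification tool here, just as in the $(1^i)$ case.

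The main obstacle is the iterated coefficient $[n]_{((2,1^{i-1}),\sigma)}$. Unlike the $(1^i)$ case, the covering relations inside the interval $[(0),(2,1^{i-1})]$ form a ladder rather than a chain: from $\sigma=(2,1^{j-1})$ the only admissible upward move is $(2,1^{j-1})^{\uparrow(j+1)} = (2,1^j)$, but from $\sigma=(1^j)$ with $1\le j \le i-1$ there are two admissible moves, namely $(1^j)^{\uparrow 1} = (2,1^{j-1})$ and $(1^j)^{\uparrow(j+1)} = (1^{j+1})$. Consequently the recursion defining $[c]_{((2,1^{i-1}),\sigma)}$ has a single term along the $(2,1^{\bullet})$ rail (solvable by a product, exactly as in Proposition~\ref{prop:Z=sumX}) but branches along the $(1^{\bullet})$ rail, coupling the two families. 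I would set $A_j := [c]_{((2,1^{i-1}),(2,1^{j-1}))}$ and $B_j := [c]_{((2,1^{i-1}),(1^j))}$, solve the one-term recursion for the $A_j$ first, then feed it into the two-term recursion for the $B_j$ and solve the latter by downward induction on $i-j$. The value $\rho_{(2,1^{i-1})} = 2+i-i^2$, together with $\rho_{(2,1^{j-1})}$ and $\rho_{(1^j)}$, enters the denominators $c+(\rho_\kappa-\rho_\sigma)/(k-s)$ and the base value $[c]_{((2,1^{i-1}),(2,1^{i-1}))}$ must be fixed by the same convention as in the $(1^i)$ case; tracking these factors carefully will be the delicate point.

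Finally I would assemble $\tilde Z_{(2,1^{i-1})}$ by substituting the computed $\qbinom{(2,1^{i-1})}{\sigma}$, the $[n]_{((2,1^{i-1}),\sigma)}$, the coefficients $[m]_\sigma$, and the signs $(-1)^{|\sigma|}$ into the James--Constantine formula, then grouping the result into its $X^\ast_{(2,1^{j-1})}$, $X^\ast_{(1^j)}$ and $X^\ast_{(0)}$ parts. Evaluating $\tilde Z_{(2,1^{i-1})}(1,\ldots,1)=\sum_\sigma(\text{coefficient})$ via $X^\ast_\sigma(1,\ldots,1)=1$ and one of the Vandermonde-type identities in Lemma~\ref{lem:binom_formula} should give a closed form for the normalizing constant; multiplying by $\dim H_{(2,1^{i-1})}$ from \eqref{eq:dimH_(2,1^i-1)} and collecting common factors should reproduce exactly $f_2$, $f_1$ and $f_0$. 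I expect the bookkeeping of these prefactors --- in particular disentangling the $c=n$ and $c=m$ specializations and verifying the $(n-i+2)$, $(n-m+1)$ and $\binom{n-m}{i}$ denominators --- to be the most error-prone step, so I would cross-check the final identity against the base case $i=2$, i.e.\ $\mu=(2,1)$, computed independently.
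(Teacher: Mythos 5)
Your plan reproduces the paper's own proof essentially step for step: the paper likewise identifies the interval below $(2,1^{i-1})$ as the two families $(2,1^{j-1})$ and $(1^j)$, computes $\qbinom{(2,1^{i-1})}{(2,1^{j-1})}=\frac{i+1}{j+1}\binom{i-1}{j-1}$, $\qbinom{(2,1^{i-1})}{(1^j)}=\frac{i+1}{i}\binom{i}{j}$ via the Giambelli expression $X_{(2,1^{i-1})}=\fre_1\fre_i-\fre_{i+1}$, solves the one-term recursion on the $(2,1^\bullet)$ rail to get $[c]_{((2,1^{i-1}),(2,1^{j-1}))}=\prod_{k=j}^{i-1}\frac{1}{c-i-k+1}$, handles the branching on the $(1^\bullet)$ rail by exactly your downward induction (obtaining $[c]_{((2,1^{i-1}),(1^j))}=\frac{1}{c+2}\prod_{k=j}^{i-1}\frac{1}{c-i-k+1}$), and finally normalizes using $\tilde Z_{(2,1^{i-1})}(1,\dots,1)=(-1)^{i+1}i!(n-m+1)\binom{n-m}{i}$ together with \eqref{eq:dimH_(2,1^i-1)}, just as you propose. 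Your one worry about matching the base-value convention is immaterial (the paper in fact sets $[c]_{((2,1^{i-1}),(2,1^{i-1}))}:=1$, differing from the $(1^i)$ case), since any nonzero choice is absorbed by the final normalization.
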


\begin{proof}
The conjugate partition of $(2,1^{i-1})\in \Partition{m}$
is $(i,1)$ and by the Giambelli identity in
Fact~\ref{fact:schur<->symmetric_poly}, we have
\[
X_{(2,1^{i-1})}
=
X_{(i,1)'}
=
   \left|
   \begin{array}{cc}
      \fre_{i+1-1} & \fre_{i+2-1} \\
      \fre_{1+1-2} & \fre_{1+2-2} \\
   \end{array}
   \right|
=\fre_i \fre_1 -\fre_{i+1}.
\]
Then the normalized Schur polynomial
is $X^\ast_{(2,1^{i-1})}=\frac{1}{i \binom{m+1}{i+1}}(\fre_i \fre_1 -\fre_{i+1})$.
By the definition of the complex hypergeometric binomial coefficients,
we can check
\[
\qbinom{(2,1^{i-1})}
{(2,1^{j-1})}
=
\frac{i+1}{j+1}
\binom{i-1}{j-1},\ 
\qbinom{(2,1^{i-1})}
{(1^{j})}
=
\frac{i+1}{i}\binom{i}{j},\ 
\qbinom{(2,1^{i-1})}
{(0)}
=
1.
\]
Let $[c]_{((2,1^{i-1}),(2,1^{i-1}))}:=1$.
Then by the definition of $[c]_{(\kappa ,\sigma)}$,
we can check
\[
[c]_{((2,1^{i-1}),(2,1^{j-1}))}=\prod^{i-1}_{k=j}\frac{1}{c-i-k+1}.
\]
Next by using a proof of induction,
we prove that for $j=1,2,\ldots ,i$,
\[
[c]_{((2,1^{i-1}),(1^{j}))}=\frac{1}{c+2}\prod^{i-1}_{k=j}
\frac{1}{c-i-k+1}
\]
holds.
When $j=i$, we can check $[c]_{((2,1^{i-1}),(1^i))}=1/(c+2)$.
For each $j=1,2,\ldots , i-1$, assume that
$[c]_{(2,1^{i-1}),(1^{j+1})}=\frac{1}{c+2}\prod^{i-1}_{k=j+1}\frac{1}{c-i-k+1}$ holds.
Then we have
\begin{eqnarray*}
\lefteqn{
[c]_{(2,1^{i-1}),(1^j)}}\\
&=&
\frac{1}{((i+1)-j)\qbinom{(2,1^{i-1})}{(1^{j})}}
\frac{1}{c+\frac{\rho_{(2,1^{i-1})}-\rho_{(1^{j})}}{((i+1)-j)}}\\
&&
\times
\left(
\qbinom{(2,1^{i-1})}{(2,1^{i-1})}\qbinom{(2,1^{j-1})}{(1^{j})}
[c]_{((2,1^{i-1}),(2,1^{j-1}))}
+
\qbinom{(2,1^{i-1})}{(1^{j+1})}\qbinom{(1^{j+1})}{(1^{j})}
[c]_{((2,1^{i-1}),(1^{j+1}))}
\right)\\
&=&
\frac{1}{c+2}
\prod^{i-1}_{k=j}\frac{1}{c-i-k+1}.
\end{eqnarray*}
Hence the desired result holds.
Finally we can also check \[
[c]_{((2,1^{i-1}),(0))}
=
\frac{1}{c-(i-2)}
[c]_{((2,1^{i-1}),(1))}=
\frac{1}{c-i+2}
\frac{1}{c+2}
\prod^{i-1}_{k=1}\frac{1}{c-i-k+1}.\]
Thus up to normalization, the zonal orthogonal polynomial for
$H_{(2,1^{i-1})}$
is written in
\begin{eqnarray*}
\tilde{Z}_{(2,1^{i-1})}
&=&
\sum^i_{j=1}
(-1)^{j+1}\frac{(i+1)! (n+2)(n-i+2)}{i (j+1)}
\binom{m-j}{i-j}\binom{n-i}{j-1}
X^\ast_{(2,1^{j-1})}\\
&&
+
\sum^i_{j=1}
(-1)^{j}\frac{(i+1)! (m+1)(n-i+2)}{i j}
\binom{m-j}{i-j}\binom{n-i}{j-1}
X^\ast_{(1^{j})}\\
&&
+(m+1) i! \binom{m}{i} X^\ast_{(0)}.
\end{eqnarray*}
Therefore,
by $\tilde{Z}_{(2,1^{i-1})}(1,1,\ldots ,1)=
(-1)^{i+1} i! (n-m+1) \binom{n-m}{i}$
and \eqref{eq:dimH_(2,1^i-1)},
the normalized zonal orthogonal polynomial
$Z_{(2,1^{i-1})}$ is given as desired.
\end{proof}

\begin{lem}
\label{lem:Z(1)Z(1^i)}
For any $i = 1,2,\dots,m$, the product $Z_{(1)} \cdot Z_{(1^i)}$ can be written by 
\[
Z_{(1)} \cdot Z_{(1^i)} =
a_i Z_{(2,1^{i-1})}
+ b^{(i)}_{i+1}Z_{(1^{i+1})}
+ b^{(i)}_{i} Z_{(1^i)} + b^{(i)}_{i-1} Z_{(1^{i-1})}
\]
with
\begin{eqnarray*}
a_i&=&
\frac{(i+1)(m+1)n(n-1)(n-i+2)(n-m+1)}{i m (n+2)(n+3)(n-i+1)(n-m)}
>0,\\
b^{(i)}_{i+1}&=&
\frac{(i+1)(m-i)n(n-1)(n+1)(n-m-i)}{m (n-i+1)(n-2i)(n-2i-1)(n-m)}
\ge 0,\\
b^{(i)}_{i}&=&
\frac{2 i (n-1)(n+1)(n-i+1)(n-2m)^2}{m (n+2) (n-2i) (n-2i+2)(n-m)}
\ge 0\ \text{and}\\
b^{(i)}_{i-1}&=&
\frac{(m-i+1) n(n+1)(n-1) (n-i+2)(n-m-i+1)}{i m (n-2i+2)(n-2i+3) (n-m)}
> 0.
\end{eqnarray*}
\end{lem}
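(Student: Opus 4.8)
The plan is to reduce the claimed identity to an equality of symmetric polynomials in $\Lambda_m$ and then verify it by comparing coefficients in a fixed basis of normalized Schur polynomials. Since $Z_{(1)}\cdot Z_{(1^i)}$ and the four summands on the right-hand side are all zonal polynomials, hence elements of $\Lambda_m$, and since $\Range{m}{n}$ has nonempty interior in $\R^m$, it suffices to prove the stated identity as an identity of polynomials. Both sides have degree $i+1$ and, after expansion, lie in the span of the linearly independent family $\{X^\ast_{(1^j)}\}_{j=0}^{i+1}\cup\{X^\ast_{(2,1^{j-1})}\}_{j=1}^{i}$; so I would expand each side in this family and check that the difference vanishes coefficient by coefficient.

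For the left-hand side I would first write $Z_{(1)}$ and $Z_{(1^i)}$ as explicit combinations of the $X^\ast_{(1^j)}$ using Proposition~\ref{prop:Z=sumX}. The only genuinely new ingredient needed to multiply them out is the product $X^\ast_{(1)}\cdot X^\ast_{(1^j)}$. Here I would invoke the Pieri rule $X_{(1)}X_{(1^j)} = X_{(1^{j+1})}+X_{(2,1^{j-1})}$, which is immediate from the Giambelli computation $\fre_i\fre_1-\fre_{i+1}=X_{(2,1^{i-1})}$ carried out in the proof of Proposition~\ref{prop:zonal_poly_(2111)} together with $X_{(1^i)}=\fre_i$ from Fact~\ref{fact:schur<->symmetric_poly} (with the conventions $X_{(1^{m+1})}=0$ and, for $j=0$, $X_{(1)}X_{(0)}=X_{(1)}$). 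Passing to normalized polynomials through the values $X_{(1)}(1,\dots,1)=m$, $X_{(1^j)}(1,\dots,1)=\binom{m}{j}$, $X_{(1^{j+1})}(1,\dots,1)=\binom{m}{j+1}$ and $X_{(2,1^{j-1})}(1,\dots,1)=j\binom{m+1}{j+1}$, this expresses the product in the target basis, each coefficient being a short binomial sum.

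For the right-hand side I would expand $Z_{(1^{i+1})}$, $Z_{(1^i)}$, $Z_{(1^{i-1})}$ by Proposition~\ref{prop:Z=sumX} and $Z_{(2,1^{i-1})}$ by Proposition~\ref{prop:zonal_poly_(2111)}. Note that on the right only $Z_{(2,1^{i-1})}$ contributes any $X^\ast_{(2,1^{j-1})}$, while the three column polynomials contribute only $X^\ast_{(1^j)}$. Comparing the two top-degree basis elements is therefore the cleanest check and recovers the stated constants: $X^\ast_{(2,1^{i-1})}$ appears on the right solely through $a_iZ_{(2,1^{i-1})}$, which matches the coefficient of $X^\ast_{(2,1^{i-1})}$ produced on the left by $X^\ast_{(1)}\cdot X^\ast_{(1^i)}$ exactly when $a_i$ has its claimed value; similarly $X^\ast_{(1^{i+1})}$ appears on the right only through $b^{(i)}_{i+1}Z_{(1^{i+1})}$, fixing $b^{(i)}_{i+1}$. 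The next two comparisons are consistent precisely with the stated $b^{(i)}_i$ and $b^{(i)}_{i-1}$, and the substance of the lemma is that, with all four constants as given, every remaining comparison holds identically.

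I expect the coefficients of the $X^\ast_{(1^j)}$ to be the main obstacle. The $X^\ast_{(2,1^{j-1})}$ family is fed on the right only by $a_iZ_{(2,1^{i-1})}$, so for $j<i$ those reduce to one binomial identity apiece; by contrast each $X^\ast_{(1^j)}$ receives contributions from all three column polynomials on the right and, on the left, both from the constant part $-m\,X^\ast_{(0)}$ of $Z_{(1)}$ and from the Pieri reduction of $X^\ast_{(1)}\cdot X^\ast_{(1^{j})}$. Matching these produces alternating binomial sums that I would collapse using the Vandermonde and Delsarte identities, Lemma~\ref{lem:binom_formula}~\eqref{item:bino3} and~\eqref{item:bino4}, with \eqref{item:bino1} for the repeated reindexing. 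Finally the sign assertions $a_i>0$, $b^{(i)}_{i+1}\ge 0$, $b^{(i)}_{i}\ge 0$, $b^{(i)}_{i-1}>0$ follow by inspecting the explicit factorizations under the standing hypotheses $n\ge 2m$ and $1\le i\le m$, the boundary cases $i=m$ and $n=2m$ (where $Z_{(1^{i+1})}$ is vacuous or the relevant numerators vanish) being verified directly.
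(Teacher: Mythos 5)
Your proposal is correct and takes essentially the same route as the paper's own proof: both expand everything in the normalized Schur basis via Proposition~\ref{prop:Z=sumX} and Proposition~\ref{prop:zonal_poly_(2111)}, reduce the products $X^\ast_{(1)}X^\ast_{(1^j)}$ through the Pieri-type relation $\fre_1\fre_j = X_{(2,1^{j-1})}+\fre_{j+1}$ coming from the Giambelli computation in Fact~\ref{fact:schur<->symmetric_poly}, and settle the resulting alternating binomial sums with Lemma~\ref{lem:binom_formula}. The only difference is organizational---you compare coefficients of the two sides in the basis $\{X^\ast_{(1^j)}\}\cup\{X^\ast_{(2,1^{j-1})}\}$, while the paper rewrites the left-hand side step by step into the zonal basis---which does not change the substance of the computation.
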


\begin{rem}
The positivities of coefficients in the formula in Lemma \ref{lem:Z(1)Z(1^i)} 
can be explained in terms of branching rules of 
the $U(n)$-representation $V_{\phi((1))} \otimes V_{\phi((1^i))}$,
see Section \ref{subsection:Harmonic} for the notation of $V_{\phi(\mu)}$.
We omit the details here.
\end{rem}

\begin{proof}
We prove this lemma by direct calculation.
By Proposition~\ref{prop:Z=sumX}, we have
\begin{eqnarray}
\notag
\lefteqn{
Z_{(1)}\cdot Z_{(1^i)}
}\\
\notag
&=&
\frac{(n-1)(n-2i+1)\binom{n+1}{i}^2}{(n-m)\binom{n-m}{i}}
\left(
n\sum^i_{j=0}
(-1)^{i-j}
\binom{n-i+1}{j}
\binom{m-j}{i-j}
X^\ast_{(1)}
X^\ast_{
(1^j)}
\right.\\
&&
\label{eq:Z1Z1^j_first}
\left.
-m
\sum^i_{j=0}
(-1)^{i-j}
\binom{n-i+1}{j}
\binom{m-j}{i-j}
X^\ast_{
(1^j)}
\right).
\end{eqnarray}
By the definition of $X^\ast_{(2,1^{j-1})}$,
for $j=1,2,\ldots ,m$,
we have
\[
X^\ast_{(2,1^{j-1})}
=\frac{1}{j \binom{m+1}{j+1}}
(\fre_1 \fre_{j}-\fre_{j+1})
=\frac{1}{j \binom{m+1}{j+1}}
\left(
m \binom{m}{j}X^\ast_{(1)} X^\ast_{(1^j)}
-\binom{m}{j+1}X^\ast_{(1^{j+1})}
\right).
\]
This implies
$X^\ast_{(1)} X^\ast_{(1^j)}=
\frac{j(m+1)}{(j+1)m}X^\ast_{(2,1^{j-1})}
+\frac{m-j}{(j+1)m}X^\ast_{(1^{j+1})}$.
Then we obtain
\begin{eqnarray}
\notag
\lefteqn{
\sum^i_{j=0}(-1)^{i-j}
\tbinom{n-i+1}{j}\tbinom{m-j}{i-j}
X^\ast_{(1)} X^\ast_{(1^j)}
}\\
&=&
\notag
\sum^i_{j=1}(-1)^{i-j}
\tbinom{n-i+1}{j}\tbinom{m-j}{i-j}
\left(
\tfrac{j(m+1)}{(j+1)m}X^\ast_{(2,1^{j-1})}
+\tfrac{m-j}{(j+1)m}X^\ast_{(1^{j+1})}
\right)
+(-1)^i\tbinom{m}{i}X^\ast_{(1)}\\
&=&
\notag
\tfrac{(n-i+1)(m+1)}{m}\sum^i_{j=1}(-1)^{i-j}
\tfrac{1}{j+1}\tbinom{n-i}{j-1}\tbinom{m-j}{i-j}
X^\ast_{(2,1^{j-1})}
+\tbinom{n-i+1}{i}\tfrac{m-i}{(i+1)m}X^\ast_{(1^{i+1})}\\
&&
\label{eq:X1X1^j_expand}
-\sum^i_{j=1}(-1)^{i-j}\tfrac{m-j+1}{j m}
\tbinom{n-i+1}{j-1}\tbinom{m-j+1}{i-j+1}
X^\ast_{(1^{j})}.
\end{eqnarray}
On the other hand
we have,
by Proposition~\ref{prop:zonal_poly_(2111)},
\begin{eqnarray}
\notag
\lefteqn{
\sum^i_{j=1}(-1)^{i-j}\tfrac{1}{j+1}
\tbinom{n-i}{j-1}\tbinom{m-j}{i-j}X^\ast_{(2,1^{j-1})}
}\\
&=&
\notag
\tfrac{(n-m+1)(n-i+2)\binom{n-m}{i}}
{i (i+1) (n+2) (n+3) (n-2i+1)\binom{n+1}{i+1}^2}
Z_{(2,1^{i-1})}
+\tfrac{m+1}{n+2}
\sum^i_{j=1}(-1)^{i-j}\tfrac{1}{j}
\tbinom{n-i}{j-1}\tbinom{m-j}{i-j}X^\ast_{(1^{j})}\\
&&
\label{eq:X21^j-1_expand}
+(-1)^i \tfrac{i(m+1)}{(i+1)(n+2)(n-i+2)} \tbinom{m}{i}X^\ast_{(0)}
\end{eqnarray}
and by Proposition~\ref{prop:Z=sumX},
\begin{equation}
\label{eq:X1^i+1_expand}
X^\ast_{(1^{i+1})}
=
\tfrac{(n+1)\binom{n-m}{i+1}}
{(n-2i-1)\binom{n+1}{i+1}^2\binom{n-i}{i+1}}
Z_{(1^{i+1})}
+\tfrac{1}{\binom{n-i}{i+1}}
\sum^{i}_{j=0}(-1)^{i-j}
\tbinom{n-i}{j}\tbinom{m-j}{i+1-j}
X^\ast_{(1^j)}.
\end{equation}
Applying 
\eqref{eq:X1X1^j_expand},
\eqref{eq:X21^j-1_expand}
and
\eqref{eq:X1^i+1_expand}
to
\eqref{eq:Z1Z1^j_first},
we have
\begin{eqnarray*}
\lefteqn{
Z_{(1)}\cdot Z_{(1^i)}
}\\
&=&
\tfrac{(i+1)(m+1)n(n-1)(n-i+2)(n-m+1)}{i m (n+2)(n+3)(n-i+1)(n-m)}
Z_{(2,1^{i-1})}
+\tfrac{(i+1)(m-i)n(n-1)(n+1)(n-m-i)}
{m (n-i+1)(n-2i)(n-2i-1)(n-m)}Z_{(1^{i+1})}\\
&&
+\tfrac{(n-1)(n-2i+1)\binom{n+1}{i}^2}{(n-m)\binom{n-m}{i}}
\left(
\tfrac{(m+1)^2 n (n-i+1)}{m (n+2)}
\sum^i_{j=1}(-1)^{i-j}\tfrac{1}{j}
\tbinom{n-i}{j-1}\tbinom{m-j}{i-j}X^\ast_{(1^{j})}
\right.\\
&&
+(-1)^i \tfrac{i(m+1)^2 n (n-i+1)}{(i+1)m(n+2)(n-i+2)} \tbinom{m}{i}X^\ast_{(0)}
+\tfrac{(m-i) n (n-i+1)}{m (n-2i+1)(n-2i)}\sum^{i}_{j=0}
(-1)^{i-j}\tbinom{n-i}{j}\tbinom{m-j}{i+1-j}X^\ast_{(1^j)}\\
&&
\left.
-\sum^i_{j=1}(-1)^{i-j}
\tfrac{n(m-j+1)}{j m}\tbinom{n-i+1}{j-1}\tbinom{m-j+1}{i-j+1}X^\ast_{(1^{j})}
-m\sum^i_{j=0}(-1)^{i-j}
\tbinom{n-i+1}{j}\tbinom{m-j}{i-j}X^\ast_{(1^j)}
\right).
\end{eqnarray*}
Since the coefficient of $X^\ast_{(1^i)}$
in the third term of the above equation is
\[
\frac{(n-1)(n-2i+1)\binom{n+1}{i}^2}{(n-m)\binom{n-m}{i}}
\frac{2 i (n-i+1) (n-2m)^2}{m (n+2) (n-2i) (n-2i+2)}\binom{n-i+1}{i},
\]
the coefficient of $Z_{(1^i)}$ in $Z_{(1)}\cdot Z_{(1^i)}$
is determined as
\[
\frac{2 i (n-1)(n+1)(n-i+1)(n-2m)^2}{m (n+2) (n-2i) (n-2i+2)(n-m)}.
\]
The remainder term of $Z_{(1)}\cdot Z_{(1^i)}$
can be written in the multiple of $Z_{(1^{i-1})}$
as follows:
\begin{eqnarray*}
\lefteqn{
\tfrac{(n-1)(n-2i+1)\binom{n+1}{i}^2}{(n-m)\binom{n-m}{i}}
\left(
\tfrac{(m+1)^2 n (n-i+1)}{m (n+2)}
\sum^{i-1}_{j=1}(-1)^{i-j}\tfrac{1}{j}
\tbinom{n-i}{j-1}\tbinom{m-j}{i-j}X^\ast_{(1^{j})}
\right.
}\\
&&
+(-1)^i \tfrac{i(m+1)^2 n (n-i+1)}{(i+1)m(n+2)(n-i+2)} \tbinom{m}{i}X^\ast_{(0)}
+\tfrac{(m-i) n (n-i+1)}{m (n-2i+1)(n-2i)}
\sum^{i-1}_{j=0}
(-1)^{i-j}
\tbinom{n-i}{j}
\tbinom{m-j}{i+1-j}
X^\ast_{(1^j)}\\
&&
-\sum^{i-1}_{j=1}
(-1)^{i-j}
\tfrac{n(m-j+1)}{j m}
\tbinom{n-i+1}{j-1}
\tbinom{m-j+1}{i-j+1}
X^\ast_{(1^{j})}
-m\sum^{i-1}_{j=0}(-1)^{i-j}
\tbinom{n-i+1}{j}
\tbinom{m-j}{i-j}
X^\ast_{(1^j)}\\
&&
\left.
-\tfrac{2i (n-i+1) (n-2m)^2}{m (n+2) (n-2i) (n-2i+2)}
\sum^{i-1}_{j=0}(-1)^{i-j}
\tbinom{n-i+1}{j}
\tbinom{m-j}{i-j}
X^\ast_{(1^j)}
\right)\\
&=&
\tfrac{(m-i+1) n(n+1)(n-1) (n-i+2)(n-m-i+1)}{i m (n-2i+2)(n-2i+3) (n-m)}
Z_{(1^{i-1})}.
\end{eqnarray*}
Therefore the desired result follows.
\end{proof}

\section{Proofs of main results}
\label{sec:proof}

In this section, 
we prove the next two propositions:

\begin{prop}\label{prop:tightE}
For any $\Ades$-design $X$ on $\G{m}{n}$, 
the inequality $|X| \geq \binom{n}{m}$ holds.
Furthermore,
let $S$ be a finite subset of $\G{m}{n}$ with $|S| = \binom{n}{m}$.
Then 
the following conditions on $S$ are equivalent:
\begin{enumerate}
\item $S$ is an $\Ades$-design on $\G{m}{n}$.
\item $y_m(a,b) = 0$ for any distinct $a,b \in S$.
\end{enumerate}
\end{prop}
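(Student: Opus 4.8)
The plan is to run a Delsarte-type linear programming argument using the single test function $F := X^\ast_{(1^m)}$. The key observation I would start from is that, by Fact~\ref{fact:schur<->symmetric_poly}, $X^\ast_{(1^m)} = \fre_m$, so that $F(y) = y_1 y_2 \cdots y_m$ on $\Range{m}{n}$. Since $1 \ge y_1 \ge \dots \ge y_m \ge 0$, this function satisfies $0 \le F \le 1$, with $F(1,\dots,1) = 1$ and with $F(y) = 0$ precisely when $y_m = 0$ --- exactly the quantity appearing in condition (ii). Moreover, Proposition~\ref{prop:X=sumZ} expands $F$ as $F = \sum_{j=0}^m c_j Z_{(1^j)}$ with all coefficients $c_j > 0$ and with $c_0 = 1/\binom{n}{m}$. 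These three facts are the whole engine of the argument.

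For the inequality, I would take an $\Ades$-design $X$ and sum the expansion of $F$ over all ordered pairs $(a,b) \in X \times X$. By Proposition~\ref{prop:eq_design_Z}, the sums $\sum_{a,b} Z_{(1^j)}(y(a,b))$ vanish for $j=1,\dots,m$, so only the $j=0$ term survives (and $Z_{(1^0)}$ is the constant function $1$), giving $\sum_{a,b} F(y(a,b)) = c_0 |X|^2 = |X|^2/\binom{n}{m}$. Separating the diagonal pairs (each contributing $F(1,\dots,1)=1$, since $y(a,a)=(1,\dots,1)$ by Observation~\ref{obs:y=111or000}) from the off-diagonal ones (each $\ge 0$ because $F \ge 0$) yields $\sum_{a,b} F(y(a,b)) \ge |X|$. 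Comparing the two expressions gives $|X| \ge \binom{n}{m}$.

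For the equivalence I would fix $S$ with $|S| = \binom{n}{m}$ and trace the equality case. If $S$ is an $\Ades$-design, the inequality above is forced to be an equality, so $\sum_{a \ne b} F(y(a,b)) = 0$; since each summand is $\prod_i y_i(a,b) \ge 0$, every summand vanishes, which by the zero-set description of $F$ means $y_m(a,b)=0$ for all distinct $a,b$. Conversely, if $y_m(a,b)=0$ for all distinct pairs then $F$ vanishes off the diagonal, so $\sum_{a,b} F(y(a,b)) = |S| = \binom{n}{m}$; subtracting the $j=0$ contribution $c_0|S|^2 = \binom{n}{m}$ leaves $\sum_{j=1}^m c_j \sum_{a,b} Z_{(1^j)}(y(a,b)) = 0$, and here I would invoke Fact~\ref{fact:positive_Z} (each inner sum $\ge 0$) together with $c_j > 0$ to conclude that every inner sum vanishes, whence $S$ is an $\Ades$-design by Proposition~\ref{prop:eq_design_Z}.

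The conceptual heart --- and the only place where real insight is needed --- is choosing $F = X^\ast_{(1^m)} = \prod_i y_i$, whose zero locus is exactly $\{y_m = 0\}$ and whose $Z_{(1^j)}$-expansion happens to have strictly positive coefficients. Given Proposition~\ref{prop:X=sumZ} and Fact~\ref{fact:positive_Z}, everything else is bookkeeping. The subtlest step is the converse direction: it genuinely requires the positivity of all of $c_1,\dots,c_m$ (not just $c_0$), so that the nonnegativity of the individual sums $\sum_{a,b} Z_{(1^j)}$ can be promoted to their vanishing.
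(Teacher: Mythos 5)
Your proof is correct and follows essentially the same route as the paper: the paper also takes $F^{\Ades}(y)=\prod_{i=1}^m y_i = X^\ast_{(1^m)}$, expands it via Proposition~\ref{prop:X=sumZ} into a positive combination of the $Z_{(1^j)}$ with constant term $1/\binom{n}{m}$ (Lemma~\ref{lem:F_E_can_be_written}), and concludes using Fact~\ref{fact:positive_Z} and Proposition~\ref{prop:eq_design_Z}. The only cosmetic difference is that the paper packages the summation-over-pairs bookkeeping into the general linear programming statement (Proposition~\ref{prop:LP-bounds}), whereas you inline that argument for this specific choice of coefficients.
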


\begin{prop}\label{prop:tightEF}
Let $S$ be a finite subset of $\G{m}{n}$ with $|S| = \binom{n}{m}$.
Then the following conditions on $S$ are equivalent:
\begin{enumerate}
\item $S$ is a $\Ades \cup \Adesh$-design on $\G{m}{n}$.
\item For any $a,b \in S$ and any $i=1,\dots,m$,
\[
y_i(a,b) = 0 \text{ or } 1.
\]
\end{enumerate}
\end{prop}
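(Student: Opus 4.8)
The plan is to treat the two implications separately, using the product formula of Lemma~\ref{lem:Z(1)Z(1^i)} as the bridge between the $\Adesh$-design conditions and the sharpening of $y_m(a,b)=0$ to the full requirement that every $y_i(a,b)\in\{0,1\}$. In both directions the $\Ades$-part is already governed by Proposition~\ref{prop:tightE}: since $\Ades\subseteq\Ades\cup\Adesh$ and, for distinct $a,b$, having all $y_i(a,b)\in\{0,1\}$ forces $y_m(a,b)=0$, each of (i) and (ii) implies that $S$ is an $\Ades$-design with $|S|=\binom{n}{m}$. Hence in either case $\fre_m(y(a,b))=X^\ast_{(1^m)}(y(a,b))=\delta_{a,b}$ for $a,b\in S$ (Pl\"ucker-orthogonality of distinct points), and by the equality clause of Fact~\ref{fact:positive_Z} we record the functional identities $\sum_{a\in S}Z_{(1^j),a}=0$ in $C^0(\G{m}{n})$ for $j=1,\dots,m$. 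It then remains to link the extra conditions $M_{(2,1^{i-1})}(S)=0$ $(i=2,\dots,m)$ with the $\{0,1\}$-valuedness of the principal angles.

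The main mechanism is to read Lemma~\ref{lem:Z(1)Z(1^i)} as an identity of functions on $\G{m}{n}$,
\[
Z_{(1),a}\cdot Z_{(1^i),a}=a_iZ_{(2,1^{i-1}),a}+b^{(i)}_{i+1}Z_{(1^{i+1}),a}+b^{(i)}_{i}Z_{(1^i),a}+b^{(i)}_{i-1}Z_{(1^{i-1}),a},
\]
and to sum it over $a\in S$. For a $\Ades\cup\Adesh$-design all four indices on the right lie in $\bigl(\Ades\cup\Adesh\bigr)\setminus\{(0)\}$ (the term $Z_{(1^{m+1})}$ being absent, as $b^{(m)}_{m+1}=0$), so the identities $\sum_{a\in S}Z_{\mu,a}=0$ yield the pointwise identity $\sum_{a\in S}Z_{(1),a}\cdot Z_{(1^i),a}=0$ for every $i=2,\dots,m$; the case $i=1$ of the same computation gives $\sum_{a\in S}Z_{(1),a}^2=a_1\sum_{a\in S}Z_{(2),a}+b^{(1)}_{0}\,|S|$.

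For the implication (i)$\Rightarrow$(ii) I would combine these with $\fre_m(y(a,b))=\delta_{a,b}$. Writing $\fre_m=X^\ast_{(1^m)}=\sum_{j=0}^{m}d_jZ_{(1^j)}$ with all $d_j>0$ by Proposition~\ref{prop:X=sumZ}, the vanishing of the products for $i\ge2$ together with $\sum_{a\in S}Z_{(1),a}=0$ collapses the function $\sum_{a\in S}Z_{(1),a}\,\fre_m(y(a,\cdot))$ to $d_1\sum_{a\in S}Z_{(1),a}^2$; evaluating at a point $b\in S$ and using $\fre_m(y(a,b))=\delta_{a,b}$ gives the universal value
\[
\sum_{a\in S}Z_{(1)}(y(a,b))^2=\frac{\dim H_{(1)}}{d_1}\qquad(b\in S).
\]
The $i=1$ identity above then pins $\sum_{a\in S}Z_{(2)}(y(a,b))$, and together with the $\Ades$-pinned values of $\sum_{a\in S}X^\ast_{(1)}(y(a,b))$ and $\sum_{a\in S}X^\ast_{(1^2)}(y(a,b))$ this determines $\sum_{a\in S}\bigl(\fre_1-p_2\bigr)(y(a,b))=\sum_{a\in S}\sum_{k=1}^{m}y_k(1-y_k)(y(a,b))$ to be a constant depending only on $n$ and $m$ (here $p_2=\sum_k y_k^2$, whose Schur expansion is what brings in $X^\ast_{(2)}$). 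Since $\sum_k y_k(1-y_k)\ge0$ with equality exactly when all $y_k\in\{0,1\}$, and since this universal constant is $0$ for the standard great antipodal set of Section~\ref{subsection:antipodal_on_CG}, it must be $0$ in general; hence every distinct pair in $S$ satisfies $y_k(a,b)\in\{0,1\}$, which is (ii).

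For (ii)$\Rightarrow$(i) the same bookkeeping gives $M_{(2,1^{i-1})}(S)=\tfrac1{a_i}\sum_{a,b\in S}Z_{(1)}(y(a,b))Z_{(1^i)}(y(a,b))$ once the already-established relations $M_{(1^j)}(S)=0$ are used, so it suffices to show this product-sum vanishes. Here I would evaluate it directly from the great antipodal distribution, in which $y(a,b)=(1^{l},0^{m-l})$ occurs with multiplicity $\binom{n}{m}\binom{m}{l}\binom{n-m}{m-l}$ where $l=\dim(a\cap b)$; inserting the explicit expansions of $Z_{(1)}$ and $Z_{(1^i)}$ from Proposition~\ref{prop:Z=sumX} reduces the sum to a hypergeometric identity in binomial coefficients, which I expect to close using Lemma~\ref{lem:binom_formula}. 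Establishing this direction first also removes any circularity from the comparison step used above. The hardest part will be precisely this last bookkeeping: organizing the binomial sums so that the product-sums collapse to $0$, and confirming that the universal constant appearing in (i)$\Rightarrow$(ii) is genuinely $0$ rather than merely independent of $S$.
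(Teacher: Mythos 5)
Your plan takes a genuinely different route from the paper. The paper proves both implications simultaneously by one application of the linear-programming equivalence (Proposition~\ref{prop:LP-bounds}~\eqref{item:LP:eq_condition}) to the single non-negative test function $F^{\Adesh}(y)=\binom{n-2}{m-1}\bigl(\prod_i y_i\bigr)\bigl(\sum_i y_i\bigr)+\sum_i y_i(1-y_i)$: Lemma~\ref{lem:F_can_be_written} gives its $Z$-expansion with constant term $m\binom{n-2}{m-1}/\binom{n}{m}$ and strictly positive $(2,1^{j-1})$-coefficients, so that $\Adesh\subset\mathcal{T}^{+}_{c^\Adesh}\subset\Ades\cup\Adesh$ and $\mathcal{T}^{-}_{c^\Adesh}\subset\Ades$, and Conditions A, B, C then transfer back and forth with no moment of any configuration ever evaluated. (Lemma~\ref{lem:Z(1)Z(1^i)} enters the paper only through the positivity $a_j>0$ used in Lemma~\ref{lem:F_can_be_written}.) Your moment-pinning argument for (i)$\Rightarrow$(ii) is correct as far as it goes and is an attractive alternative: summing the reproducing-kernel identity of Lemma~\ref{lem:Z(1)Z(1^i)} over $a\in S$, pinning $\sum_{a}Z_{(1)}(y(a,b))^2$ via $\fre_m(y(a,b))=\delta_{a,b}$, pinning $\sum_a Z_{(2)}(y(a,b))$ from the $i=1$ identity, and concluding that $\sum_{a}\sum_k y_k(1-y_k)(y(a,b))$ is a universal constant are all sound steps (note that your final inequality uses exactly the second summand of the paper's $F^\Adesh$, so the two proofs share the same non-negativity mechanism; the paper simply bundles it with $\bigl(\prod y_i\bigr)\bigl(\sum y_i\bigr)$ so that Fact~\ref{fact:positive_Z} does all the work at once).

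The genuine gap is that (ii)$\Rightarrow$(i) is not actually proved. You correctly reduce it, via $M_{(1^j)}(S)=0$ and Lemma~\ref{lem:Z(1)Z(1^i)}, to the vanishing of $\sum_{a,b\in S}Z_{(1)}(y(a,b))Z_{(1^i)}(y(a,b))$ over the great antipodal distribution (and you should make explicit the step that licenses this distribution: under (ii) every pair satisfies Proposition~\ref{prop:equiv_antipodal_set}, so $S$ is antipodal of maximal cardinality by Fact~\ref{fact:2-number_of_Gmn}, hence congruent to the standard great antipodal set by Fact~\ref{fact:uniqueness_great}). But the resulting triple binomial sum is only ``expected'' to close via Lemma~\ref{lem:binom_formula}; it is true, yet it constitutes the entire content of that direction, and your (i)$\Rightarrow$(ii) also hangs on it, because your calibration of the universal constant to $0$ uses the great antipodal set \emph{already known} to be an $\Ades\cup\Adesh$-design. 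So as written the whole proof rests on an unexecuted computation. Either carry out that identity (or compute your constant $C(n,m)$ directly, since every coefficient in your pinning is explicit), or note that this direction is exactly where the paper's method is decisively lighter: given (ii), $y_m(a,b)=0$ for distinct pairs, so $S$ is an $\Ades$-design by Proposition~\ref{prop:tightE}, hence a $\mathcal{T}^{-}_{c^\Adesh}$-design with $F^\Adesh(y(a,b))=0$ for distinct $a,b$; with $|S|=\binom{n}{m}$ this is Conditions B and C, and Proposition~\ref{prop:LP-bounds}~\eqref{item:LP:eq_condition} yields Condition A, i.e.\ the $\Adesh$-part, with no binomial computation at all.
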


Theorem \ref{Mainthm:great_anti_to_E_design} and 
Theorem \ref{Mainthm:great_anti_to_EF_design}
are reduced to 
Proposition \ref{prop:tightE} and Proposition \ref{prop:tightEF}
as follows. 
By Fact \ref{fact:2-number_of_Gmn} and Proposition \ref{prop:equiv_antipodal_set}, 
a finite subset $S$ of $\G{m}{n}$ 
is great antipodal set 
if and only if 
$|S| = \binom{n}{m}$ and 
$y_i(a,b) = 0 \text{ or } 1$ 
for any $a,b \in S$ and any $i=1,\dots,m$.
In this case, 
$y_m(a,b) = 0$ for distinct $a,b \in S$ 
since $y_i(a,b) \geq y_{i+1}(a,b)$ and 
$y(a,b) \neq (1,\dots,1)$ by Observation \ref{obs:designs}.
Hence, Theorem \ref{Mainthm:great_anti_to_E_design} follows from Proposition \ref{prop:tightE}. 
Note that any $\Ades \cup \Adesh$-design $X$ is also a $\Ades$-design,
and hence $|X| \geq \binom{n}{m}$ by Theorem \ref{Mainthm:great_anti_to_E_design}.
Thus, 
Theorem~\ref{Mainthm:great_anti_to_EF_design} follows from Proposition~\ref{prop:tightEF}.

\subsection{Linear programming bounds}

In order to prove Proposition \ref{prop:tightE} and Proposition \ref{prop:tightEF},
we show the following proposition,
which is a kind of generalization of \cite[Theorem 9]{Roy2009bca}:

\begin{prop}\label{prop:LP-bounds}
Let \[
c : \Partition{m} \rightarrow \R,\quad \mu \mapsto c_{\mu}
\] be a real function on $\Partition{m}$ with the following properties:
\begin{enumerate}
\item $c_{(0)} > 0$.
\item $c$ has finite support, that is, 
\[
|\{\, \mu \in \Partition{m} \mid c_\mu \neq 0 \,\}| < \infty.
\]
\item The function
\[
F := \sum_{\mu \in \Partition{m}} c_\mu Z_\mu
\]
on $\Range{m}{n}$ is non-negative.
\end{enumerate}
We put 
\begin{align*}
\mathcal{T}_{c}^{+} := \{ \mu \in \Partition{m} \mid c_\mu > 0 \}, \\
\mathcal{T}_{c}^{-} := \{ \mu \in \Partition{m} \mid c_\mu < 0 \}.
\end{align*}
Then the following holds:
\begin{enumerate}
\item \label{item:LP:bounds} Let $X$ be a $\mathcal{T}_c^+$-design on $\G{m}{n}$.
Then $|X| \geq F(1,\dots,1)/c_{(0)}$.
\item \label{item:LP:eq_condition}
Let $X$ be a non-empty finite subset of $\G{m}{n}$.
If $X$ satisfies any two conditions in the following three conditions, then $X$ also satisfies the rest one:
\begin{description}
\item[Condition A:] $X$ is a $\mathcal{T}_c^+$-design on $\G{m}{n}$
\item[Condition B:] $X$ is a $\mathcal{T}_{c}^-$-design on $\G{m}{n}$ and \[
F(y(a,b)) = 0 \quad \text{for any distinct } a,b \in X.
\]
\item[Condition C:] $|X| = F(1,\dots,1)/c_{(0)}$.
\end{description}
\end{enumerate}
\end{prop}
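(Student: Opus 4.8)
The plan is to double count the quantity
\[
\Sigma := \sum_{a,b \in X} F(y(a,b))
\]
in two ways and compare. Everything is governed by the numbers $S_\mu := \sum_{a,b \in X} Z_\mu(y(a,b))$, which are all non-negative by Fact~\ref{fact:positive_Z}, and which vanish exactly along the relevant design conditions by Proposition~\ref{prop:eq_design_Z}. Note also that $(0) \in \mathcal{T}_c^+$ and $(0) \notin \mathcal{T}_c^-$ because $c_{(0)} > 0$.

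For the first evaluation I would insert $F = \sum_\mu c_\mu Z_\mu$ and interchange the (finite) sums, obtaining $\Sigma = \sum_\mu c_\mu S_\mu$. The term $\mu = (0)$ is special: since $H_{(0)}$ consists of the constant functions (Example~\ref{ex:const}) and $Z_{(0)}(1,\dots,1) = \dim H_{(0)} = 1$ by~\eqref{eq:normalize_zonal}, the zonal polynomial $Z_{(0)}$ is identically $1$, whence $S_{(0)} = |X|^2$. Grouping the remaining terms by the sign of $c_\mu$ gives $\Sigma = c_{(0)} |X|^2 + P + N$, where
\[
P := \sum_{\mu \in \mathcal{T}_{c}^{+} \setminus \{(0)\}} c_\mu S_\mu \ge 0, \qquad N := \sum_{\mu \in \mathcal{T}_{c}^{-}} c_\mu S_\mu \le 0 .
\]
For the second evaluation I split $\Sigma$ into diagonal and off-diagonal parts: by Observation~\ref{obs:y=111or000} we have $y(a,a) = (1,\dots,1)$, so the diagonal contributes $|X| F(1,\dots,1)$, while the off-diagonal part $D := \sum_{a \ne b} F(y(a,b))$ is non-negative because $F \ge 0$ on $\Range{m}{n}$. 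Equating the two evaluations yields the master identity
\[
c_{(0)} |X|^2 - |X| F(1,\dots,1) = D - P - N .
\]

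For part~\eqref{item:LP:bounds}, Condition~A says $X$ is a $\mathcal{T}_c^+$-design, so Proposition~\ref{prop:eq_design_Z} forces $S_\mu = 0$ for all $\mu \in \mathcal{T}_c^+ \setminus \{(0)\}$, i.e.~$P = 0$. Since $D \ge 0$ and $N \le 0$, the right-hand side of the master identity is non-negative, so $c_{(0)} |X|^2 \ge |X| F(1,\dots,1)$; dividing by $|X|\,c_{(0)} > 0$ gives $|X| \ge F(1,\dots,1)/c_{(0)}$.

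For part~\eqref{item:LP:eq_condition} I would translate the three conditions into sign-controlled equalities. By Proposition~\ref{prop:eq_design_Z}, Condition~A is equivalent to $P = 0$; the design half of Condition~B to $N = 0$ and its vanishing half to $D = 0$; and Condition~C to $L := c_{(0)}|X|^2 - |X| F(1,\dots,1) = 0$ (using $|X| > 0$). The master identity now reads $L = D - N - P$ with $D \ge 0$, $-N \ge 0$ and $P \ge 0$, so each implication is immediate: if A and B hold then $P = N = D = 0$ gives $L = 0$; if A and C hold then $P = 0$ and $L = 0$ force $D - N = 0$, and two non-negative numbers with zero sum must both vanish, giving $D = N = 0$; if B and C hold then $D = N = 0$ and $L = 0$ force $P = 0$. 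The computation is routine; the only real care needed is this sign bookkeeping --- that the positive-coefficient block $P$, the negative-coefficient block $N$, and the off-diagonal block $D$ each carry a definite sign --- together with the identification $Z_{(0)} \equiv 1$, which is what makes $c_{(0)}|X|^2$ the leading term of $\Sigma$.
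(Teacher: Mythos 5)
Your proof is correct and follows essentially the same route as the paper: both arguments double-count $\sum_{a,b \in X} F(y(a,b))$ (diagonal versus expansion in zonal polynomials, with $Z_{(0)} \equiv 1$ making $c_{(0)}|X|^2$ the leading term) and then settle both parts by sign bookkeeping on the positive, negative, and off-diagonal blocks via Fact~\ref{fact:positive_Z} and Proposition~\ref{prop:eq_design_Z}. The only cosmetic difference is that you retain the off-diagonal sum $D$ as an explicit slack term in an exact identity, whereas the paper folds it into the inequality \eqref{eq:LP_proof} together with its equality condition --- the content is identical.
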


\begin{proof}
Let $X$ be a non-empty finite subset of $\G{m}{n}$.
Since $F$ is non-negative, we have 
\[
\sum_{a,b \in X} F(y(a,b)) \geq \sum_{a \in X} F(y(a,a)) = |X| \cdot F(1,\dots,1).
\]
On the other hand, by the definition of $F$, we have
\begin{align*}
\sum_{a,b \in X} F(y(a,b)) = 
c_{(0)}|X|^2 +\sum_{a,b \in X} \sum_{\mu \neq (0)} c_{\mu} Z_\mu(y(a,b)).
\end{align*}
Therefore, we obtain 
\begin{align}
\sum_{\mu \in \mathcal{T}_c^+ \cup \mathcal{T}_c^- \setminus \{(0)\}} c_{\mu} \sum_{a,b \in X}Z_\mu(y(a,b)) \geq |X| (F(1,\dots,1) - c_{(0)}|X|) \label{eq:LP_proof}
\end{align}
and the equality holds if and only if 
\[
F(y(a,b)) = 0 \quad \text{for any distinct } a,b \in X.
\]

To prove Proposition \ref{prop:LP-bounds} \eqref{item:LP:bounds}, 
let us suppose that $X$ is a $\mathcal{T}_c^+$-design on $\G{m}{n}$.
Then by Proposition \ref{prop:eq_design_Z}, we have
\[
\sum_{a,b \in X} Z_\mu(y(a,b)) = 0 \quad \text{for any } \mu \in \mathcal{T}_c^+ \setminus \{(0)\}.
\]
Therefore, by combining this with \eqref{eq:LP_proof}, we have 
\begin{align}
\sum_{\mu \in \mathcal{T}_c^- \setminus \{0\}} c_{\mu} 
\sum_{a,b \in X}Z_\mu(y(a,b)) \geq |X| (F(1,\dots,1) - c_{(0)}|X|). \label{eq:ineqLP}
\end{align}
Here, by Fact \ref{fact:positive_Z}, the left hand side of \eqref{eq:ineqLP} is smaller than or equals to $0$,
and hence, we have 
\[
|X| \geq \frac{F(1,\dots,1)}{c_{(0)}}.
\]

We prove Proposition \ref{prop:LP-bounds} \eqref{item:LP:eq_condition} as follows:
\begin{description}
\item[If $A$ and $B$, then $C$:]
Suppose $X$ is a $\mathcal{T}_{c}^+ \cup \mathcal{T}_c^-$-design on $\G{m}{n}$ with 
\[
F(y(a,b)) = 0 \quad \text{for any distinct } a,b \in X.
\]
Then by \eqref{eq:LP_proof} and Proposition \ref{prop:eq_design_Z}, we have
\[
0 = |X| (F(1,\dots,1) - c_{(0)}|X|). 
\]
Since $X$ is non-empty, $|X| = F(1,\dots,1)/c_{(0)}$.
\item[If $B$ and $C$ then $A$:]
Suppose that $X$ is a $\mathcal{T}_c^-$-design on $\G{m}{n}$ with $|X| = F(1,\dots,1)/c_{(0)}$ and 
\[
F(y(a,b)) = 0 \quad \text{for any distinct } a,b \in X.
\]
Then by \eqref{eq:LP_proof} and Proposition \ref{prop:eq_design_Z}, we have
\[
\sum_{\mu \in \mathcal{T}_c^+ \setminus \{(0)\}} c_{\mu} 
\sum_{a,b \in X}Z_\mu(y(a,b)) = 0.
\]
By Fact \ref{fact:positive_Z}
and the positivity of $c_{\mu}$ for $\mu\in \mathcal{T}_c^+ \setminus \{(0)\}$,
we obtain
\[
\sum_{a,b \in X}Z_\mu(y(a,b)) = 0 \quad \text{for any } \mu \in \mathcal{T}_c^+ \setminus \{(0)\}.
\]
Thus, by Proposition \ref{prop:eq_design_Z}, 
$X$ is a $\mathcal{T}_c^+$-design on $\G{m}{n}$. 
\item[If $C$ and $A$ then $B$:] 
Suppose that $X$ is a $\mathcal{T}_{c}^+$-design on $\G{m}{n}$ with $|X| = \frac{F(1,\dots,1)}{c_{(0)}}$.
Then by \eqref{eq:LP_proof} and Proposition \ref{prop:eq_design_Z}, 
we have 
\[
\sum_{\mu \in \mathcal{T}_c^- \setminus \{(0)\}} c_{\mu} 
\sum_{a,b \in X}Z_\mu(y(a,b)) \ge 0.
\]
By Fact \ref{fact:positive_Z} and the negativity of $c_{\mu}$ for $\mu\in \mathcal{T}_c^- \setminus \{(0)\}$, 
we obtain
\[
\sum_{a,b \in X}Z_\mu(y(a,b)) = 0 \quad \text{for any } \mu \in \mathcal{T}_c^- \setminus \{(0)\}.
\]
Thus, by Proposition \ref{prop:eq_design_Z},
$X$ is a $\mathcal{T}_c^-$-design on $\G{m}{n}$. 
\end{description}
\end{proof}

\begin{rem}
Proposition \ref{prop:LP-bounds} is generalized to 
a theory for designs on general finite measure spaces in Okuda--Sawa \cite{OkudaSawa2013}.
\end{rem}

\subsection{Proof of Proposition \ref{prop:tightE}}

To apply Proposition \ref{prop:LP-bounds} to $\Ades$-designs, 
let us define an non-negative function $F^{\Ades}$ on $\Range{m}{n}$
by  
\begin{align*}
F^{\Ades} : \Range{m}{n} \rightarrow \mathbb{R}_{\geq 0},\quad (y_1,\dots,y_m) \mapsto 
\prod_{i=1}^{m} y_i.
\end{align*}
Then the following holds:

\begin{lem}\label{lem:F_E_can_be_written}
The function $F^{\Ades}$ can be written by 
\begin{align*}
F^{\Ades} = c^{\Ades}_{(0)} + \sum_{j=1}^m c^{\Ades}_{(1^j)} Z_{(1^j)} 
\end{align*}
with real-valued coefficients $c^\Ades_{\mu} \in \mathbb{R}$, $\mu \in \Ades$ such that  
\begin{enumerate}
\item $c^\Ades_{(0)} = 1/\binom{n}{m}$.
In particular, $F^{\Ades}(1,\dots,1)/c^{\Ades}_{(0)} = \binom{n}{m}$.
\item $c^{\Ades}_{(1^j)} > 0$ for any $j = 1,\dots,m$.
\end{enumerate}
\end{lem}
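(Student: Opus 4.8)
The plan is to recognize the node function $F^{\Ades}(y_1,\dots,y_m)=\prod_{i=1}^m y_i$ as a normalized Schur polynomial, so that the expansion of Proposition~\ref{prop:X=sumZ} can be quoted verbatim. First I would observe that $\prod_{i=1}^m y_i = \fre_m(y_1,\dots,y_m)$, the $m$-th elementary symmetric polynomial. By the Giambelli identity in Fact~\ref{fact:schur<->symmetric_poly} we have $X_{(1^m)}=\fre_m$, and since $\fre_m(1,\dots,1)=1$ this Schur polynomial is already normalized. Hence $F^{\Ades}=X_{(1^m)}=X^\ast_{(1^m)}$, and in particular $F^{\Ades}(1,\dots,1)=1$.

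Next I would substitute $i=m$ into Proposition~\ref{prop:X=sumZ}. Since $\binom{m-j}{m-j}=1$, this directly produces the claimed shape
\[
F^{\Ades}=X^\ast_{(1^m)}=\sum_{j=0}^m c^{\Ades}_{(1^j)}\,Z_{(1^j)},\qquad
c^{\Ades}_{(1^j)}=\frac{n+1}{n-j+1}\,\frac{\binom{n-m}{j}}{\binom{n-j}{m}\binom{n+1}{j}^2}.
\]
The $j=0$ term is exactly the constant term of the lemma, because $Z_{(1^0)}=Z_{(0)}$ is the constant function $1$: indeed $H_{(0)}$ consists of constants (Example~\ref{ex:const}) with $\dim H_{(0)}=1$, so the normalization \eqref{eq:normalize_zonal} forces $Z_{(0)}\equiv 1$. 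Thus reading off the $j=0$ coefficient gives $c^{\Ades}_{(0)}=\frac{n+1}{n+1}\cdot\frac{1}{\binom{n}{m}}=1/\binom{n}{m}$, which is statement (i); combined with $F^{\Ades}(1,\dots,1)=1$ it yields the quotient $F^{\Ades}(1,\dots,1)/c^{\Ades}_{(0)}=\binom{n}{m}$.

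It remains to verify the positivity in (ii), which is where I would use the standing assumption $n\ge 2m$ recorded in Section~\ref{subsection:notations_CG}. For $1\le j\le m$ we then have $n-m\ge m\ge j$, so $\binom{n-m}{j}>0$; moreover $n-j\ge n-m\ge m$ gives $\binom{n-j}{m}>0$, while $n-j+1>0$ and $\binom{n+1}{j}^2>0$ are immediate. Hence every factor of $c^{\Ades}_{(1^j)}$ is strictly positive, establishing (ii). The only genuine content of the argument is the identification $F^{\Ades}=X^\ast_{(1^m)}$ in the first step; once that is in place, Proposition~\ref{prop:X=sumZ} supplies the coefficients explicitly and the positivity is a routine consequence of $n\ge 2m$, so I do not anticipate any serious obstacle.
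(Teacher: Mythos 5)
Your proposal is correct and follows exactly the paper's route: the paper's own proof is the one-liner ``Note that $F^{\Ades} = X^\ast_{(1^m)}$; thus our claim is in Proposition~\ref{prop:X=sumZ}'', which is precisely your identification of $\prod_{i=1}^m y_i$ with the normalized Schur polynomial $X^\ast_{(1^m)}$ followed by specializing that proposition at $i=m$. You merely make explicit what the paper leaves implicit---the coefficient $c^{\Ades}_{(0)}=1/\binom{n}{m}$ from the $j=0$ term with $Z_{(0)}\equiv 1$, and the positivity of $c^{\Ades}_{(1^j)}$ via the standing assumption $n\ge 2m$---both of which check out.
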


\begin{proof}
Note that $F^{\Ades} = X^\ast_{(1^i)}$.
Thus, our claim is in Proposition \ref{prop:X=sumZ}.
\end{proof}

We are ready to prove Proposition \ref{prop:tightE}:

\begin{proof}[Proof of Proposition \ref{prop:tightE}]
We extend $c^\Ades$, which is defined in Lemma \ref{lem:F_E_can_be_written}, to a function on $\Partition{m}$ 
by putting $c^\Ades_{\mu} = 0$ if $\mu \not \in \Ades$. 
Then, our first claim is proved 
by applying Proposition~\ref{prop:LP-bounds} \eqref{item:LP:bounds}
for $c = c^{\Ades}$.
Note that $\mathcal{T}_{c^\Ades}^+ = \Ades$ and $\mathcal{T}^{-}_{c^\Ades} = \emptyset$.
In order to prove the rest claims, 
let us fix a finite subset $S$ with $|S| = \binom{n}{m}$.
If $S$ is an $\Ades$-design,
then $S$ satisfies Condition A and Condition C in
Proposition~\ref{prop:LP-bounds} \eqref{item:LP:eq_condition} for $c = c^{\Ades}$.
This implies that 
\[
F^\Ades(y(a,b)) = 0 \quad \text{for any distinct } a,b \in S,
\]
and hence 
\begin{align}
y_m(a,b) = 0 \quad \text{for any distinct } a,b \in S, \label{eq:ymzero}
\end{align}
in this case.
Conversely, suppose that $S$ satisfies \eqref{eq:ymzero}.
Then $S$ satisfies Condition B and Condition C in
Proposition~\ref{prop:LP-bounds} for $c = c^\Ades$.
This implies that $S$ is an $\Ades$-design.
\end{proof}

\subsection{Proof of Proposition \ref{prop:tightEF}}

To apply Proposition \ref{prop:LP-bounds} to $\Ades \cup \Adesh$-designs, 
let us define an non-negative function $F^{\Adesh}$ on $\Range{m}{n}$ by 
\begin{align*}
F^{\Adesh} : \Range{m}{n} &\rightarrow \mathbb{R}_{\geq 0},\quad \\
(y_1,\dots,y_m) &\mapsto 
\binom{n-2}{m-1} (\prod_{i=1}^m y_i) (\sum_{i=1}^m y_i) + \sum_{i=1}^m y_i(1-y_i).
\end{align*}

\begin{lem}\label{lem:F_can_be_written}
The function $F^{\Adesh}$ can be written by 
\begin{align*}
F^{\Adesh} = c^{\Adesh}_{(0)}
+ \sum_{j=1}^m c^{\Adesh}_{(1^j)} Z_{(1^j)} + \sum_{j=2}^{m} c^{\Adesh}_{(2,1^{j-1})} Z_{(2,1^{j-1})} 
\end{align*}
with real-valued coefficients $c^\Adesh_{\mu} \in \mathbb{R}$, $\mu \in \Adesh$ such that  
\begin{enumerate}
\item $c^{\Adesh}_{(0)} = m \binom{n-2}{m-1}/\binom{n}{m}$. 
In particular, $F^{\Adesh}(1,\dots,1)/c^{\Adesh}_{(0)} = \binom{n}{m}$.
\item $c^{\Adesh}_{(2,1^{j-1})} > 0$ for any $j = 2,\dots,m$. 
\end{enumerate}
\end{lem}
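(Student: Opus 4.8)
The plan is to rewrite $F^{\Adesh}$ first in the basis of normalized Schur polynomials, then to invert into the zonal basis, and finally to extract the constant term and the signs.

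First I would express the two summands of $F^{\Adesh}$ through Schur polynomials. Since $\left(\prod_{i=1}^m y_i\right)\left(\sum_{i=1}^m y_i\right)=\fre_m\fre_1$, the identity $X_{(2,1^{i-1})}=\fre_i\fre_1-\fre_{i+1}$ established in the proof of Proposition~\ref{prop:zonal_poly_(2111)}, together with $\fre_{m+1}=0$, gives $\fre_m\fre_1=X_{(2,1^{m-1})}=mX^\ast_{(2,1^{m-1})}$. Likewise $\sum_i y_i^2=\frh_2-\fre_2=X_{(2)}-X_{(1^2)}$ by Fact~\ref{fact:schur<->symmetric_poly}, so that $\sum_i y_i(1-y_i)=\fre_1-(X_{(2)}-X_{(1^2)})$. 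After normalizing ($X_{(2)}=\binom{m+1}{2}X^\ast_{(2)}$, $X_{(1^2)}=\binom{m}{2}X^\ast_{(1^2)}$, $\fre_1=mX^\ast_{(1)}$) I obtain
\[
F^{\Adesh}=m\binom{n-2}{m-1}X^\ast_{(2,1^{m-1})}+mX^\ast_{(1)}-\binom{m+1}{2}X^\ast_{(2)}+\binom{m}{2}X^\ast_{(1^2)},
\]
so every monomial occurring is indexed by a partition $\sigma\le(2,1^{m-1})$.

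By the James--Constantine expansion each $Z_\sigma$ with $\sigma\le(2,1^{m-1})$ is a combination of the $X^\ast_\kappa$ with $\kappa\le\sigma$, with nonzero leading coefficient at $\kappa=\sigma$ (nonvanishing since $n\ge 2m$); hence $\{Z_{(1^k)}\}_{k=0}^m\cup\{Z_{(2,1^{k-1})}\}_{k=1}^m$ is a triangular, and thus invertible, basis of the span of these $X^\ast_\sigma$. Consequently $F^{\Adesh}$ has a unique expansion $\sum_{\sigma\le(2,1^{m-1})}c_\sigma Z_\sigma$, and the lemma reduces to three points: that the coefficient of $Z_{(2)}=Z_{(2,1^0)}$ vanishes (so only indices in $\Ades\cup\Adesh\cup\{(0)\}$ appear), that $c_{(0)}$ has the stated value, and that $c_{(2,1^{j-1})}>0$. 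For the constant term I would use that $Z_{(0)}\equiv 1$ while $\int_{\G{m}{n}}Z_\mu\,d\nu=0$ for $\mu\ne(0)$ (as $Z_{\mu,a}\in H_\mu\perp H_{(0)}$), whence $c_{(0)}=\int_{\G{m}{n}}F^{\Adesh}\,d\nu$. The mean values $\int X^\ast_\mu\,d\nu=[m]_\mu/[n]_\mu$ (consistent with Proposition~\ref{prop:X=sumZ} for $\mu=(1^k)$, where they reduce to $\binom{m}{k}/\binom{n}{k}$) give
\[
\int_{\G{m}{n}}\left(\prod_{i=1}^m y_i\right)\left(\sum_{i=1}^m y_i\right)d\nu=\frac{m(m+1)}{(n+1)\binom{n}{m}},\qquad \int_{\G{m}{n}}\sum_{i=1}^m y_i(1-y_i)\,d\nu=\frac{m^2(n-m)^2}{n(n^2-1)},
\]
and combining them collapses, using $\binom{n-2}{m-1}/\binom{n}{m}=m(n-m)/(n(n-1))$, to $c_{(0)}=m^2(n-m)/(n(n-1))=m\binom{n-2}{m-1}/\binom{n}{m}$; since $F^{\Adesh}(1,\dots,1)=m\binom{n-2}{m-1}$, this yields $F^{\Adesh}(1,\dots,1)/c_{(0)}=\binom{n}{m}$.

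The heart of the proof is the $(2,1^\bullet)$-sector. Matching the coefficients of $X^\ast_{(2,1^{k-1})}$ via Proposition~\ref{prop:zonal_poly_(2111)} gives a triangular system: because $Z_{(2,1^{j-1})}$ touches $X^\ast_{(2,1^{k-1})}$ only for $k\le j$, the equations $k=m,m-1,\dots,2$ (with right-hand sides $m\binom{n-2}{m-1}$ and then $0$) determine $c_{(2,1^{j-1})}$ for $j=m,\dots,2$ recursively, without ever involving $Z_{(2)}$. The coefficient $c_{(2)}$ is then pinned down by the single remaining equation, the one for $X^\ast_{(2)}$; the crux is to verify, using the binomial identities of Lemma~\ref{lem:binom_formula}, that the contribution of the already-determined $c_{(2,1^{j-1})}$ $(j\ge2)$ to the $X^\ast_{(2)}$-coefficient equals exactly the value $-\binom{m+1}{2}$ demanded by $F^{\Adesh}$, forcing $c_{(2)}=0$ — this consistency is precisely what dictates the normalization $\binom{n-2}{m-1}$. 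The explicit solution must then be simplified (again via Lemma~\ref{lem:binom_formula}, cancelling the alternating signs) into a manifestly positive closed form to establish $c_{(2,1^{j-1})}>0$ from $n\ge 2m$, after which the residue $F^{\Adesh}-\sum_{j\ge2}c_{(2,1^{j-1})}Z_{(2,1^{j-1})}$ is a combination of the $X^\ast_{(1^k)}$ alone and is expanded through Proposition~\ref{prop:X=sumZ} to read off the $c_{(1^j)}$. I expect the $c_{(2)}=0$ consistency and the positivity to be the main obstacles, both being genuine binomial-identity computations rather than formal consequences; an alternative that avoids the downward recursion is to guess closed forms for all $c_\mu$ and verify $\sum_\mu c_\mu Z_\mu=F^{\Adesh}$ coefficientwise, in the style of the proof of Proposition~\ref{prop:X=sumZ}.
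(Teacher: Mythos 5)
Your reduction of $F^{\Adesh}$ to the Schur basis is correct and is a genuinely different starting point from the paper: using $\fre_{m+1}=0$ to collapse $\bigl(\prod_i y_i\bigr)\bigl(\sum_i y_i\bigr)=\fre_m\fre_1=m\,X^\ast_{(2,1^{m-1})}$ is a neat observation, your constant-term computation via $\int X^\ast_\mu\,d\nu=[m]_\mu/[n]_\mu$ checks out exactly (both integrals you state are correct, and they do collapse to $c^{\Adesh}_{(0)}=m^2(n-m)/(n(n-1))$), and the triangularity argument legitimately establishes existence and uniqueness of an expansion over $\sigma\le(2,1^{m-1})$. The paper instead keeps the product structure: it writes $\bigl(\prod_i y_i\bigr)\bigl(\sum_i y_i\bigr)=m\,X^\ast_{(1^m)}X^\ast_{(1)}$, expands both factors in zonal polynomials via Proposition~\ref{prop:X=sumZ} (with positive coefficients $d^{(m)}_j$, $d^{(1)}_1$), and then applies the linearization formula of Lemma~\ref{lem:Z(1)Z(1^i)}, $Z_{(1)}\cdot Z_{(1^j)}=a_jZ_{(2,1^{j-1})}+\cdots$, which you never invoke.

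This difference is where your attempt has a genuine gap. The two assertions carrying the lemma's content --- that no $Z_{(2)}$ term appears (the sum starts at $j=2$, which is essential for $\mathcal{T}^+_{c^{\Adesh}}\subset\Ades\cup\Adesh$ in the proof of Proposition~\ref{prop:tightEF}) and that $c^{\Adesh}_{(2,1^{j-1})}>0$ --- are in your plan ``pinned down'' by a downward triangular recursion whose solution you do not produce; you yourself flag both the $c_{(2)}=0$ consistency and the positivity as unexecuted ``genuine binomial-identity computations.'' In your route these are hard: inverting a triangular matrix with alternating-sign entries (Proposition~\ref{prop:zonal_poly_(2111)}) yields sums whose sign is not evident, so positivity is not a formal consequence of your setup. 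The paper's route dissolves both points at once: since only the products $Z_{(1)}\cdot Z_{(1^j)}$ with $j\ge 2$ produce $Z_{(2,1^{j-1})}$ with $j\ge 2$, one reads off directly
\[
c^{\Adesh}_{(2,1^{j-1})}=m\binom{n-2}{m-1}\,d^{(m)}_j\,d^{(1)}_1\,a_j>0
\qquad (j=2,\dots,m),
\]
with positivity inherited from Lemma~\ref{lem:Z(1)Z(1^i)}, while $c^{\Adesh}_{(2)}=0$ is a single cancellation between the $j=1$ linearization term $m\binom{n-2}{m-1}d^{(m)}_1d^{(1)}_1a_1Z_{(2)}$ and the $Z_{(2)}$-contribution of $-\binom{m+1}{2}X^\ast_{(2)}$ (via Proposition~\ref{prop:zonal_poly_(2111)} for $i=1$); your intuition that this cancellation ``dictates the normalization $\binom{n-2}{m-1}$'' is exactly right, but in your framework it remains an unverified identity rather than a one-line check. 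Your closing alternative --- guessing closed forms and verifying coefficientwise --- would in effect reconstruct the paper's argument; to close the gap as written, you must actually solve your recursion and prove the two identities you postponed.
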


\begin{proof}
Let $d^{(i)}_j$ be the coefficient of $Z_{(1^j)}$ in \eqref{eq:X=sumZ}.
By the definition of the Schur polynomials,
we obtain $\prod^m_{i=1} y_i=X^\ast_{(1^m)}$, $\sum^m_{i=1} y_i=m X^\ast_{(1)}$ and $\sum^m_{i=1} y_i^2=\binom{m+1}{2}X^\ast_{(2)}-\binom{m}{2}X^\ast_{(1,1)}$.
Using Proposition~\ref{prop:X=sumZ}, Proposition~\ref{prop:zonal_poly_(2111)}
for $i=1$ and Lemma~\ref{lem:Z(1)Z(1^i)},
we have that $F^{\Adesh}$ is written by a linear combination of
$\{Z_{(1^j)}\}^m_{j=0}$ and $\{Z_{(2,1^{j-1})}\}^m_{j=1}$
with real-valued coefficients.
In particular 
we can check that the coefficients of $Z_{(0)}$, $Z_{(2)}$
and $Z_{(2,1^{j-1})}$
are $c^{\Adesh}_{(0)} = m \binom{n-2}{m-1}/\binom{n}{m}=\frac{m^2 (n-m)}{n (n-1)}$,
$c^{\Adesh}_{(2)}=0$ and $c^{\Adesh}_{(2,1^{j-1})}=d^{(m)}_j d^{(1)}_1 a_j m \binom{n-2}{m-1}>0$, where $a_j$ is the positive number which appears in Lemma~\ref{lem:Z(1)Z(1^i)},
respectively.
\end{proof}

\begin{proof}[Proof of Proposition \ref{prop:tightEF}]
We extend $c^\Adesh$, which is defined in Lemma \ref{lem:F_can_be_written},
to a function on $\Partition{m}$ 
by putting $c^{\Adesh}_{\mu} = 0$ if $\mu \not \in \Ades \cup \Adesh$. 
By Proposition \ref{prop:LP-bounds} \eqref{item:LP:eq_condition}, 
for a finite subset $S$ of $\G{m}{n}$ with $|S| = \binom{n}{m}$ the following conditions on $S$ are equivalent:
\begin{enumerate}
\item $S$ is a $\mathcal{T}_{c^\Adesh}^+$-design on $\G{m}{n}$.
\item $S$ is a $\mathcal{T}_{c^\Adesh}^-$-design with 
\begin{align}
F^\Adesh(y(a,b)) = 0 \quad \text{for any distinct } a,b \in S \label{eq:F^F=0}
\end{align}
\end{enumerate}
Here, by Lemma \ref{lem:F_can_be_written}, 
we have 
\begin{align*}
\Adesh \subset \mathcal{T}_{c^\Adesh}^+ \subset \Ades \cup \Adesh, \quad \mathcal{T}_{c^\Adesh}^- \subset \Ades.
\end{align*}
Let us assume that $S$ is a $\Ades \cup \Adesh$-design.
Then $S$ is also a $\mathcal{T}_{c^\Adesh}^+$-design.
By Observation~\ref{obs:y=111or000} \eqref{obs:y=111},
$y(a,a) = (1,\dots,1)$ for any $a \in \G{m}{n}$.
Therefore, $S$ satisfies \eqref{eq:F^F=0} and hence,
\begin{align}
y_i(a,b) = 0 \text{ or } 1 \quad \text{ for any } a,b \in S, \label{eq:y=01}
\end{align}
Conversely,
suppose that $S$ satisfies \eqref{eq:y=01}.
Then by Proposition \ref{prop:tightE}, 
the set $S$ is an $\Ades$-design on $\G{m}{n}$.
Hence, $S$ is a $\mathcal{T}_{c^\Adesh}^-$-design with \eqref{eq:F^F=0}.
Therefore, $S$ is also a $\mathcal{T}_{c^\Adesh}^+$-design.
This implies that $S$ is a $\Ades \cup \Adesh$-design on $\G{m}{n}$.
\end{proof}

\appendix
\section{Lower bounds for $1$-designs on $\G{m}{n}$}
\label{sec:appendix}

Let $t$ be a non-negative integer.
As an analogy of
$t$-designs on rank one symmetric spaces,
the concept of $t$-designs on $\G{m}{n}$ was introduced by
Roy~\cite{Roy2009bca}.
Remark that a $t$-design on $\G{m}{n}$ in terms of Roy's definition
is translated as a $\mathcal{T}_t$-design on $\G{m}{n}$,
where $\mathcal{T}_t:=\set{\mu\in \Partition{m}}
{\sum^m_{i=1}\mu_i\le t}$.
\begin{rem}
A great antipodal set $S$ is a $1$-design on $\G{m}{n}$
by Theorem~\ref{Mainthm:great_anti_to_E_design}.
However $S$ can not be a $2$-design in general.
In fact, we can check that $\sum_{a,b\in S} Z _{(2)}(y(a,b))\neq 0$
by Remark~\ref{rem:Z_i},
that is, $S$ is not a $\{(2)\}$-design.
\end{rem}

Roy~\cite[Lemma 9]{Roy2009bca} gave
the lower bound for the cardinalities of 
$t$-designs on $\G{m}{n}$ as follows:
any $t$-design $X$ satisfies
\[
|X| \ge \sum_{\mu\in \mathcal{T}_{\lfloor t/2\rfloor}}\dim H_\mu .
\]
Note that
since
$\sum_{\mu\in \mathcal{T}_{\lfloor 1/2\rfloor}}\dim H_\mu=
\dim H_{(0)}=1$ holds,
the above bound for 1-designs becomes trivial.
In this section, we give a sharper lower bound for 1-designs on $\G{m}{n}$ than the above bound.
In particular, we determine $1$-designs on $\G{m}{n}$
with the smallest cardinalities when $n$ is divided by
$m$.
Essentially,
the proof of the following theorem due to Theorem 9 in
Roy~\cite{Roy2009bca}.

\begin{thm}
\label{thm:Roy_1_design}
Let $X$ be a $1$-design on $\G{m}{n}$.
Then the inequality
\[
|X| \ge \frac{n}{m}.
\]
holds.
Furthermore equality attained if and only if
$m | n$ and
$X$ is consist of $m$-dimensional subspaces
$\{x_1,x_2,\ldots ,x_{n/m}\}$ such that
\[
\C^n=x_1 + x_2 +\cdots +x_{n/m}
\ 
\text{and $x_i \perp x_j$ if $i\neq j$.}
\]
\end{thm}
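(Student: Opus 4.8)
The plan is to derive both the bound and its equality case from the linear programming machinery of Proposition~\ref{prop:LP-bounds}, applied to the simplest possible test function. First I would record that a $1$-design in the sense of Roy is exactly a $\mathcal{T}_1$-design with $\mathcal{T}_1=\{(0),(1)\}$, and hence, by Observation~\ref{obs:designs}, the same thing as a $\{(1)\}$-design. So it suffices to feed Proposition~\ref{prop:LP-bounds} a function $F$ whose positive-coefficient index set $\mathcal{T}_c^{+}$ is contained in $\{(0),(1)\}$, with nothing in $\mathcal{T}_c^{-}$.

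The natural choice is
\[
F:=\sum_{i=1}^{m}y_i=m\,X^\ast_{(1)}\colon \Range{m}{n}\to\R_{\ge 0},
\]
which is visibly non-negative and vanishes precisely at $y=(0,\dots,0)$. Using Proposition~\ref{prop:X=sumZ} with $i=1$ I would expand $X^\ast_{(1)}$, hence $F$, in the zonal basis as $F=c_{(0)}Z_{(0)}+c_{(1)}Z_{(1)}$, and read off $c_{(0)}=m^2/n>0$ together with $c_{(1)}=m(n-m)/\bigl(n(n-1)(n+1)\bigr)>0$. Thus $\mathcal{T}_c^{+}=\{(0),(1)\}$ and $\mathcal{T}_c^{-}=\emptyset$, so every $1$-design is a $\mathcal{T}_c^{+}$-design, and Proposition~\ref{prop:LP-bounds}\eqref{item:LP:bounds} yields $|X|\ge F(1,\dots,1)/c_{(0)}=m/(m^2/n)=n/m$.

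For the equality case I would invoke Proposition~\ref{prop:LP-bounds}\eqref{item:LP:eq_condition}. Since $\mathcal{T}_c^{-}=\emptyset$, Condition~B collapses to ``$F(y(a,b))=0$ for distinct $a,b\in X$'', which by non-negativity of $F$ and Observation~\ref{obs:y=111or000}\eqref{obs:y=000} is equivalent to $a\perp b$. In the forward direction, a $1$-design $X$ (Condition~A) with $|X|=n/m$ (Condition~C) must then satisfy Condition~B, i.e.\ its members are pairwise orthogonal $m$-subspaces; as $|X|$ is an integer equal to $n/m$ this already forces $m\mid n$, and the dimension count $\dim(\sum_i x_i)=(n/m)\cdot m=n$ shows the $x_i$ give an orthogonal direct-sum decomposition of $\C^n$. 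Conversely, any such decomposition $\{x_1,\dots,x_{n/m}\}$ satisfies Conditions~B and~C, so the implication ``B and C $\Rightarrow$ A'' shows it is a $1$-design attaining $|X|=n/m$.

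There is no deep obstacle here: the framework of Proposition~\ref{prop:LP-bounds} does the heavy lifting, and this is why the proof is ``essentially'' Roy's. The only genuine computation is expanding $X^\ast_{(1)}$ via Proposition~\ref{prop:X=sumZ} and verifying that the two coefficients $c_{(0)}$ and $c_{(1)}$ are positive; the only conceptual care is in the equality case, where the integrality $m\mid n$ and the spanning property must be \emph{extracted} from ``$|X|=n/m$ with pairwise orthogonal members'' rather than assumed.
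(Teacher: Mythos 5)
Your proposal is correct and follows essentially the same route as the paper's own proof: the paper likewise applies Proposition~\ref{prop:LP-bounds} to the test function $X^\ast_{(1)}$ (your $F=\sum_i y_i$ is just $mX^\ast_{(1)}$, a harmless positive rescaling), reads off the same two positive coefficients from Proposition~\ref{prop:X=sumZ}, and settles the equality case by noting that $X^\ast_{(1)}(y(a,b))=0$ forces all $y_i(a,b)=0$, hence $a\perp b$ by Observation~\ref{obs:y=111or000}~\eqref{obs:y=000}, with $m\mid n$ coming from integrality of $|X|$. Your explicit use of the implication ``B and C imply A'' for the converse spells out what the paper leaves implicit, but it is the same argument.
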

\begin{rem}
\label{rem:partition_Cn_AP}
By Observation~\ref{obs:y=111or000} \eqref{obs:y=000},
the above condition of $\{x_i\}^{n/m}_{i=1}$
is equivalent to 
$y(x_i,x_j)=(0,0,\ldots ,0)$ if $i\neq j$.
By Proposition~\ref{prop:equiv_antipodal_set},
$\{x_i\}^{n/m}_{i=1}$ is an antipodal set.
\end{rem}
\begin{proof}[Proof of Theorem~\ref{thm:Roy_1_design}]
Consider $X^\ast_{(1)}(y)=(\sum^m_{i=1}y_i)/m$.
From Proposition~\ref{prop:X=sumZ},
we obtain $X^\ast_{(1)}=\frac{m}{n}Z_{(0)}+
\frac{n-m}{n (n-1) (n+1)} Z_{(1)}$.
Let the $\R$-valued function $c$ on $\Partition{m}$
be $c_{(0)}=\frac{m}{n}$,
$c_{(1)}=\frac{n-m}{n (n-1) (n+1)}$
and $c_{\mu}=0$ otherwise.
Then $\mathcal{T}_1=\mathcal{T}^+_{c}$.
By Proposition~\ref{prop:LP-bounds},
for a 1-design $X$, we have $|X|\ge X^\ast_{(1)}(1,1,\ldots ,1)/c_{(0)}=n/m$
and that equality attained if and only if
$m | n$ and
$X^\ast_{(1)}(y(a,b))=0$ for $a,b\in X$ with $a\neq b$.
Since for each $a,b\in X$ and $i=1,2,\ldots ,m$,
$y_i(a,b)$ is non-negative,
$X^\ast_{(1)}(y(a,b))=0$ implies $y_i(a,b)=0$
for $i=1,2,\ldots ,m$.
By Observation~\ref{obs:y=111or000} \eqref{obs:y=000},
the desired result follows.
\end{proof}

\section{An $\Ades$-design with the smallest cardinality}
\label{sec:appendix2}

In this section, we give an example of an $\mathcal{E}$-design 
with the smallest cardinality which is not a great antipodal set.

Let $\{e_1,e_2,e_3,e_4\}$
denotes the standard orthonormal basis of $\C^4$.
We consider the following six spaces:
\[
x_1:=\C \vspan \{ e_1,e_2 \},\ 
x_2:=\C \vspan \{ e_3,e_4 \},
\]
\[
x_3:=\C \vspan \{ e_1,e_4 \},\ 
x_4:=\C \vspan \{ e_2,e_4 \},
\]
\[
x_5:=\C \vspan \{\, e_1+\sqrt{-1} e_2, e_3 \,\},\ 
x_6:=\C \vspan \{\, e_1-\sqrt{-1} e_2, e_3 \,\}
\]
and $X:=\{x_1,x_2,\ldots ,x_6\}$.
Then $X$ is a subset of $\G{2}{4}$ which is consist of six points.
On the other hand, we can check that 
the matrix whose the $(i,j)$-entry is $y(x_i,x_j)$
for $1\le i,j\le 6$
is 
\[
\left(
\begin{array}{cccccc}
  (1,1) & (0,0) & (1,0) & (1,0) & (1,0) & (1,0) \\ 
  (0,0) & (1,1) & (1,0) & (1,0) & (1,0) & (1,0) \\ 
  (1,0)  & (1,0) & (1,1)  & (1,0)  & (1/2,0) & (1/2,0) \\ 
  (1,0)  & (1,0) & (1,0)  & (1,1)  & (1/2,0) & (1/2,0) \\ 
  (1,0) & (1,0)  & (1/2,0) & (1/2,0)  & (1,1)  & (1,0)  \\ 
  (1,0) & (1,0)  & (1/2,0) & (1/2,0)  & (1,0)  & (1,1)  \\ 
\end{array}
\right).
\]
From the above matrix, we can see the following two facts.
The principal angles $y(x_3,x_5)$ 
between $x_3$ and $x_5$ coincides with
$(1/2,0)$, that is,
$X$ is not an antipodal set by Proposition~\ref{prop:equiv_antipodal_set}.
On the other hand,
any last principal angle $y_2(x_i,x_j)$ for $i\neq j$ 
is zero and $|X|=\binom{4}{2}=6$, that is,
$X$ is an $\mathcal{E}$-design 
by Proposition~\ref{prop:tightE}.
Thererfore $X$ is an example of a `tight' $\mathcal{E}$-design which is not a great antipodal set.

Remark that 
$\{x_1,x_2,x_3,x_4\}$ and $\{x_5,x_6\}$ are antipodal sets.
Hence $X$ is a disjoint union of two antipodal sets.

\noindent
\textbf{Acknowledgements.}
The authors would like to thank Makiko Sumi Tanaka and Hiroyuki Tasaki whose comments were of inestimable value for our study. 
We are also indebted to Kohei Suzuki for his careful reading of this paper.
The first author is supported by 
GCOE `Fostering top leaders in mathematics',
Kyoto University, and
the second author is supported by the fellowship of the Japan Society for
the Promotion of Science.

\quad\\
{\it Hirotake Kurihara}\\
	Research Institute for Mathematical Sciences, \\
	Kyoto University, \\
	Kyoto 606-8502,\\
	Japan\\
	\texttt{kurihara@kurims.kyoto-u.ac.jp}\\
\quad\\
{\it Takayuki Okuda}\\
	Graduate School of Mathematical Science,\\
	The University of Tokyo,\\
	3-8-1 Komaba,\\
	Meguro-ku,\\
	Tokyo 153-8914,\\
	Japan\\
	\texttt{okuda@ms.u-tokyo.ac.jp}

\end{document}